\sloppy \theoremstyle{plain}
\newtheorem*{thm*}{Théorème}
\newtheorem*{rem*}{Remarque}
\newtheorem*{prop*}{Proposition}
\newtheorem{lemme}{Lemme}[section]
\newtheorem{prop}[lemme]{Proposition}
\newtheorem{rem}[lemme]{Remarque}
\newtheorem{thm}[lemme]{Théorème}
\newtheorem{defn}[lemme]{Définition}
\newtheorem{cor}[lemme]{Corollaire}
\newtheorem*{conjecture*}{Conjecture}
\newtheorem*{theorem*}{Theorem}
\newcommand{\End}{\operatorname{End}}
\newcommand{\Hom}{\operatorname{Hom}}
\newcommand{\Ker}{\operatorname{Ker}}
\newcommand{\Irr}{\operatorname{Irr}}
\newcommand{\Ind}{\operatorname{Ind}}
\newcommand{\Rep}{\operatorname{Rep}}
\newcommand{\Res}{\operatorname{Res}}
\newcommand{\C}{\mathbb C}
\newcommand{\Z}{\mathbb Z}
\newcommand{\Gal}{\operatorname{Gal}}
\newcommand{\Cent}{\operatorname{Cent}}
\newcommand{\id}{\operatorname{Id}}
\newcommand{\nnn}{\operatorname{N}}
\newcommand{\stab}{\operatorname{Stab}}
\newcommand{\Stab}{\operatorname{Stab}}
\newcommand{\supp}{\operatorname{supp}}
\newcommand{\tr}{\operatorname{Tr}}
\begin{document}
\title{Correspondances de Howe pour les groupes des similitudes}

\date{\today}
\author{chun-hui wang }
\address{chun-hui wang. Université Paris-Sud, Bâtiment 425 91405 Orsay Cedex, France }
\email{chun-hui.wang@math.u-psud.fr}
\maketitle

\setcounter{tocdepth}{1}

\tableofcontents
\section*{Introduction}
Cet article  est pour but d'examiner comment on peut généraliser la correspondance de Howe (qui est déjà prouvée  pour un corps $p$-adique $F$ sauf si sa caractéristique résiduelle  vaut $2$ ) aux groupes des similitudes.\\

Certains aspects de la correspondance de Howe, pour des  groupes des similitudes, ont déjà été étudiés. 
Nous rappelons quelques résultats pour un corps global. En 1972, Shimizu.H a étudié les groupes $(GL_2, D^{\times})$ [cf. \cite{Shim}]
 et son résultat permet de réaliser la correspondance de Jacquet-Langlands en utilisant la correspondance de Howe.
 Puis, Waldspurger  a déterminé complètement la correspondance de Howe pour les groupes $(\widehat{SL_2}, PGL_2)$[cf. \cite{Wald2}]. 
Pour des résultats du m\^eme genre, nous renvoyons  le lecteur  à l'article de I.I.Piateski-Shapiro [cf. \cite{PS} ] 
qui traite   les groupes $(\widehat{SL_2}, PGSp_4)$, à l'article de M.Harris et S.Kudla [cf. \cite{HK}] 
pour les groupes $(GSp_2, GO_4)$, ou bien aux autres articles de la bibliographie.

Cependant, notre article sera consacré  à une étude analogue sur un corps $p$-adique. 
Dans la Thèse de L.Barthel [cf. \cite{Bar2}], elle a construit le groupe métaplectique $\widetilde{GSp}$,
 et a défini la représentation de Weil pour $\widetilde{GSp}$. 
En particulier, elle aussi mis en évidence  les difficultés à généraliser la correspondance de Howe aux groupes des similitudes. 
Une difficulté  est que pour une paire réductive duale $(H_1,H_2)$ de $GSp$, 
leurs images réciproques $(\widetilde{H_1}, \widetilde{H_2})$ dans $\widetilde{GSp}$ ne commutent pas en général.
 Ensuite dans \cite{Rob1}, B.Roberts a généralisé définitivement la correspondance de Howe pour les groupes $(GSp, GO)$. 
Ses résultats doivent se demander  que les représentations lisses irréductibles de $GSp$ (resp. $GO$) se restreignent à $Sp$ (resp. $O$) 
sans multiplicité. Ensuite W.T.Gan et S.Tantono ont fait leurs formes intérieures dans \cite{GT}, et ont aussi vérifié que les conditions 
de multiplicité $1$ mentionnées ci-dessus ne  étaient pas nécessaires. Gr\^ace à  une analyse attentive de leurs travaux, 
nous généralisons ces résultats à des paires plus nombreuses dans cet article.\\

Cet article consiste en deux parties. Dans les sections §\ref{leplusquotient} à §\ref{lesrepresentationdebigrapheforte}, 
nous donnons les théorèmes principaux pour les représentations  de graphe forte et de bigraphe forte. 
Dans les sections §\ref{howeduale} à §\ref{appendice2}, en utilisant les résultats principaux énoncés dans la première partie, 
nous généralisons la correspondance de Howe aux groupes des similitudes.

Dans cet article, nous utilisons librement les notions et notations de la théorie des représentations lisses de groupes 
localement compacts totalement discontinus [cf. \cite{BernZ}].  Nous aussi citons librement les résultats de la représntation de Weil 
sur un corps $p$-adique [cf. \cite{MVW} et \cite{Kud2}].\\

\smallskip
\noindent \textbf{Remerciements.}
Ce travail s'est accompli sous la direction de Guy Henniart.
Je tiens le remercier de m'avoir proposé ce sujet, et pour ses remarques très nécessaires. 
Je remercie également Brooks Boberts et Colette Moeglin pour  leurs encouragements.  Je voudrais remercier aussi Atsushi Ichino pour ses remarques. 

\section{Le plus grand quotient du groupe }\label{leplusquotient}
 Soient $G$ un groupe localement compact totalement discontinu, $(\rho, V)$ une représentation lisse de $G$.
 Si $(\pi, W)$  est une représentation lisse irréductible de $G$, on note $V_{\pi}$ le plus grand quotient $\pi$-isotypique de $V$. \\

Posant
$$V[\pi]=\cap_f \ker(f) \textrm{ où } f \textrm{ parcourt } \Hom_G(V, W),$$

on a
$$V_{\pi}= V/{V[\pi]}.$$

Il satisfait à la propriété universelle suivante: \\

l'application de quotient $V \longrightarrow V_{\pi}$ induit une isomorphisme
$$ \Hom_G(V, W) \simeq \Hom_G(V_{\pi}, W)$$
et
$$\Hom_G(V, W)= 0 \textrm{ si et seulement si } V_{\pi}= 0.$$

Dans le cas particulier où $\pi=1_G$, $V_{\pi}$ n'est pas autre que l'espace $V_G$ des coinvariants de $G$ dans $V$, c'est-à-dire que le quotient de $V$ par le sous-espace $V[G]=V[1_G]$ engendré par les éléments $\rho(g)v-v$ pour $v$ parcourant $V$ et $g$ parcourant $G$.

Soient $G$ un groupe localement compact totalement discontinu, $(\rho, V)$ une représentation lisse de $G$. On notera
$$\mathcal{R}_G(\rho)=\{ \pi \in \Irr(G)| \Hom_G(\rho, \pi) \neq 0\}$$
\begin{prop}\label{quotientzero}
Supposons que $(\rho, V)$ est une représentation de type fini. Alors $(\rho, V)=0$ si et seulement si $\mathcal{R}_G(\rho)=0$.
\end{prop}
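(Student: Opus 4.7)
Le plan est le suivant. L'implication $V = 0 \Rightarrow \mathcal{R}_G(\rho) = \emptyset$ étant évidente, tout le travail portera sur la réciproque : supposant $V$ non nulle et de type fini, je construirai une représentation lisse irréductible $\pi$ de $G$ et un morphisme non nul $V \to \pi$. L'approche naturelle est d'appliquer le lemme de Zorn à l'ensemble $\mathcal{F}$ des sous-représentations propres de $V$, ordonné par inclusion, pour obtenir une sous-représentation propre maximale $V_0$ ; je poserai alors $\pi := V/V_0$.

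Le point clef sera la vérification de l'hypothèse du lemme de Zorn, à savoir que toute chaîne totalement ordonnée $(V_i)_{i \in I}$ d'éléments de $\mathcal{F}$ admet un majorant dans $\mathcal{F}$. Le candidat évident est $W := \bigcup_i V_i$, qui est une sous-représentation de $V$ ; il s'agit de montrer qu'elle est \emph{propre}. C'est précisément ici qu'intervient de façon essentielle l'hypothèse de type fini : si $v_1, \ldots, v_n$ engendrent $V$ et si $W = V$, alors chaque $v_k$ serait contenu dans un certain $V_{i_k}$, et la totale ordination de la chaîne fournirait un indice $i$ avec $v_1, \ldots, v_n \in V_i$, d'où $V_i = V$, contredisant $V_i \in \mathcal{F}$.

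Une fois établie l'existence d'une sous-représentation propre maximale $V_0$, la représentation $\pi := V/V_0$ est non nulle, lisse (car la lissité passe aux quotients), et irréductible : par la correspondance entre les sous-objets de $V/V_0$ et les sous-représentations de $V$ contenant $V_0$, l'irréductibilité découle immédiatement de la maximalité de $V_0$. La projection canonique $V \twoheadrightarrow \pi$ est alors un élément non nul de $\Hom_G(V, \pi)$, de sorte que $\pi \in \mathcal{R}_G(\rho)$. L'obstacle principal se limite ainsi à la gestion rigoureuse du lemme de Zorn ; la preuve est essentiellement routinière et repose entièrement sur l'hypothèse de finitude, sans laquelle une chaîne exhaustive de sous-représentations propres pourrait exister.
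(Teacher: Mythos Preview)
Your argument is correct and complete; it is precisely the standard Zorn's lemma argument showing that a nonzero finitely generated smooth representation has an irreducible quotient. The paper itself does not spell this out but simply invokes the relevant lemma from Bernstein--Zelevinsky (\cite{BernZ}, p.~16), whose proof is exactly the argument you have given.
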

\begin{proof}
Ceci découle de [\cite{BernZ}, Page 16, Lemma].
\end{proof}
\begin{prop}\label{typefini}
Soient $G$ un groupe localement compact totalement discontinu, $H$ un sous-groupe fermé de $G$.
\begin{itemize}
\item[(1)] Si $H$ est aussi un sous-groupe ouvert  de $G$ et que $\pi$ est une représentation lisse de type fini de $H$, alors $c\!-\!\Ind_H^G\pi$ est une représentation lisse de type fini de $G$.
\item[(2)] Si $G/H$ est compact et que  $(\pi, V)$ est une représentation lisse de  type fini de $G$, alors $\Res_H^G \pi$ est une représentation lisse de type fini de $H$.
\end{itemize}
\end{prop}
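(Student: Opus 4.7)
Mon plan est de traiter les deux points séparément : pour (1) j'exhibe un système fini de générateurs de l'induite compacte à partir d'une famille génératrice de $\pi$, et pour (2) j'exploite la compacité de $G/H$ via une décomposition finie en doubles classes.

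Pour l'assertion (1), je considère l'application $H$-équivariante $\Phi : V_\pi \to c\!-\!\Ind_H^G \pi$ qui envoie $v$ sur la fonction $f_v$ de support $H$ définie par $f_v(h) = \pi(h)v$ (le fait que $H$ soit ouvert garantit bien la lissité de $f_v$). L'observation-clef est que, pour tout $g \in G$, le sous-espace des fonctions à support contenu dans la classe $Hg$ s'identifie à $g^{-1} \cdot \Phi(V_\pi)$, via l'action par translation à droite. Toute fonction de $c\!-\!\Ind_H^G \pi$ étant à support compact modulo $H$, elle n'est non nulle que sur un nombre fini de classes $Hg$; il en résulte que $c\!-\!\Ind_H^G \pi$ est engendré comme $G$-représentation par le sous-espace $\Phi(V_\pi)$. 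Si $v_1, \dots, v_n$ engendrent $\pi$ comme $H$-représentation, alors $\Phi(v_1), \dots, \Phi(v_n)$ engendrent $\Phi(V_\pi)$ comme $H$-module par équivariance de $\Phi$, et donc aussi $c\!-\!\Ind_H^G \pi$ comme $G$-représentation, ce qui donne la finitude voulue.

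Pour l'assertion (2), je pars d'une famille génératrice finie $v_1, \dots, v_n$ de $V$ comme $G$-module. Par lissité, il existe un sous-groupe ouvert compact $K$ de $G$ contenu dans le stabilisateur de chacun des $v_i$. La compacité de $G/H$ me donne une écriture $G = HC$ pour un certain compact $C \subset G$; en recouvrant $C$ par un nombre fini de translatées $g_1 K, \dots, g_m K$, j'obtiens la décomposition finie $G = \bigcup_{j=1}^m H g_j K$. Pour tout $g = h g_j k$ et tout $i$, on a alors $\pi(g) v_i = \pi(h) \pi(g_j) v_i$ puisque $\pi(k) v_i = v_i$. La famille finie $\{\pi(g_j) v_i\}_{1 \le i \le n, \, 1 \le j \le m}$ engendre donc $V$ comme $H$-module.

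Aucun des deux arguments ne présente d'obstacle sérieux. Le seul point qui demande un peu de vigilance est, dans (1), l'identification du sous-espace des fonctions à support dans une classe $Hg$ fixée avec $g^{-1} \cdot \Phi(V_\pi)$; c'est cette identification qui garantit que le passage de générateurs de $\pi$ à des générateurs de $c\!-\!\Ind_H^G \pi$ ne fait pas exploser le cardinal du système. Dans (2), c'est la décomposition finie en doubles classes $H \backslash G / K$ qui joue le rôle correspondant.
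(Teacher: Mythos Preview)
Ta démonstration est correcte et suit essentiellement la même démarche que celle de l'article : pour (2) les deux preuves sont identiques (décomposition finie en doubles classes $H\backslash G/K$ donnant les générateurs $\pi(g_j)v_i$), et pour (1) ton argument explicite via $\Phi$ n'est autre que le déroulé concret de l'identification $c\!-\!\Ind_H^G\pi \simeq \C[G]\otimes_{\C[H]}\pi$ que l'article invoque laconiquement (ton $f_v$ correspond à $1\otimes v$).
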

\begin{proof}
(1) Comme $H$ est un sous-groupe ouvert de $G$, l'induite compacte $c-\Ind_H^G \pi$ s'indentifie à l'induite ordinaire $\C[G] \otimes_{\C[H]} \pi$; le résultat est alors clair. \\
(2) Soient $\{v_1, \cdots, v_n\}$ un ensemble de vecteurs engendrant $V$ comme représentation de $G$ et $K$ un sous-groupe ouvert de $G$, tels que
$$ \epsilon_K\star v_i=v_i \textrm{ pour chaque }i.$$
L'image de $K$ dans $H\setminus G$ est ouverte; Comme $H\setminus G$ est compact, il existe des éléments en nombre fini, $g_1, \cdots, g_m \in G$, tels que $G$ soit l'union des $Hg_i K$ pour $i=1, \cdots, m$.
 Il en résulte que $\Res_H^G\pi$ est engendré par les éléments $\pi(g_i)v_j$ pour $ i=1, \cdots, m, j=1, \cdots, n$ comme $H$-module.
\end{proof}

Soit $\rho$ une représentation lisse du groupe localement compact totalement discontinu $G$, on définit une application $m_G(\rho,-)$   sur  $\Irr(G)$ de la façon suivante:
$$m_G(\rho, \pi)= \textrm{ le cardinal de la dimension de } \Hom_G(\rho, \pi) \textrm{ pour } \pi \in \Irr(G).$$
Ainsi $\mathcal{R}_G(\rho)$ est le support de cette application.

\begin{defn}\label{quotient}
\begin{itemize}
\item[(1)] Si $m_G(\rho, \pi)$ est  fini pour tout $\pi \in \Irr(G)$, on dit que $\rho$ est \textbf{une représentation de quotient admissible}.
\item[(2)] Si $m_G(\rho, \pi)$ est égal à $0$ ou $1$ pour tout $\pi \in \Irr(G)$, on dit que $\rho$ est \textbf{une représentation de quotient sans multiplicité}.
\item[(3)] Si $\rho$ est une représentation de quotient admissible et que le support de l'application $m_G(\rho, -)$ est un seul élément  $\pi$, on dit que $\rho$ est \textbf{une représentation de quotient de Langlands} et que $\pi$ est \textbf{le quotient de Langlands de $\rho$}.
\end{itemize}
\end{defn}
\begin{prop}\label{typeimpliquequotientadmissible}
Soit $(\rho, V)$ une représentation lisse de type fini du groupe  localement compact totalement discontinu $G$. Supposons que toute représentation irréductible de $G$ est admissible.  Alors $\rho$ est une représentation de quotient admissible.
\end{prop}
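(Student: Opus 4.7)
Le plan est de majorer $\dim \Hom_G(V, \pi)$ par une quantité finie pour chaque $\pi \in \Irr(G)$, en exploitant la génération finie de $V$ combinée à l'admissibilité de $\pi$. L'idée est qu'un morphisme $G$-équivariant issu de $V$ est entièrement déterminé par ses valeurs sur une famille génératrice, et que ces valeurs sont contraintes à vivre dans un espace de vecteurs invariants de dimension finie.

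Plus précisément, puisque $(\rho, V)$ est de type fini, je choisirais des générateurs $v_1, \ldots, v_n$ de $V$ comme $G$-module. La lissité de $\rho$ entraîne alors l'existence d'un sous-groupe ouvert compact $K$ de $G$ tel que $\rho(k) v_i = v_i$ pour tout $i$ et tout $k \in K$ (on peut prendre l'intersection finie des stabilisateurs ouverts des $v_i$). Ce $K$ sera l'outil technique central qui permettra de relier la finitude des générateurs à l'admissibilité de $\pi$.

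Soit à présent $(\pi, W)$ une représentation irréductible de $G$, admissible par hypothèse. Pour tout $f \in \Hom_G(V, W)$, les éléments $f(v_i)$ satisfont $f(v_i) = f(\rho(k) v_i) = \pi(k) f(v_i)$ pour tout $k \in K$, donc $f(v_i) \in W^K$. Comme les $v_i$ engendrent $V$ comme $G$-module, l'application linéaire
$$\Hom_G(V, W) \longrightarrow (W^K)^n, \qquad f \longmapsto (f(v_1), \ldots, f(v_n))$$
est injective. L'admissibilité de $\pi$ donne $\dim W^K < \infty$, d'où $m_G(\rho, \pi) = \dim \Hom_G(V, W) \leq n \cdot \dim W^K < \infty$, ce qui établit la conclusion.

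Il n'y a pas vraiment d'obstacle : l'argument est tout à fait standard et ne fait qu'assembler trois ingrédients élémentaires (génération finie, lissité, admissibilité). Le seul point à surveiller est de bien justifier l'existence du sous-groupe $K$ fixant simultanément tous les $v_i$, ce qui résulte immédiatement de la lissité et du fait que $n$ est fini.
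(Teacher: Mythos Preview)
La démonstration proposée est correcte et suit exactement la même approche que celle du papier : choisir des générateurs $v_1,\ldots,v_n$, prendre un sous-groupe ouvert compact $K$ les fixant tous, observer que $f(v_i)\in W^K$ et conclure via l'injection $\Hom_G(V,W)\hookrightarrow (W^K)^n$ et l'admissibilité de $\pi$.
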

\begin{proof}
Supposons que $V$ est engendré par les éléments $v_1, \cdots, v_n$ comme $G$-module. Prenons un élément $(\pi, W)$ de $\Irr(G)$. Soit $f$ un élément de $\Hom_G(V,W)$, on a
$$f(v)=f(\sum_{i=1}^n \sum_{j=1}^m \lambda_{ij}\pi(g_j)v_i)=\sum_{i=1}^n \sum_{j=1}^m  \lambda_{ij}\pi(g_j) f(v_i).$$
C'est-à-dire que l'application sera déterminée par ses valeurs en points $v_1, \cdots, v_n$.
Soit $K$ un groupe ouvert fixant tous éléments $v_1, \dots, v_n$;  alors $f(v_i)$ est un élément de $W^K$ pour tout $i$ et par hypothèse, la dimension de $W^K$ est finie.  Donc

$$\dim \Hom_G(V, W) \leq n \dim W^K< \infty.$$ 
\end{proof}
\section{La représentation de quotient admissible}\label{quotientadmissible}
Dans cette sous-section, on suppose que  $G$ est un groupe réductif $p$-adique et que $P$ est
un sous-groupe parabolique de $G$, et que $P=MN$ est une décomposition de Levi.\\

On rappel deux functeurs  non normalisés:\\
$$\Ind_P^G: \Rep(M) \longrightarrow \Rep(G) \textrm{ le foncteur d'induction parabolique},$$
$$J_N: \Rep(G) \longrightarrow \Rep(M) \textrm{ le foncteur de Jacquet}.$$
\begin{thm}\label{lesfoncteurstypefini}
Les foncteurs $(\Ind_P^G, J_N)$ préservent la classe des représentations lisses de type fini.
\end{thm}
\begin{proof}
(1) Soit $(\pi, V)$ une représentation lisse de type fini du groupe $G$, comme $G/P$ est compact, Par la Proposition \ref{typefini} (2), $\Res_P^G \pi$ est aussi de type fini. Soit l'espace $V$ qui est engendré par les éléments $v_1, \cdots, v_n$, fixés par un sous-groupe ouvert compact $K$ de $P$. L'espace de $J_N(\pi)$ est l'espace des coinvariants de $N$ dans $V$; Il est de type fini comme représentation de $P$, mais $N$ agit trivialement donc il est de type fini comme représentation de $M$.\\
(2) C'est un résultat difficile de Bernstein (voir [\cite{Rena}, Page 215, Théorème]).
\end{proof}

\begin{lemme}\label{longueurfiniequotient}
Soient $(\pi, V)$ une représentation lisse de quotient admissible de $G$ et $(\rho, W)$ une représentation lisse de longueur finie de $G$.  Alors
$$\dim_{\C} \Hom_G(\pi, \rho) \textrm{ est finie }.$$
\end{lemme}
\begin{proof}
Si
$$ 0=W_0 \leqq W_1 \leqq \cdots \leqq W_s=W$$
est une filtration de $W$ par  des sous  $G$-modules telle que $W_i/ {W_{i-1}}$ est une représentation irréductible de $G$ pour $i=1, \cdots, s$. On a une suite exacte $$1 \longrightarrow W_{s-1} \longrightarrow W \stackrel{p}{\longrightarrow} W/{W_{s-1}} \longrightarrow 1.$$
Comme le foncteur $\Hom_G(V, -)$ est exact à gauche, on obtient
$$1 \longrightarrow \Hom_G(V, W_{s-1}) \longrightarrow \Hom_G(V,W) \longrightarrow \Hom_G(V, W/{W_{s-1}}).$$
Cela implique que
$$\dim_{\C}\Hom_G(V, W) \leq \dim_{\C} \Hom_G(V, W_{s-1})+ \dim_{\C}\Hom_G(V, W/{W_{s-1}}).$$
Par récurrence sur $s$, on trouve
$$\dim_{\C} \Hom_G(V,W) \leq \sum_{i=1}^s \dim_{\C}\Hom_G(V, W_i/{W_{i-1}})< \infty \textrm{ par hypothèse}.$$
\end{proof}
\begin{lemme}\label{VNetVN}
Soit $(\pi, V)$ une représentation lisse du groupe $N$; alors l'application canonique
$$V^N \stackrel{p_N}{\longrightarrow} V_N$$ est injective.
\end{lemme}
\begin{proof}
Soit $p_N(v)=0$ pour quelque $v$ de $V^N$. Par définition, on a $v\in V[N]$. Comme $N$ est une union des sous-groupes ouverts  compacts de lui-m\^eme,  on a
$$ V[N]=\cup_K V[K] \textrm{ où } K \textrm{ parcourt les sous-groupes ouverts  compacts de } N.$$
Il existe un groupe ouvert compact $K_v$ de $N$ tel que
 $$v\in V[K_v] \textrm{ i.e } v=\sum_{i=1}^n \pi(k_i)v_i -v_i \textrm{ pour quelques } v_i \in V, k_i \in K_v.$$
 Choisissons un sous-groupe ouvert  compact $K$ de $G$ qui contient chacun des $k_i$. Alors $v=\pi(\epsilon_K)v= \pi(\epsilon_K)(\sum_{i=1}^n \pi(k_i)v_i -v_i) =0$.
\end{proof}
Rappelons le théorème de Howe et celui de Jacquet et Harish-Chandra [cf. \cite{BernZ}. p.37 et \cite{Bern3} Theorem ].
\begin{thm}[Howe]\label{longueurfiniedeHowe}
Soit $(\pi, V)$ une représentation lisse de $G$; alors $(\pi, V)$ est de longueur finie ssi elle est de type fini et admissible.
\end{thm}

\begin{thm}[Harish-Chandra, Jacquet]\label{Gadmissible}
Toute représentation lisse irréductible du groupe $G$ est admissible.
\end{thm}

\begin{cor}\label{longueurfiniedeJacquet}
Le foncteur de Jacquet préserve les représentations de longueur finie.
\end{cor}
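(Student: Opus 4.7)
The plan is to apply Howe's theorem (Theorem \ref{longueurfiniedeHowe}) to the Levi subgroup $M$: since $M$ is itself a reductive $p$-adic group, it suffices to show that $J_N(\pi)$ is both of finite type and admissible as an $M$-representation. By Howe's theorem applied to $G$, the finite length hypothesis on $(\pi, V)$ tells us that $\pi$ is of finite type and admissible over $G$. Theorem \ref{lesfoncteurstypefini} then immediately gives finite generation of $J_N(\pi)$ over $M$, so the first of the two properties is free.

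The main obstacle is the admissibility of $J_N(\pi)$, which does not formally follow from the results gathered so far. One invokes here the classical theorem of Jacquet-Casselman on preservation of admissibility by the Jacquet functor. The standard argument runs as follows: for every compact open subgroup $K_M$ of $M$, one chooses a compact open subgroup $K$ of $G$ admitting an Iwahori-type factorization $K = (K \cap \bar{N})(K \cap M)(K \cap N)$ with $K \cap M = K_M$; using this factorization and averaging over $K \cap N$, one checks that the canonical projection $p_N \colon V \to V_N$ restricts to a surjection $V^K \twoheadrightarrow J_N(\pi)^{K_M}$, and the admissibility of $\pi$ then forces $\dim J_N(\pi)^{K_M} \leq \dim V^K < \infty$.

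Once both properties are in hand, Howe's theorem applied to $M$ concludes the proof. As an alternative presentation, one may first use the exactness of $J_N$ (which is formal, since $V[N]$ is the increasing union of the $V[K']$ over compact open $K' \subseteq N$, so that taking $N$-coinvariants is an exact functor) to reduce, via a Jordan-Hölder filtration of $\pi$, to the case where $\pi$ is irreducible; admissibility of $\pi$ is then supplied by Theorem \ref{Gadmissible} in place of Howe's theorem, but the admissibility-preservation step remains the essential point of the argument.
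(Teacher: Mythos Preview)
Your argument is correct and follows the same route as the paper's one-line proof: Howe's criterion (Théorème \ref{longueurfiniedeHowe}) reduces the question to showing that $J_N(\pi)$ is of finite type and admissible, the first being supplied by Théorème \ref{lesfoncteurstypefini}. The paper cites Théorème \ref{Gadmissible} for the admissibility step; as you correctly observe, what is actually needed is the stronger Jacquet--Casselman statement that $J_N$ preserves admissibility, which the paper is tacitly folding into its reference to ``Harish-Chandra, Jacquet'', whereas you make the Iwahori-factorization argument explicit.
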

\begin{proof}
C'est une conséquence du théorème de Howe, du Théorème \ref{lesfoncteurstypefini} et du Théorème \ref{Gadmissible}.
\end{proof}

\begin{lemme}\label{MPquotientadmissible}
Soient $(\pi, V)$ une représentation lisse de quotient admissible du groupe $M$, $(\rho, W)$ une représentation irréductible du groupe $G$. Alors
$$\dim_{\C} \Hom_P(V, W) < \infty.$$
\end{lemme}
\begin{proof}
Supposons $T\in \Hom_P(V,W)$. On a $T(v)=T(nv)=nT(v)$ pour $n\in N, v\in V$. Composant par l'application de $W^N$ dans $W_N$, qui est injective ( Lemme \ref{VNetVN}), on trouve
$$\Hom_P(V, W) \hookrightarrow \Hom_P(V, W_N) \simeq \Hom_M(V, W_N).$$
Comme $W_N$ est une représentation du groupe $M$ de longueur finie ( Corollaire \ref{longueurfiniedeJacquet}), en utilisant le Lemme \ref{longueurfiniequotient},  on obtient le résultat.
\end{proof}

\begin{prop}\label{quotientIndJn}
Les foncteurs $(\Ind_P^G, J_N)$ préservent la classe des représentations lisses de quotient admissible.
\end{prop}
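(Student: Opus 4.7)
Mon plan est de traiter les deux foncteurs séparément, en utilisant dans chaque cas une adjonction entre $\Ind_P^G$ et $J_N$ (ou $J_{\bar N}$) pour ramener la finitude de la dimension d'un $\Hom$ au Lemme~\ref{longueurfiniequotient}. L'idée commune est de transférer l'hypothèse « quotient admissible » d'un côté de l'adjonction pour faire apparaître, de l'autre côté, une représentation de longueur finie.

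Pour le foncteur de Jacquet, je considérerais $(\rho, V)$ une représentation lisse de quotient admissible de $G$ et $(\sigma, W)\in\Irr(M)$. En étendant $W$ à $P$ par l'action triviale de $N$, la réciprocité de Frobenius usuelle (qui exprime que $J_N$ est adjoint à gauche de $\Ind_P^G$) donnerait
$$\Hom_M(J_N V, W) \simeq \Hom_P(V|_P, W) \simeq \Hom_G(V, \Ind_P^G W).$$
Il faudrait alors vérifier que $\Ind_P^G W$ est de longueur finie : comme $W$ est irréductible, elle est admissible par le Théorème~\ref{Gadmissible} donc de longueur finie par le Théorème~\ref{longueurfiniedeHowe} ; le Théorème~\ref{lesfoncteurstypefini} donne que $\Ind_P^G W$ est de type fini, et le fait classique que l'induction parabolique préserve l'admissibilité (conséquence de la décomposition d'Iwasawa $G = PK$) permet de conclure que $\Ind_P^G W$ est de longueur finie. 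Le Lemme~\ref{longueurfiniequotient} fournit alors la finitude cherchée.

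Pour le foncteur d'induction, je prendrais $(\sigma, V)$ de quotient admissible sur $M$ et $(\rho, W)\in\Irr(G)$. Ici il faut faire appel à la seconde adjonction de Bernstein
$$\Hom_G(\Ind_P^G V, W) \simeq \Hom_M(V, J_{\bar N} W),$$
où $\bar N$ désigne le radical unipotent du parabolique opposé $\bar P = M\bar N$. Puisque $W$ est irréductible, elle est de longueur finie, et le Corollaire~\ref{longueurfiniedeJacquet} appliqué à $\bar P$ garantit que $J_{\bar N} W$ est de longueur finie comme représentation de $M$. Le Lemme~\ref{longueurfiniequotient} donne alors la conclusion.

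L'obstacle principal sera l'invocation de la seconde adjonction de Bernstein pour le cas de l'induction : elle sort du cadre des adjonctions élémentaires utilisées jusqu'ici dans l'article, même si elle est maintenant classique. Les autres ingrédients --- notamment la préservation de l'admissibilité par induction parabolique --- sont standards et n'exigent aucun travail supplémentaire.
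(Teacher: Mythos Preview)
Ta preuve est correcte. Pour $J_N$, ton argument est identique à celui de l'article. Pour $\Ind_P^G$, en revanche, tu empruntes une route différente.

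L'article n'invoque pas la seconde adjonction de Bernstein. Il utilise une réciprocité de Frobenius élémentaire (valable puisque $G/P$ est compact et donc $\Ind_P^G = c\text{-}\Ind_P^G$) pour écrire $\Hom_G(\Ind_P^G\pi,\rho)\simeq\Hom_P(\Delta_P^{-1}\pi,\rho|_P)$, puis applique le Lemme~\ref{MPquotientadmissible}. Ce lemme, préparé exprès, observe qu'un morphisme $P$-équivariant $V\to W$ aboutit dans $W^N$, compose avec l'injection $W^N\hookrightarrow W_N$ du Lemme~\ref{VNetVN}, et ramène tout à $\Hom_M(V,W_N)$, de dimension finie puisque $W_N$ est de longueur finie. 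Ton approche via $J_{\bar N}$ est plus symétrique et conceptuellement plus nette --- les deux foncteurs se traitent exactement de la m\^eme manière --- mais elle repose sur un théorème profond, alors que la voie de l'article reste entièrement élémentaire et auto-contenue. Tu as toi-m\^eme identifié ce co\^ut ; c'est précisément ce que l'article évite en introduisant le Lemme~\ref{MPquotientadmissible}.
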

\begin{proof}
(i) Pour $\Ind_P^G$: soient $(\pi, V)$ une représentation lisse de quotient admissible du groupe $M$ et $\rho \in \Irr(G)$. On note $\Delta_P$ le fonction unimodulaire de $P$[cf. \cite{BernZ}, Page 10]. Donc on  a
$\Hom_G(\Ind_P^G \pi, \rho) \simeq \Hom_P(\Delta_P^{-1}\pi, \Res_P^G \rho),$ qui sont de dimension finie par le Lemme \ref{MPquotientadmissible}.\\
(ii) Soient $(\pi, V)$ une représentation de quotient admissible du groupe $G$
et $(\rho, W)\in \Irr(M)$. Par la réciprocité de Frobenius, on a
$$\Hom_M(J_N(\pi), \rho) \simeq \Hom_G( \pi, \Ind_P^G\rho).$$
Comme $G/P$ est compact, on sait que $\Ind_P^G \rho$ est une représentation lisse du groupe $G$ de longueur finie.  Comme $\pi$ est une représentation de quotient admissible, d'après le Lemme \ref{longueurfiniequotient}, on obtient
$$\dim_{\C}\Hom_G(\pi, \Ind_P^G\rho) < \infty,$$
ceci montre l'assertion!
\end{proof}
\section{Les quotients  pour le produit de deux groupes}\label{lesquotientsdedeuxgroupes}
Soient $G_1, G_2$ deux groupes localement compacts totalement discontinus, $(\pi, S)$ une représentation lisse de $G_1\times G_2$. 
Rappelons les définitions des ensembles $\mathcal{R}_{G_1\times G_2}(S), \mathcal{R}_{G_1}(S), \mathcal{R}_{G_2}(S)$ dans §\ref{quotientadmissible}.
\begin{lemme}[Waldspurger]\label{waldspurger1}
Soient $(\pi_1, V_1)$ une représentation admissible irréductible de $G_1$,
$(\pi_2, V_2)$ une représentation lisse de $G_2$, $V$ un sous-espace $G_1\times G_2$-invariant de $V_1\otimes V_2$. Alors il existe un sous-espace $V_2'$ de $V_2$, invariant par $G_2$, tel que $V=V_1\otimes V_2'$.
\end{lemme}
\begin{proof}
Nous suivons la démonstration de [\cite{MVW}, Pages 45-46]. On  note
$$V_2'= \{v_2'\in V_2| \textrm{ il existe } 0 \neq v_1\in V_1 \textrm{ tel que } v_1\otimes v_2' \in V\},$$
qui est un $\C$-espace $G_2$-invariant. En effet, prenons $v_1\otimes v_2', u_1 \otimes u_2' \in V$ avec $v_1, u_1 \neq 0$. 
Comme $V_1$ est irréductible, il existe $f\in \mathcal{H}(G_1)$ tel que $u_1=\pi_1(f) v_1$. 
Soit $K_2'$ un sous-groupe ouvert compact de $G_2$ fixant $v_2'$ et $u_2'$,
on a, pour $\alpha, \beta \in \C$,
 $$ \alpha \pi_1(f) \otimes \pi_2(\epsilon_{K_2'})(v_1 \otimes v_2') + \beta u_1 \otimes u_2'= u_1\otimes (\alpha v_2' + \beta u_2') \in V.$$
 Ainsi $\alpha v_2' + \beta u_2' \in V_2'$. \'Evidement, $V_1\otimes V_2'$ est un sous-espace de $V$, 
et on veut montrer que c'est tout. Si $V=0$, ceci  termine la démonstration. 
Sinon, soit $0 \neq v=\sum_{i=1}^n u_i \otimes v_i \in V$ avec $0 \neq u_i \in V_1, 0 \neq v_i \in V_2$. 
On suppose $u_1, \cdots,u_n \in V_1^{K_1}$ pour quelque  sous-groupe ouvert  compact $K_1$ de $G_1$. 
Comme $V_1$ est admissible irréductible et on sait que $V_1^{K_1}$ est un $\mathcal{H}(G,K_1)$-module irréductible de dimension finie. 
\underline{Donc} $\mathcal{H}(G,K_1) \longrightarrow \End_{\C}(V_1^{K_1})$ est surjectif,
 et on peut trouver des éléments $\epsilon_1,  \cdots, \epsilon_n$ de $\mathcal{H}(G,K_1)$, tels que $\pi_1(\epsilon_i) u_j= \delta_{ij} u_j$.
 Soit $K_2$ un sous-groupe ouvert  compact de $G_2$, qui satisfait à $\pi_2(\epsilon_{K_2}) v_i=v_i$;
alors $\pi_1(\epsilon_i) \otimes \pi_2(\epsilon_{K_2}) v= u_i \otimes v_i \in V$,
 il en résulte que $v_i \in V_2'$ pour $i=1, \cdots, n$, enfin on trouve $v\in V_1\otimes V_2'$.
\end{proof}
\begin{lemme}[Waldspurger]\label{waldspurger2}
Soient  $(\pi_1, V_1)$ une représentation admissible irréductible de $G_1$, $(\pi, V)$ une représentation lisse de $G_1\times G_2$. Supposons que
 $$\cap \ker(f)=\{0\} \textrm{ où } f \textrm{ parcourt } \Hom_{G_1}(V, V_1).$$
Alors il existe une représentation lisse $(\pi_2', V_2')$ de $G_2$, unique à isomorphisme près, telle que $\pi$ soit isomorphe au produit tensoriel externe $\pi_1 \otimes \pi_2'$.
\end{lemme}
\begin{proof}
Nous suivons la démonstration de [\cite{MVW}, Pages 45-46].\\
(1) L'unicité. Soient $V\simeq V_1\otimes V_2 \simeq V_1\otimes V_2'$ pour deux représentations lisses $(\pi_2, V_2)$, $(\pi_2', V_2')$ du groupe $G_2$, on a $$ V_2 \simeq (\check{V_1} \otimes V_1)_{G_1} \otimes V_2 \simeq (\check{V_1} \otimes (V_1 \otimes V_2))_{G_1} \simeq (\check{V_1} \otimes (V_1\otimes V_2'))_{G_1} \simeq V_2'$$ comme représentation de $G_2$.\\
(2) L'existence.\\
(i) D'abord, nous avons l'application bilinéaire
$$V\times \Hom_{G_1}(V, V_1) \longrightarrow V_1;$$
$$(v, f) \longmapsto  f(v).$$
Comme $\cap_f \Ker(f)=0$ où $f$ parcourt $\Hom_{G_1}(V, V_1)$, on obtient un morphisme injectif:
$$ V \stackrel{\star}{\hookrightarrow } \Hom_{\C}(\Hom_{G_1}(V, V_1), V_1).   $$
Pour utiliser le résultat du Lemme \ref{waldspurger1} ci-dessus, on va montrer que son image est dans $\Hom_{\C}(\Hom_{G_1}(V, V_1), \C) \otimes_{\C}V_1$.\\
(ii) Vérifions cela. Prenons un élément $0\neq v\in V$ fixé par un sous-groupe ouvert  compact $K_1\times K_2$ de $G_1\times G_2$. Supposons que $K_1$ est suffisamment petit  pour que $V_1^{K_1} \neq 0$,  qui est alors un $\mathcal{H}(G_1, K_1)$-module irréductible de dimension finie. Choisissons
 une base $v_1, \cdots, v_n$ de $V_1^{K_1}$. Pour $f\in \Hom_{G_1}(V, V_1)$, on a
$$v^{\star}(f)=f(v)=f(\pi|_{G_1}(\epsilon_{K_1})v)=\epsilon_{K_1}\star f(v)=\sum_{i=1}^n f_i(v)v_i$$ pour quelques $f_i(v)\in  \C$. On définit ainsi une application $$c_{i,v}^{\star}: \Hom_{G_1}(V, V_1) \longrightarrow \C;$$
$$f \longmapsto  f_i(v).$$
On  vérifie que $c_{i,v}^{\star}$ est $\C$-linéaire et on a alors $v^{\star}=\sum_{i=1}^n c_{i,v}^{\star} \otimes v_i\in \Hom_{\C}(\Hom_{G_1}(V, V_1), \C)\otimes V_1$.\\
(iii) Par hypothèse, $V_1$ est admissible, donc  $\Hom_{G_1}(V,V_1) \simeq \Hom_{G_1}(V\otimes \check{V_1}, \C) \simeq \Hom_{G_1}((V\otimes \check{V_1})_{G_1}, \C) \simeq \Hom_{\C}((V\otimes \check{V_1})_{G_1}, \C)$. On a $$V\stackrel{\star}{\hookrightarrow } \Hom_{\C}(\Hom_{\C}((V\otimes \check{V_1})_{G_1}, \C), \C)\otimes V_1 \simeq \big((V\otimes \check{V_1})_{G_1}\big)^{\star\star}\otimes V_1.$$ Il reste à montrer qu'en fait, l'image est $(V\otimes \check{V_1})_{G_1} \otimes V_1$. Prenons un élément $0\neq v\in V$ comme en (i); on  définit de m\^eme les applications $c_{i, v}^{\star}, \cdots, c_{n,v}^{\star}$ de $((V\otimes \check{V_1})_{G_1})^{\star\star}$. Soit  $\{ v_1^{\star}, \cdots, v_n^{\star}\}$ la base duale de $\{v_1, \cdots, v_n\}$ dans $(\check{V}_1)^{K_1}$. On a
$$c_{i,v}^{\star}=\langle f(v), v_i^{\star}\rangle = \langle \overline{v\otimes v_i^{\star}}, f\rangle$$
où $\overline{v\otimes v_i^{\star}} $ est l'image de $v\otimes v_i^{\star}$ par l'application $(V\otimes \check{V_1}) \longrightarrow ((V\otimes \check{V_1})_{G_1})^{\star\star}$. Donc nous avons montré que l'image de l'application $\star$ est un sous-espace de $(V\otimes \check{V_1})_{G_1} \otimes V_1$. En utilisant le Lemme \ref{waldspurger1}, on a $V\simeq V_2 \otimes V_1$ pour une représentation lisse $V_2$ de $G_2$; par le raisonnement pour l'unicité, nous obtenons $V_2 \simeq (V\otimes \check{V_1})_{G_1}$.
\end{proof}

\begin{rem}\label{waldspurger3}
\begin{itemize}
\item[(1)]
Dans le lemme \ref{waldspurger1} ci-dessus, supposons que la représentation $(\pi_2, V_2)$ de $G_2$ est  admissible. Alors la sous-représentation $(\pi'_2, V'_2)$ l'est aussi.
\item[(2)] Dans le lemme \ref{waldspurger2} ci-dessus, si $(\pi, V)$ est une représentation lisse admissible de $G_1\times G_2$, alors la représentation $(\pi_2', V_2')$ l'est aussi.
\end{itemize}
\end{rem}
\begin{proof}
de (2): nous suivons les démonstrations de la Proposition de [\cite{BernZ}, Page 20 ]. Soient $K_i$ un sous-groupe ouvert compact de $G_i$ pour $i=1,2$. On sait que $V^{K_1 \times K_2} \simeq (V_1 \otimes V_2')^{K_1 \times K_2}= V_1^{K_1} \otimes V_2'^{K_2}$ qui sont de dimension finie. Cela implique que  l'espace $V_2'^{K_2}$  l'est aussi ( car on peut choisir $K_1$ pour que $V_1^{K_1} \neq 0$).
\end{proof}

Soient $(\pi_1, V_1)$ une représentation admissible irréductible de $G_1$,  et $S_{\pi_1}= S/{S[\pi_1]}$  le plus grand quotient $\pi_1$-isotypique.  Par le Lemme \ref{waldspurger1}, à isomorphisme près, il existe une unique représentation lisse $(\pi_2', V_2')$ de $G_2$ telle que
$$S_{\pi_1} \simeq \pi_1 \otimes \pi_2'.$$
De plus
$$\pi_2' \simeq  \big( \check{V_1} \otimes S_{\pi_1}\big)_{G_1}$$
le plus grand quotient de $\check{V_1} \otimes S_{\pi_1}$, sur lequel $G$ agisse trivialement.\\
Par passage au dual, d'après le résultat du Lemme \ref{waldspurger1}, on trouve
$$\pi_2^{'\star} \simeq \Hom_{G_1}(\check{V_1}\otimes S_{\pi_1}, \C) \simeq \Hom_{G_1}(S_{\pi_1}, V_1) \simeq \Hom_{G_1}(S, V_1)\simeq \Hom_{G_1}(\check{V_1}\otimes S, \C).$$
L'espace $\Hom_{G_1}(S, V_1)$ est naturellement muni d'une action de $G_2$, et les isomorphismes précédents sont $G_2$-équivariants. Par passage à la partie lisse, on obtient $$\check{\pi_2'} \simeq \Hom_{G_1}(S, V_1)^{\infty} \simeq \Hom_{G_1}(\check{V_1}\otimes S, \C)^{\infty}.$$
Nous donnons de plus un lemme pour expliquer pourquoi nous intéressons au plus grand quotient $\pi_1$-isotypique.
\begin{lemme}\label{quotientdedeuxgroupes}
Soient $G_1, G_2$ deux groupes localement compacts totalement discontinus, $(\pi, S)$ une représentation lisse de $G_1 \times G_2$, $(\pi_1, V_1)$(resp. $(\pi_2, V_2)$) une représentation admissible irréductible de $G_1$ ( resp. $G_2$). On note par $S_{\pi_1}$ le plus grand quotient $\pi_1$-isotypique de $\pi$ et soit $S_{\pi_1} \simeq \pi_1 \otimes \pi_2'$, alors
\begin{itemize}
\item[(1)]  $\Hom_{G_1 \times G_2}( S, V_1 \otimes V_2) \simeq \Hom_{G_1\times G_2}(S_{\pi_1}, V_1 \otimes V_2)$.
\item[(2)]   $\Hom_{G_2}( \pi_2', \pi_2) \simeq \Hom_{G_1 \times G_2}( \pi_1 \otimes \pi_2', \pi_1 \otimes \pi_2)$.
\end{itemize}
\end{lemme}
\begin{proof}
(1) D'abord, si  $\Hom_{G_1\times G_2}(S, V_1 \otimes V_2)=0$, comme
 $S \longrightarrow S_{\pi_1}$ est surjective,  on a $\Hom_{G_1\times G_2}(S_{\pi}, V_1\otimes V_2)=0$.
Soit $f\in \Hom_{G_1 \times G_2}(S, V_1 \otimes V_2)$ qui n'est pas triviale,
 comme $(\pi_1 \otimes \pi_2, G_1\times G_2, V_1 \otimes V_2)$ est une représentation irréductible,  $f: S \longrightarrow V_1 \otimes V_2$ est surjective.
Prenons un élément non trivial $e_2$ de $V_2$.
On définit un morphisme canonique $V_1 \otimes V_2 \stackrel{p}{\longrightarrow} V_1\otimes e_2$ qui est $G_1$-équivalent.
Le composé de $f$ avec $p$ détermine un morphisme non trivial dans $\Hom_{G_1}(S, V_1)$, cela implique que $f$ se factorise par $S_{\pi_1} \longrightarrow V_1 \otimes V_2$.\\
(2) L'isomorphisme est défini par $\varphi \longrightarrow 1 \otimes \varphi$. Ce morphisme est bien défini et injectif. Il suffit de démonter qu'il est aussi surjectif. Soit $\varphi' \in \Hom_{G_1 \times G_2}(V_1 \otimes V_2', V_1 \otimes V_2)$ un élément non trivial.  Choisissons
une base $\{u_i\}_{i\in I}$ de $V_2$, nous notons par $V_{2,i}$ la droite engendré par l'élément $u_i$ pour $i\in I$. On a  $$V_2 = \oplus_{i\in I} V_{2,i}$$ qui peut plonger dans $ \prod_{i\in I} V_{2,i}$ comme un sous-espace.  Donc $V_1 \otimes V_2 \simeq \oplus_{i\in I} V_1 \otimes V_{2,i}$ se voit aussi comme un sous-espace vectoriel de $\prod_{i\in I} V_1 \otimes V_{2,i}$. Nous notons sa projection canonique $\prod_{i\in I} V_1 \otimes V_{2,i} \longrightarrow V_1 \otimes V_{2,i}$ par $p_i$. Chaque composante $V_1 \otimes V_{2,i}$ est isomorphe à $V_1$. Prenons un élément non trivial $e_2' \in V_2'$, et considérons le morphisme $\varphi'|_{V_1 \otimes e_2'}: V_1 \otimes e_2' \longrightarrow V_1 \otimes V_2$. Composons le avec $$V_1 \otimes V_2 \longrightarrow  \prod_{i\in I} V_1 \otimes V_{2,i} \stackrel{p_i}{\longrightarrow} V_1 \otimes V_{2,i}.$$ Nous obtenons un morphisme
$$\varphi_i': V_1 \otimes e_2' \longrightarrow  V_1 \otimes V_{2,i},$$
 qui est  $G_1$-équivalent.

Puisque $\pi_1$ est admissible, on sait que le lemme de Schur est vrai en ce cas, c'est-à-dire que $\End_{G_1}(V_1) \simeq \C$. Cela implique que $\varphi'_i$ est défini par
 $$ V_1 \otimes e_2' \longrightarrow V_1 \otimes V_{2,i};$$
 $$\sum_k v_k \otimes e_2' \longmapsto \sum_k v_k \otimes c_iu_i,$$ pour quelque $c_i \in \C$.

 De plus le morphisme $\prod_{i\in I} \varphi_i': V_1 \otimes e_2' \longrightarrow \prod_{i\in I} V_1 \otimes V_{2,i}$ se factorise à $V_1 \otimes e_2' \longrightarrow V_1 \otimes V_2$, ce qui implique que sauf pour un nombre fini d'indices $i$, $\varphi'_i=0$.

 Ensuite, nous définissons une application $\varphi_{e_2'}: \C e_2' \longrightarrow V_2$ par $\varphi_{e_2'}(e_2')= \sum_{i\in I} c_i u_i$. On a
 $$\varphi'|_{V_1 \otimes e_2'}= 1 \otimes \varphi_{e_2'}.$$

 De cette manière, pour chaque élément $v_2' \in V_2'$, on construit une application $\varphi_{v_2'}: \C v_2' \longrightarrow V_2$ satisfaisant à $\varphi'|_{V_1 \otimes v_2'}= 1 \otimes \varphi_{v_2'}$, et cette application est unique. On a donc

$\varphi_{\alpha v_2' + \beta v_2'}=\varphi_{\alpha v_2'} + \varphi_{\beta v_2''}=\alpha \varphi_{v_2'}+ \beta \varphi_{v_2''}$ pour $\alpha, \beta \in \C$, $v_2', v_2'' \in V_2'$.

Nous avons une application $\C$-linéaire: $ \varphi: V_2' \longrightarrow V_2$  donnée par $\varphi(\sum_i v_{2,i}'):= \sum_i\varphi_{v_{2,i}'}(v_{2,i}')$. Alors $\varphi'=1 \otimes \varphi$, et  $\varphi$ est forcément $G_2$-équivariant, i.e. $\varphi \in \Hom_{G_2}(V_2', V_2)$.
\end{proof}

En vue des applications, nous démontons un lemme de [\cite{MVW}, Page 59]. Soient $\mathcal {S}=\mathcal {S}(G)$ l'espace des fonctions de Schwartz-Bruhat sur $G$, muni de la représentation naturelle $\rho$ de $G\times G$:
$$\rho(g_1, g_2) f(g):= f(g_1^{-1} g g_2).$$
\begin{lemme}\label{exemple}
Pour tout $\pi \in \Irr(G)$, le plus grand  quotient $\pi$-isotopique de $\rho_{G\times 1}$ est isomorphe à $\pi \otimes \check{\pi}$ comme  $G\times G$-module.
\end{lemme}
\begin{proof}
Soit $\rho_{\pi}$ le plus grand quotient $\pi$-isotypique de $\rho_{G\times 1}$, alors $\rho_{\pi} \simeq \pi\otimes \sigma$. Par la discussions ci-dessus, on sait que $\check{\sigma} \simeq \Hom_{G_1}(\rho, \pi)^{\infty}$, et le résultat de  [\cite{Bern},  Page 74], a montré que $\Hom_{G_1}(S, \pi)^{\infty} \simeq \pi$.
\end{proof}

 \section{Des définitions}
 Dans cette sous-section, on supposera que toute représentation lisse irréductible de $G_1\times G_2$ est admissible. Par [\cite{BernZ}, Page 20, Proposition],  on sait qu'une représentation lisse irréductible de $G_1\times G_2$ est de la forme $\pi_1 \otimes \pi_2$ où $\pi_1$ et $\pi_2$ sont uniques à isomorphisme près.

\begin{prop}\label{lissedetypefini}
Si $(\pi,  S)$ est  une   représentation lisse  de type fini du groupe $G_1\times G_2$.  Alors\\
(1) $\pi$ est une représentation de quotient admissible.\\
(2) $\mathcal{R}_{G_1\times G_2}(S)=\varnothing$ si et seulement si $(\pi, S)=0$.\\
(3) Soient $\pi_1 \in \Irr(G_1)$, $S_{\pi_1}$ le plus grand quotient $\pi_1$-isotypique de $\pi$ et $S_{\pi_1} \simeq \pi_1\otimes \pi_2'$; alors $\pi_2'$ est  une représentation de type fini de $G_2$.
\end{prop}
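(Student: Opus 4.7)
Le plan consiste à ramener (1) et (2) aux résultats déjà disponibles dans les sections précédentes, puis à traiter (3) par un argument direct sur les générateurs.

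Pour (1), on appliquera directement la Proposition \ref{typeimpliquequotientadmissible} : l'hypothèse de cette sous-section---toute représentation lisse irréductible de $G_1 \times G_2$ est admissible---est exactement celle requise par cette proposition, et $(\pi, S)$ est de type fini par hypothèse. Le caractère admissible du quotient est donc immédiat. Pour (2), c'est exactement l'énoncé de la Proposition \ref{quotientzero}, appliqué au groupe $G_1 \times G_2$ et à la représentation de type fini $(\pi, S)$.

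Pour (3), on procède comme suit. Étant un quotient de $S$, la représentation $S_{\pi_1}$ est encore un $G_1 \times G_2$-module de type fini ; soit $x_1, \ldots, x_n$ un système fini de générateurs dans $V_1 \otimes V_2'$. En écrivant chaque $x_i$ comme somme finie de tenseurs simples, $x_i = \sum_{j=1}^{n_i} v_{ij} \otimes w_{ij}$ avec $v_{ij} \in V_1$ et $w_{ij} \in V_2'$, on obtient une famille finie $\{w_{ij}\}_{i,j}$ d'éléments de $V_2'$. Soit $W$ le sous-$G_2$-module de $V_2'$ qu'elle engendre. Alors $V_1 \otimes W$ est un sous-$G_1 \times G_2$-module de $V_1 \otimes V_2'$ contenant chacun des $x_i$, donc contenant le sous-module qu'ils engendrent, à savoir $V_1 \otimes V_2'$ tout entier. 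Comme $V_1 \neq 0$ (car $\pi_1$ est irréductible), l'exactitude du produit tensoriel sur $\C$ force $W = V_2'$, ce qui signifie précisément que $\pi_2'$ est de type fini comme $G_2$-module, engendré par la famille finie $\{w_{ij}\}$.

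L'étape principale à surveiller est l'usage implicite du Lemme \ref{waldspurger2} donnant déjà l'existence de la décomposition $S_{\pi_1} \simeq \pi_1 \otimes \pi_2'$ : ce lemme demande $\pi_1$ admissible, ce qui est toutefois automatique par l'hypothèse générale de la sous-section. À part cela, l'argument pour (3) est élémentaire et n'utilise ni admissibilité de $\pi_2'$ ni structure fine du produit tensoriel au-delà de sa platitude sur $\C$.
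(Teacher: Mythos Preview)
Your proof is correct and follows essentially the same path as the paper: (1) and (2) are deduced from Propositions~\ref{typeimpliquequotientadmissible} and~\ref{quotientzero} exactly as the paper does, and for (3) both arguments start from a finite generating set of the quotient $S_{\pi_1}\simeq V_1\otimes V_2'$ and extract generators of $V_2'$. The only cosmetic difference is that the paper takes the generators to be simple tensors from the outset and appeals to the Waldspurger lemmas (Lemmes~\ref{waldspurger1}--\ref{waldspurger2}) to conclude, whereas you expand each generator into simple tensors and use the faithfulness of $V_1\otimes_{\C}-$ directly; the underlying idea is identical.
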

\begin{proof}
Nous avons vu (1) [cf. Proposition \ref{typeimpliquequotientadmissible}] et (2) [cf. Proposition \ref{quotientzero}]. Pour (3), on a
$$S_{\pi_1} \simeq S/S[\pi_1] \simeq \pi_1 \otimes \pi_2'$$
qui sont aussi des représentations du groupe $G_1\times G_2$ de type fini. On choisit l'ensemble $\{v_1^{(1)}\otimes v_2^{'(1)}, \cdots, v_1^{(n)}\otimes v_2^{'(n)}\}$ de $n$ vecteurs  engendrant $S_{\pi_1}$. Comme $(\pi_1,V_1)$ est irréductible admissible, par le Lemme \ref{waldspurger2}, on a vu  que $\pi_2'$ est engendré par les éléments $v_2^{'(1)}, \cdots, v_2^{'(n)}$.
\end{proof}
\begin{prop}\label{grapheprojectif}
Soit $(\pi, S)$ une représentation lisse de type fini du groupe $G_1 \times G_2$.
\begin{itemize}
\item[(1)]  Si $\pi_1 \otimes \pi_2 \in \mathcal{R}_{G_1 \times G_2}(\pi)$, alors $\pi_1 \in \mathcal{R}_{G_1}(\pi)$.
\item[(2)]  Si $\pi_1 \in \mathcal{R}_{G_1}(\pi)$, alors il existe $\pi_2 \in \mathcal{R}_{G_2}(\pi)$ tel que $\pi_1 \otimes \pi_2 \in \mathcal{R}_{G_1 \times G_2}(\pi)$.
\end{itemize}
\end{prop}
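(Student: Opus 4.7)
The plan is to handle the two parts independently, each being a short rearrangement of material already established in the preceding sections.

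For \textbf{(1)}, I would argue by composing with a linear projection, in the spirit of Lemme~\ref{quotientdedeuxgroupes}(1). Given a nonzero $f \in \Hom_{G_1 \times G_2}(S, V_1 \otimes V_2)$, the standing hypothesis of the section guarantees that $\pi_1 \otimes \pi_2$ is an irreducible admissible representation of $G_1 \times G_2$, so $f$ is surjective. Picking any nonzero linear form $\ell$ on $V_2$, the map $\id_{V_1} \otimes\, \ell \colon V_1 \otimes V_2 \to V_1$ is $G_1$-equivariant and surjective, so the composite $(\id_{V_1} \otimes\, \ell) \circ f$ is a nonzero element of $\Hom_{G_1}(S, V_1)$, giving $\pi_1 \in \mathcal{R}_{G_1}(\pi)$.

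For \textbf{(2)}, I would work with the largest $\pi_1$-isotypic quotient $S_{\pi_1}$ of $S$. The hypothesis $\pi_1 \in \mathcal{R}_{G_1}(\pi)$ forces $S_{\pi_1} \neq 0$, and by Lemme~\ref{waldspurger2} one may write $S_{\pi_1} \simeq \pi_1 \otimes \pi_2'$ for a uniquely determined smooth representation $\pi_2'$ of $G_2$. The key observation, already noted in Proposition~\ref{lissedetypefini}(3), is that $\pi_2'$ is of finite type. Applying Proposition~\ref{quotientzero} to $\pi_2'$ then yields some $\pi_2 \in \Irr(G_2)$ together with a nonzero $\varphi \colon \pi_2' \to V_2$. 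The composite
\[
 S \twoheadrightarrow S_{\pi_1} \simeq \pi_1 \otimes \pi_2' \xrightarrow{\id_{V_1} \otimes\, \varphi} \pi_1 \otimes \pi_2
\]
is a nonzero $G_1 \times G_2$-equivariant morphism, so $\pi_1 \otimes \pi_2 \in \mathcal{R}_{G_1 \times G_2}(\pi)$; finally, part~(1) applied with the roles of $G_1$ and $G_2$ exchanged gives $\pi_2 \in \mathcal{R}_{G_2}(\pi)$.

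The argument is essentially a bookkeeping exercise combining the Waldspurger-type factorisation (Lemme~\ref{waldspurger2}), preservation of finite-type under passage to the largest isotypic quotient (Proposition~\ref{lissedetypefini}(3)), and non-vanishing of the support for a finitely generated representation (Proposition~\ref{quotientzero}). None of these steps should present any genuine difficulty; the only mild subtlety is that Proposition~\ref{quotientzero} applies only to representations of finite type, so one must remember to invoke Proposition~\ref{lissedetypefini}(3) to make that hypothesis available for $\pi_2'$.
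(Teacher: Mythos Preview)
Your proposal is correct and follows essentially the same approach as the paper: for (1) both compose a surjection $S \twoheadrightarrow V_1 \otimes V_2$ with a linear projection $V_1 \otimes V_2 \to V_1$, and for (2) both pass to $S_{\pi_1} \simeq \pi_1 \otimes \pi_2'$, note that $\pi_2'$ is of finite type, and extract an irreducible quotient. The only cosmetic difference is that the paper packages the final step of (2) as an appeal to Lemme~\ref{quotientdedeuxgroupes}, whereas you write out the composite map $S \twoheadrightarrow \pi_1 \otimes \pi_2' \to \pi_1 \otimes \pi_2$ explicitly.
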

\begin{proof}
(1) Soit $(\pi_1\otimes \pi_2, G_1\times G_2, V_1\otimes V_2)$ un élément de $\mathcal{R}_{G_1\times G_2}(\pi)$, on a une application non triviale
$$V\stackrel{f}{\longrightarrow} V_1\otimes V_2$$ qui est surjective. Prenons un élément $0\neq e_2 \in V_2$ et une  application canonique
$$V_2 \stackrel{p_{e_2}}{\longrightarrow} \mathbb{C}e_2.$$
Composons $f$ avec $1\otimes p_{e_2}$; nous obtenons une application non triviale de $V$ à $V_1$ i.e. $\pi_1 \in \mathcal{R}_{G_1}(\pi)$.\\
(2)  Supposons $(\pi_1, V_1) \in \mathcal{R}_{G_1}(\pi)$.
On peut trouver le plus grand quotient $\pi_1$-isotypique $S_{\pi} \simeq \pi_1 \otimes \pi_2'$, qui n'est pas trivial, cela implique que $\pi'_2$ ne l'est pas aussi. Comme $\pi_2'$ est une représentation lisse de type fini, il en résulte que $$\mathcal{R}_{G_2}(\pi_2')\neq 0.$$
D'après le lemme \ref{quotientdedeuxgroupes}, on a une bijection entre $\mathcal{R}_{G_1\times G_2}( \pi)$ et $\mathcal{R}_{G_2}(\pi_2')$. Ceci montre qu'il existe une représentation $(\pi_2,V_2)$ de $G_2$ telle que $\pi_1\otimes \pi_2 \in \mathcal{R}_{G_1\times G_2}(\pi)$.
\end{proof}

Si $(\pi, S)$ est une représentation lisse du groupe $G_1 \times G_2$, on a vu que le résultat dans la  proposition \ref{grapheprojectif} (1) sera aussi vrai, donc  on définit deux applications projectives canoniques
$$\mathcal{R}_{G_1 \times G_2}(\pi) \stackrel{p_i}{\longrightarrow} \mathcal{R}_{G_i}(\pi); \pi_1 \otimes \pi_2 \longmapsto \pi_i \textrm{ pour } i=1,2.$$

On notera leurs images par $\mathcal{R}^0_{G_i}(\pi)$.
\begin{cor}
Si $(\pi, S)$ est une représentation lisse de  type fini du groupe $G_1 \times G_2$, alors $p_i$ est surjective pour $i=1,2$.
\end{cor}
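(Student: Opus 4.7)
The plan is to read off the corollary as an immediate consequence of Proposition \ref{grapheprojectif}, in particular part (2) of that proposition, which is the substantive ingredient; part (1) only guarantees that $p_i$ is well-defined and that $\mathcal{R}^0_{G_i}(\pi) \subseteq \mathcal{R}_{G_i}(\pi)$, so the content of the statement is the reverse inclusion.

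I would start with the case $i=1$. Pick an arbitrary $\pi_1 \in \mathcal{R}_{G_1}(\pi)$; by Proposition \ref{grapheprojectif}(2) there exists $\pi_2 \in \mathcal{R}_{G_2}(\pi)$ with $\pi_1 \otimes \pi_2 \in \mathcal{R}_{G_1 \times G_2}(\pi)$. By the very definition of $p_1$ we then have $p_1(\pi_1 \otimes \pi_2) = \pi_1$, so $\pi_1$ lies in the image $\mathcal{R}^0_{G_1}(\pi)$. This proves surjectivity of $p_1$.

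The case $i=2$ is handled identically by swapping the roles of $G_1$ and $G_2$: the hypothesis on $(\pi,S)$ is symmetric, Proposition \ref{grapheprojectif} applies equally with the factors interchanged, and one again applies part (2) to an arbitrary $\pi_2 \in \mathcal{R}_{G_2}(\pi)$ to produce some $\pi_1$ such that $\pi_1 \otimes \pi_2 \in \mathcal{R}_{G_1 \times G_2}(\pi)$.

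There is no real obstacle here; the only mildly delicate point is making sure the finite-type hypothesis is used correctly. It is needed precisely to apply Proposition \ref{grapheprojectif}(2): the argument there produces a nontrivial $\pi_1$-isotypic quotient $S_{\pi_1} \simeq \pi_1 \otimes \pi_2'$ and then concludes $\mathcal{R}_{G_2}(\pi_2') \neq \varnothing$ by appealing to Proposition \ref{quotientzero}, which requires $\pi_2'$ to be of finite type; this was established in Proposition \ref{lissedetypefini}(3) under the standing finite-type assumption on $(\pi,S)$. Once that is in hand, the corollary is a one-line deduction.
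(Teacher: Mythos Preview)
Your proposal is correct and matches the paper's approach: the corollary is stated without proof immediately after Proposition~\ref{grapheprojectif}, precisely because part~(2) of that proposition gives surjectivity of $p_i$ directly. Your additional remarks tracing the role of the finite-type hypothesis back through Propositions~\ref{lissedetypefini}(3) and~\ref{quotientzero} are accurate and make explicit what the paper leaves implicit.
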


Si $p_1$(resp.  $p_2$) est injectif, alors pour chaque $\pi_1 \in \mathcal{R}_{G_1}^0(\pi)$ (resp. $\pi_2 \in \mathcal{R}_{G_2}^0(\pi)$), il existe une unique représentation irréductible $\pi_2^{(1)} \in \mathcal{R}_{G_2}(\pi)$ (resp.  $\pi_1^{(2)} \in \mathcal{R}_{G_1}(\pi)$) telle que $\pi_1 \otimes \pi_2^{(1)} \in \mathcal{R}_{G_1 \times G_2}(\pi)$ (resp.  $\pi_1^{(2)} \otimes \pi_2 \in \mathcal{R}_{G_1 \times G_2}(\pi)$).

\begin{defn}\label{representationdegraphe}
Si $p_1$(resp.  $p_2$) est injectif, on définit une application $\theta_1: \mathcal{R}_{G_1}^0(\pi)\longrightarrow \mathcal{R}_{ G_2}(\pi); \pi_1 \longmapsto \pi_2^{(1)}$, (resp.  $\theta_2: \mathcal{R}_{G_2}^0(\pi) \longrightarrow \mathcal{R}_{G_1}(\pi); \pi_2 \longmapsto \pi_1^{(2)} $), on dit que $(\mathcal{R}_{G_1\times G_2}(\pi), p_i)$ est le graphe de \textbf{l'application de Howe} $\mathcal{R}_{G_1}^0(\pi) \stackrel{\theta_1}{\longrightarrow} \mathcal{R}_{G_2}(\pi)$(resp.  $\mathcal{R}_{G_2}^0(\pi) \stackrel{\theta_2}{\longrightarrow} \mathcal{R}_{G_1}(\pi)$) et que $(\pi, G_1\times G_2)$ est \textbf{une représentation de graphe à gauche} (resp.  \textbf{ à droite}). De plus, si  $\pi$ est aussi une représentation de quotient sans multiplicité, i.e.
$$m_{G_1\times G_2}(\pi, \pi')\leq 1 \textrm{ pour tout } \pi' \in \Irr(G_1\times G_2),$$
on dit que $\pi$ est \textbf{une représentation de graphe forte à gauche} (resp. \textbf{ à droite}), et   que $\pi$ satisfait à \textbf{la propriété  de graphe forte à gauche }(resp. \textbf{ à droite}).
\end{defn}

\begin{defn}\label{representationdebigraphe}
Si $p_1,p_2$ sont injectifs, on dit que $(\mathcal{R}_{G_1\times G_2}(\pi), p_i)$ est un \textbf{bigraphe} entre $\mathcal{R}_{G_1}^0(\pi)$ et $\mathcal{R}_{G_2}^0(\pi)$, et que $(\pi, G_1\times G_2)$ est  une \textbf{représentation de Howe} (ou de bigraphe). Il a déterminé une correspondance bijective entre $\mathcal{R}^0_{G_1}(\pi)$ et $\mathcal{R}^0_{G_2}(\pi)$ qui sera appelée la \textbf{ correspondant de Howe }(ou de bigraphe), on aussi dit que $(\pi, G_1\times G_2)$ satisfait à la \textbf{propriété de Howe}(ou de bigraphe).  De plus, si $\pi$ est aussi une représentation de quotient sans multiplicité, i.e. $$m_{G_1\times G_2}(\pi, \pi')\leq 1 \textrm{ pour tout } \pi' \in \Irr(G_1\times G_2),$$
on dit que $\pi$ est une \textbf{ représentation de Howe forte } (ou  de bigraphe forte), et  que $\pi$ satisfait à la  \textbf{propriété de Howe forte }( ou de bigraphe forte ). Ce qui détermine \textbf{une correspondance de Howe forte  } ( ou de bigraphe forte ) entre $\mathcal{R}_{G_1}^0(\pi)$ et $\mathcal{R}_{G_2}^0(\pi)$.
\end{defn}
\section{La théorie de Clifford}
Soient $G$ un groupe localement compact totalement discontinu, $H$ un sous-groupe fermé de $G$. $G/H$ est un groupe abélien. On supposera que la représentation irréductible du groupe $G$ ou $H$ est admissible.
\begin{thm}\label{cliffordadmissible}
Soit $(\pi, V) \in \Irr(G)$ telle que $\pi|_H$ est admissible. Alors:
\begin{itemize}
\item[(1)] $\pi|_H$ est semi-simple, et  une représentation de quotient admissible.
\item[(2)] si $\sigma_1, \sigma_2 \in \mathcal{R}_H(\pi)$, alors il existe un élément $g\in G$ tel que $\sigma_2 \simeq \sigma_1^g$ où
$ \sigma_1^g(h):= \sigma_1(g^{-1} hg)$ pour $h\in H$.
\item[(3)]  il existe une constante $m$ telle que
$$\pi|_H=\sum_{\sigma\in \mathcal{R}_H(\pi)} m \sigma.$$
\item[(4)] soit $(\sigma, W)\in \mathcal{R}_H(\pi)$ qui est une sous-représentation de $\pi|_H$.\\
(i)  On note $ G_{\sigma}^0=\{ g\in G| g(W)=W  \}$,
 alors $G_{\sigma}^0$ est un sous-groupe distingué ouvert de $G$, et $G_{\sigma_1}^0=G_{\sigma_2}^0$
 pour  $\sigma_1, \sigma_2 \in \mathcal{R}_H(\pi)$, on le note par $\widetilde{H}^0$.\\
(ii) On note $ G_{\sigma}=\{ g\in G| \sigma^{g} \simeq \sigma \}$,
 alors $G_{\sigma}$ est un sous-groupe distingué ouvert de $G$, et $G_{\sigma_1}=G_{\sigma_2}$
 pour  $\sigma_1, \sigma_2 \in \mathcal{R}_H(\pi)$, on le note par $\widetilde{H}$.\\
(iii) $\widetilde{H}/{\widetilde{H}^0}$ est un groupe abélien d'ordre fini.
\item[(5)] (i) pour chaque $\sigma \in \mathcal{R}_H(\pi_1)$, elle peut se prolonger en une représentation irréductible du groupe $\widetilde {H}^0$, on la note aussi par
$\sigma$.\\
(ii) $\pi|_{\widetilde{H}} = \oplus_{\widetilde{\sigma} \in \mathcal{R}_{\widetilde{H}}(\pi)} \widetilde{\sigma}$, et $\widetilde{\sigma}|_H \simeq m \sigma$. Le groupe $G/{\widetilde{H}}$ permute transitivement l'ensemble $\mathcal{R}_{\widetilde{H}}(\pi)$.
\end{itemize}
\end{thm}
\begin{proof}
(1) On prend un sous-groupe ouvert compact $K_H$ de $H$ tel que $V^{K_H}\neq \varnothing$. D'après le Lemme \ref{admissibleduale}, $\pi|_H$ est aussi admissible, ce qui entraîne que $V^{K_H}$ est un espace vectoriel de dimension finie.   Posant une sous-représentation irréductible $W_1^{K_H}$ de $\mathcal{H}(H,K_H)$ de $V^{K_H}$. On obtient  une sous-représentation $(\sigma_1, W_1)$ du groupe $H$ de $\pi|_H$ où $W_1=\mathcal{H}(H)\otimes_{\mathcal{H}(H,K_H)}W_1^{K_H}$. Par [\cite{BernZ}, Page 17, Proposition], $(\sigma_1, W_1)$ est irréductible. Soit $\Omega=\{ g_i | g_i \in G\}$ un ensemble de représentants de $G/H$. Considérons $V'=\sum_{g_i \in \Omega} \pi(g_i) W_1$ qui est $G$-invariante. Donc $V' = V$, et on montre que $\pi|_H$ est semi-simple. \\
Soient $(\sigma, W) \in \mathcal{R}_H(\pi)$, et $W^K \neq 0$ pour un sous-groupe ouvert compact $K$ de $H$. Comme $\sigma$ est admissible, $W^K$ est de dimension finie. Donc on a
$$m_H(\pi, \sigma) \leq m_H( \pi, \Ind_K^H W^K) = m_K(\pi, W^K)< \infty.$$
(2) Ceci découle de la démonstration de $(1)$.\\
(3) Si $\sigma_1, \sigma_2 \in \mathcal{R}_H(\pi)$. Par (2), on a $\sigma_2 \simeq \sigma_1^g$ pour un élément $g\in G$, donc
$$m_H( \pi, \sigma_1)=m_H(\pi^g, \sigma_1^g) =m_H(\pi, \sigma_2)=m, $$
 d'où l'on tire le résultat annoncé.\\
(4) (i) Soit $W=\pi(H) w_0$ pour un vectoriel $w_0$ de $W$, on note $K_{w_0}=\stab_G(w_0)$.  Si $g\in K_{w_0}$,
on voit asiément que $\pi(g) W=W$, i.e. $G_{\sigma}^0 \supseteq K_{w_0}$. Ceci implique que $G_{\sigma}^0$ est
un groupe ouvert de $G$. Si $(\sigma_1,W_1), \, (\sigma_2, W_2) $ sont deux sous-représentations irréductibles de $\pi|_H$, alors il existe $g_{12}\in G$
tel que $g_{12}W_1=W_2$. On a un morphisme bijectif $G_{\sigma_1}^0\longrightarrow G_{\sigma_2}^0; g \longmapsto g_{12} g g_{12}^{-1}$. Comme $G/H$ est abélien,
on obtient $G_{\sigma_1}^0=G_{\sigma_2}^0$.\\
(ii) Comme $G_{\sigma_1}\supseteq G_{\sigma_1}^0$, cela implique que $G_{\sigma_1}$ est un sous-groupe ouvert de $G$.
Par la m\^eme démonstration comme (i) ci-dessus, on
trouve $G_{\sigma_1}=G_{\sigma_2}$ pour $\sigma_1, \sigma_2 \in \mathcal{R}_H(\pi)$.\\
(iii) On note $\mathcal{W}=\{ (\sigma_i, W_i) | \sigma_i$ est une sous-représentation de $\pi|_H$ telle que  $\sigma_i \simeq \sigma \}$. Le groupe $\widetilde{H}/{\widetilde{H}^0}$ permute
transitivement l'ensemble $\mathcal{W}$. Il reste à montrer que  le cardinal de $\mathcal{W}$ est fini. Soit $K$ un sous-groupe ouvert compact de $\widetilde{H}^0$, et m\^eme de $\widetilde{H}$ et $G$, tel que $W^{K\cap H} \neq \varnothing$. Un élément $(\sigma_i, W_i)$  correspond uniquement à une sous-representation
irréductible du groupe $\mathcal{H}(H, K\cap H)$ de $V^{K\cap H}$. Comme $V^{K\cap H}$ est un  espace vectoriel de dimension finie, on trouve le résultat.\\
(5) (i) Ceci découle de la défintion du groupe $\widetilde{H}^0$. \\
(ii) Comme $\pi|_{\widetilde{H}}$ est admissible, on a vu que  $\pi|_{\widetilde{H}}$ est semi-simple. La composante $\sigma$-isotypique $\widetilde{\sigma}=m\sigma$ est $\widetilde{H}$-invariante. Par définition, $\widetilde{\sigma}$ est $\widetilde{H}$-irréductible. Puisque $\widetilde{\sigma}|_H \simeq m \sigma$, et $\widetilde{\sigma_1} \ncong \widetilde{\sigma_2}$ si $\sigma_1 \ncong \sigma_2$ pour $\sigma_1, \sigma_2 \in \mathcal{R}_H(\pi)$. Si $g\in G$ satisfait à $\widetilde{\sigma}^g \simeq \widetilde{\sigma}$, a fortiori,  $\sigma^g\simeq \sigma$, donc $g\in \widetilde{H}$, d'où le résultat.\\
\end{proof}
\begin{cor}\label{equalitydequotient}
Si $\pi_1, \pi_2 \in \Irr(G)$ telles que  $\pi_1|_H, \pi_2|_H$ sont admissibles, alors:
\begin{itemize}
\item[(1)] $\mathcal{R}_H(\pi_1) \cap \mathcal{R}_H(\pi_2) \neq \varnothing$ si et seulement si $\mathcal{R}_H(\pi_1)=\mathcal{R}_{H}(\pi_2)$.
\item[(2)] Si $\mathcal{R}_H(\pi_1)=\mathcal{R}_H(\pi_2)$, alors $\pi_1 \simeq \pi_2 \otimes \chi_{G/H}$ pour un caractère $\chi_{G/H}$ de $G/H$.
\end{itemize}
\end{cor}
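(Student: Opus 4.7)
Part (1) follows immediately from Theorem \ref{cliffordadmissible}(2). If $\sigma \in \mathcal{R}_H(\pi_1) \cap \mathcal{R}_H(\pi_2)$, then for each $i=1,2$ that theorem gives $\mathcal{R}_H(\pi_i) \subseteq \{\sigma^g : g \in G\}/{\simeq}$. For the reverse inclusion, fix an $H$-sub-representation $W \simeq \sigma$ inside $\pi_i|_H$; since $H$ is normal in $G$, the subspace $\pi_i(g)W$ is again $H$-stable, and a direct calculation using that $\pi_i(h)\pi_i(g)w = \pi_i(g)\sigma(g^{-1}hg)w$ shows $\pi_i(g)W \simeq \sigma^g$ as $H$-modules, so $\sigma^g \in \mathcal{R}_H(\pi_i)$. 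Both sets therefore equal the $G$-conjugation orbit of $\sigma$, hence coincide; the converse is trivial.

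For (2), the plan is to build $\chi_{G/H}$ step by step along the tower $H \subset \widetilde{H}^0 \subset \widetilde{H} \subset G$ furnished by Theorem \ref{cliffordadmissible}(4), noting that the two intermediate groups depend only on $\sigma$, not on $\pi_i$. Fix $\sigma$ in the common set and realize it as an $H$-sub of $\pi_i|_H$; by Theorem \ref{cliffordadmissible}(5)(i) this yields irreducible extensions $\sigma_i^\dagger$ of $\sigma$ to $\widetilde{H}^0$. Since $\sigma$ is admissible irreducible, Schur's lemma gives $\End_H(\sigma) = \C$, and so the two $\widetilde{H}^0$-actions on a fixed underlying space of $\sigma$ differ by a character $\chi_0\colon \widetilde{H}^0/H \to \C^{\times}$. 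The key observation is that $\widetilde{H}^0/H$ is a subgroup of the abelian group $G/H$, so $\chi_0$ extends to a character $\chi$ of $G/H$ (by divisibility of $\C^{\times}$, i.e.\ its injectivity as $\Z$-module). Replacing $\pi_1$ by $\pi_1 \otimes \chi^{-1}$, we may assume $\sigma_1^\dagger \simeq \sigma_2^\dagger =: \sigma^\dagger$. An analogous manoeuvre applied to irreducible $\widetilde{H}$-sub-representations $\widetilde{\sigma}_i \subset \pi_i|_{\widetilde{H}}$ lying over $\sigma^\dagger$, using that $\widetilde{H}/\widetilde{H}^0$ is finite abelian (Theorem \ref{cliffordadmissible}(4)(iii)), allows us to arrange further $\widetilde{\sigma}_1 \simeq \widetilde{\sigma}_2 =: \widetilde{\sigma}$.

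It remains to pass from a common $\widetilde{H}$-sub-representation to a $G$-isomorphism. Since $\widetilde{H}$ is open, Frobenius reciprocity gives nonzero $G$-maps $c\!-\!\Ind_{\widetilde{H}}^G \widetilde{\sigma} \twoheadrightarrow \pi_i$, so both $\pi_1, \pi_2$ are irreducible quotients of this induced module. As $\widetilde{H} \triangleleft G$ with $G/\widetilde{H}$ a quotient of the abelian $G/H$, the standard Mackey/Clifford decomposition shows that any two irreducible quotients of $c\!-\!\Ind_{\widetilde{H}}^G \widetilde{\sigma}$ differ by a character of $G_{\widetilde{\sigma}}/\widetilde{H} \subset G/\widetilde{H}$. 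A final extension of this character to $G/H$ and twist of $\pi_1$ produces $\pi_1 \simeq \pi_2$; the product of the three characters produced along the way is the desired $\chi_{G/H}$. The main technical obstacle at each of the three stages is verifying that ``two irreducible extensions differ by a one-dimensional character,'' which amounts to triviality of the relevant Clifford $2$-cocycles; this holds in our abelian-quotient setup thanks to Schur's lemma applied to the $\sigma$-isotypic subspaces, and the character-lifting steps are classical.
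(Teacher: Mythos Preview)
Your Part~(1) is fine and matches the paper's argument.

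For Part~(2), your tower strategy runs into two genuine problems. First, the group $\widetilde{H}^0$ is \emph{not} intrinsic to $\sigma$: by definition it is the stabiliser in $G$ of a \emph{chosen} copy $W\subset\pi|_H$, so it depends on the ambient $\pi$. A priori the $\widetilde{H}^0$ coming from $\pi_1$ and the one coming from $\pi_2$ may differ, and then your $\sigma_1^\dagger$ and $\sigma_2^\dagger$ live on different groups and cannot be compared. (Only $\widetilde{H}=G_\sigma$ is intrinsic.) Second, your blanket claim that ``the relevant Clifford $2$-cocycles are trivial in the abelian-quotient setup'' is false: take a Heisenberg group over $\F_p$ with $H$ its centre and $\sigma$ a nontrivial central character; then $\widetilde{H}/H\simeq\F_p^2$ is abelian but $\sigma$ does not extend. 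What \emph{is} true---and is all you need---is the weaker assertion that any two irreducible representations of $\widetilde{H}$ lying over $\sigma$ differ by a character of $\widetilde{H}/H$. This does hold, but it is proved by looking at the finite-dimensional space $\Hom_H(\widetilde{\sigma}_2,\widetilde{\sigma}_1)$ with its $\widetilde{H}/H$-action and extracting a common eigenvector, not by trivialising a cocycle. Once you argue this way, the detour through $\widetilde{H}^0$ becomes superfluous, and your Step~3 simplifies: since $G_{\widetilde{\sigma}}=\widetilde{H}$ (this is shown in the proof of Theorem~\ref{cliffordadmissible}(5)(ii)), the compact induction $c\!-\!\Ind_{\widetilde{H}}^G\widetilde{\sigma}$ is already irreducible by Mackey, so there is nothing further to adjust.

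The paper takes a more direct route that avoids the tower entirely. It writes down an explicit nonzero $f\in\Hom_H(\pi_1,\pi_2)$ by hand (identity on one copy of $\sigma$, transported via $\pi_1(g_i)\mapsto\pi_2(g_i)$ on the other copies in the $\sigma$-isotypic block, zero elsewhere), checks that the $G/H$-translates $f^{\overline g}$ span a \emph{smooth} $G/H$-module, and then extracts a character eigenvector $F$ from that smooth module; this $F$ lies in $\Hom_G(\pi_1,\pi_2\otimes\chi^{-1})$. The technical work in the paper is the smoothness verification (done by tracking stabilisers of a generating vector), which your sketch never addresses. A repaired version of your approach---drop $\widetilde{H}^0$, compare the $\widetilde{\sigma}_i$ via an eigenvector in $\Hom_H(\widetilde{\sigma}_2,\widetilde{\sigma}_1)$, then use $\pi_i\simeq c\!-\!\Ind_{\widetilde{H}}^G\widetilde{\sigma}_i$---would work and has the advantage that the relevant $\Hom$-space is finite-dimensional, sidestepping part of the smoothness issue; but the argument you actually wrote is not yet a proof.
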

\begin{proof}
(1) Si $\sigma \in \mathcal{R}_H(\pi_1) \cap \mathcal{R}_H(\pi_2) $. D'après le Théorème \ref{cliffordadmissible}(1), pour chauqe $\sigma' \in \mathcal{R}_H(\pi_1)$, il existe $g\in G/H$ tel que $\sigma^g\simeq \sigma'$, donc
$$m_H(\pi_2, \sigma')=m_H(\pi_2, \sigma^g)=m_H(\pi_2^g, \sigma^g)=m_H(\pi_2, \sigma),$$
d'où $\sigma' \in \mathcal{R}_H(\pi_2)$, on trouve le résultat.\\
(2) Soit $(\sigma, W) \in \mathcal{R}_H(\pi_1) \cap \mathcal{R}_H(\pi_2)$. Comme $\pi_1|_H, \pi_2|_H$ sont admissibles. 
On suppose que $(\sigma, W)$ est une sous-représentation de $\pi_1|_H$, et $\pi_2|_H$.
 Par ailleurs, si $\pi_1|_H=W \oplus \pi_1(g_1) W \oplus \cdots \oplus \pi_1(g_m) W \oplus V_1$, et $\pi_2|_H=W\oplus V_2$, 
où $W \oplus \pi_1(g_1) W \oplus \cdots \oplus \pi_1(g_m) W$ est la composante $\sigma$-isotypique de $\pi_1|_H$.  
On définit un élément $0 \neq f\in  \Hom_H(\pi_1, \pi_2)$ de la façon suivante:\\
  Pour $w\in W$, $v_1 \in V_1$, on a 
\begin{itemize}
\item[(1)] $f|_W=\id_W$.
\item[(2)] $f|_{\pi_1(g_i)(W)}(\pi_1(g_i)w)=\pi_2(g_i)w$.
\item[(3)] $f|_{V_1}=0$.
\end{itemize}
Soit $W=\pi_1(H) w_0$ pour un vectoriel $w_0$ non trivial de $W$. On note  $K_{w_0}=\Stab_G(w_0)\cap \Stab_G(f(w_0))$, et $K_{g_i (w_0)}=\Stab_G(\pi_1(g_i)w_0)\cap \Stab_G(\pi_2(g_i) w_0)$ qui sont des groupes ouverts de $G$. On pose $K=K_{w_0} \cap \cap_{i=1}^m
K_{g_i(w_0)}$. Soit $\overline{g} \in G/H$, on définit un espace vectoriel $V_0$ engendré par $f^{\overline{g}}$, 
où $f^{\overline{g}}(v):= \pi_2(g) f\Big(\pi_1(g^{-1})v\Big)$. Si $g_0\in K$, 
on note $\overline{g_0}$ l'image de $g_0$ dans $G/H$ par $G \stackrel{p}{\longrightarrow} G/H$. 
Alors, pour $w=\sum_{i=1}^n c_i \pi_1(h_i) w_0 \in W$, et $v_1\in V_1$,
on a
\begin{itemize}
\item[(1)] $f^{\overline{g_0}}(w)= f^{\overline{g_0}}(\sum_{i=1}^n c_i\pi_1(h_i) w_0)= \sum_{i=1}^n c_i\pi_2(h_i) f^{\overline{g_0}}\Big(  w_0\Big) = \sum_{i=1}^n c_i \pi_2(h_i) f\Big(  w_0\Big)= f(w)$.
\item[(2)] $f^{\overline{g_0}}(\pi_1(g_i)w)= f^{\overline{g_0}}(\sum_{i=1}^n c_i\pi_1(g_i)\pi_1(h_i) w_0)= \sum_{i=1}^n c_i\pi_2(g_ih_ig_i^{-1}) f^{\overline{g_0}}\Big( \pi_1(g_i) w_0\Big)$\\ $ = \sum_{i=1}^n c_i \pi_2(g_ih_ig_i^{-1}) f\Big(\pi_1(g_i)  w_0\Big)= f(\pi_1(g_i)w)$.
\item[(3)] $f^{\overline{g_0}}(v_1)=\pi_2(g_0)f(\pi_1(g_0^{-1})v_1)=0=f(v_1).$
\end{itemize}
 Ceci implique que $\Stab_{G/H}(f)$ contient l'ensemble ouvert $p(K)$. 
De la m\^eme raison, pour $g\in G$, $\Stab_{G/H}(f^{\overline{g}}) \supseteq \overline{g}^{-1} p(K)$ ,
qui est un sous-ensemble ouvert de $G/H$.  
Donc  $V_0$ est une représentation lisse du groupe $G/H$ 
munie de l'action $\overline{g}\cdot F:= F^{\overline{g}}$ pour $\overline{g} \in G/H, F\in V_0$.
 Il existe un caractère $\chi_{G/H}$ et $0\neq F \in V_0$ tels 
que $\overline{g} \cdot F=\chi_{G/H} F$, c'est-à-dire que $ 0 \neq F \in \Hom_G(\pi_1, \pi_2 \otimes \chi_{G/H}^{-1})$, d'où le résultat.
\end{proof}

\begin{prop}\label{dimesionsmall1}
Soient $\pi$ une représentation lisse de quotient admissible de $G$, $\pi_1 \in \mathcal{R}_G(\pi)$.  Supposons que $\pi_1|_H$ est admissible.
\begin{itemize}
\item[(1)] Si $\sigma \in \mathcal{R}_H(\pi_1)$, alors $\sigma \in \mathcal{R}_H(\pi)$.
\item[(2)] $m_H(\pi, \sigma_1)= m_H(\pi, \sigma_2)$ pour $\sigma_1, \sigma_2 \in \mathcal{R}_H(\pi_1)$.
\item[(3)] Si $m_H(\pi, \sigma) \leq 1$ pour chaque  $\sigma \in \mathcal{R}_H(\pi_1)$, alors $m_G( \pi, \pi_1) \leq 1$.
\end{itemize}
\end{prop}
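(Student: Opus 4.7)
Mon plan est de traiter les trois assertions séparément. Les points (1) et (2) sont directs. Pour~(1), comme $\pi_1$ est irréductible, tout $\phi \in \Hom_G(\pi, \pi_1)$ non nul est surjectif, et la composée avec un $\psi \in \Hom_H(\pi_1, \sigma)$ non nul fournit un élément non nul de $\Hom_H(\pi, \sigma)$, d'où $\sigma \in \mathcal{R}_H(\pi)$. Pour~(2), le Théorème~\ref{cliffordadmissible}(2) appliqué à $\pi_1$ donne $g \in G$ tel que $\sigma_2 \simeq \sigma_1^g$ ; l'application $f \longmapsto \bigl( v \mapsto f(\pi(g^{-1})v) \bigr)$ réalise alors un isomorphisme $\C$-linéaire entre $\Hom_H(\pi, \sigma_1)$ et $\Hom_H(\pi, \sigma_1^g) \simeq \Hom_H(\pi, \sigma_2)$, la $G$-équivariance de $\pi$ assurant la bonne $H$-équivariance de l'image.

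Pour~(3), l'idée est de projeter $\pi_1|_H$ sur une unique copie de $\sigma_0$. Par le Théorème~\ref{cliffordadmissible}(1), $\pi_1|_H$ est semi-simple ; fixons $\sigma_0 \in \mathcal{R}_H(\pi_1)$, une sous-représentation $W_0 \subset \pi_1$ isomorphe à $\sigma_0$ comme $H$-module, et une décomposition $H$-équivariante $\pi_1 = W_0 \oplus W_0'$, de projection $p : \pi_1 \twoheadrightarrow W_0$. On considère alors
\[
\Phi : \Hom_G(\pi, \pi_1) \longrightarrow \Hom_H(\pi, W_0) \simeq \Hom_H(\pi, \sigma_0), \qquad f \longmapsto p \circ (f|_H).
\]
Je montrerais que $\Phi$ est injective : si $\Phi(f) = 0$, alors $f(\pi) \subseteq W_0'$ ; or $f(\pi)$ est une sous-$G$-représentation de $\pi_1$, laquelle est irréductible comme $G$-module, donc $f(\pi) \in \{0, \pi_1\}$, et le second cas est exclu puisque $W_0 \neq 0$. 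L'hypothèse $\dim_{\C} \Hom_H(\pi, \sigma_0) \leq 1$ combinée à cette injectivité donne alors $m_G(\pi, \pi_1) \leq 1$.

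L'étape délicate est bien l'injectivité de $\Phi$ : bien que le facteur $W_0'$ ne soit pas $G$-stable en général (seules les composantes $\widetilde{H}^0$-isotypiques le sont, cf.\ Théorème~\ref{cliffordadmissible}(5)), il suffit que $\pi_1 \not\subseteq W_0'$ pour conclure grâce à la seule irréductibilité $G$-module de $\pi_1$. C'est précisément pour cela qu'il faut projeter sur une \emph{unique} copie $W_0 \simeq \sigma_0$, et non sur toute la composante $\sigma_0$-isotypique : cette dernière projection ne donnerait qu'une borne en la multiplicité $m$ du Théorème~\ref{cliffordadmissible}(3), au lieu de la borne $1$ voulue.
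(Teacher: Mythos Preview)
Your proofs of (1) and (2) match the paper's. For (3), your approach is correct and genuinely simpler than the paper's. The paper introduces the intermediate subgroup $\widetilde{H}$ from Théorème~\ref{cliffordadmissible}(4), decomposes $\pi_1|_{\widetilde{H}} = \bigoplus_{\widetilde{\sigma}} \widetilde{\sigma}$ as a multiplicity-free sum, first deduces $m_{\widetilde{H}}(\pi,\widetilde{\sigma}) \leq 1$ from $m_H(\pi,\sigma) \leq 1$, and then uses the identification $\Hom_G(\pi,\pi_1) = \Hom_{\widetilde{H}}(\pi,\pi_1)^{G/\widetilde{H}}$ together with the transitive permutation action of $G/\widetilde{H}$ on the summands $\Hom_{\widetilde{H}}(\pi,\widetilde{\sigma})$ to conclude. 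Your argument bypasses $\widetilde{H}$ entirely: projecting onto a single $H$-irreducible summand $W_0 \simeq \sigma_0$ and using only the $G$-irreducibility of $\pi_1$ to force $f(\pi) \in \{0,\pi_1\}$ is all that is needed. This is shorter and uses less of the Clifford structure (only semisimplicity of $\pi_1|_H$, not the groups $\widetilde{H}^0 \subset \widetilde{H}$). The paper's route has the minor advantage of making the $G/\widetilde{H}$-equivariance explicit, which is reused later in Lemme~\ref{principallemmedegraphe}, but for this proposition alone your argument is preferable.
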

\begin{proof}
(1) On a un homomorphisme surjectif $\pi \longrightarrow \pi_1$. Ceci implique le résultat.\\
(2) D'après le Théorème \ref{cliffordadmissible}, il existe un élément $g\in G$, tel que $\sigma_2 \simeq \sigma_1^g$, donc
$m_H(\pi, \sigma_1)=m_H(\pi^g, \sigma_1^g)=m_H(\pi, \sigma_2)$.\\
(3) Supposons que $\pi_1|_H=\oplus_{\sigma \in \mathcal{R}_H(\pi_1)} m \sigma$. En vertu du Théorème \ref{cliffordadmissible}, on prend le sous-groupe $\widetilde{H}$ de $G$ tel que
\begin{equation}\label{ladecomposition}
\pi_1|_{\widetilde{H}}=\oplus_{\widetilde{\sigma} \in \mathcal{R}_{\widetilde{H}}(\pi_1)}\widetilde{\sigma}, \textrm{ où }  \widetilde{\sigma}|_H=m\sigma  \textrm{ pour }
\sigma\in \mathcal{R}_H(\pi_1).
\end{equation}
Comme $m_H(\pi, \sigma) \leq 1$, on voit que $m_{\widetilde{H}}(\pi, \widetilde{\sigma})\leq 1$.  De plus
$$\Hom_G(\pi, \pi_1) =\Hom_{\widetilde{H}}(\pi, \pi_1)^{\widetilde{H} \setminus G},$$
où on munit $\Hom_{\widetilde{H}}(\pi, \pi_1)$ de l'action évidente de $\widetilde{H}\setminus G$, i.e. $f^{\overline{g}}(v):= \pi_1(g) f\Big( \pi(g^{-1}) v\Big)$ pour
$\overline{g} = \widetilde{H} g \in \widetilde{H} \setminus G$, $f\in \Hom_{\widetilde{H}}(\pi, \pi_1)$. Par  décomposition (\ref{ladecomposition}),
on note l'application projective de $\pi_1|_{\widetilde{H}}$ à $\widetilde{\sigma}$ par $p_{\widetilde{\sigma}}$. Pour  chaque élément $F\in \Hom_{\widetilde{H}}(\pi, \pi_1)$, ce qui est
déterminé par la famille de morphismes $\{ p_{\widetilde{\sigma}}\circ F\}_{\widetilde{\sigma} \in \mathcal{R}_{\widetilde{H}}( \pi_1)}$ d'où  $p_{\widetilde{\sigma}} \circ F \in
\Hom_{\widetilde{H}}(\pi, \widetilde{\sigma})$. Le groupe $\widetilde{H}\setminus G$ agit transitivement sur
 $ \{ \Hom_{\widetilde{H}}(\pi, \widetilde{\sigma})\}_{\widetilde{\sigma}  \in \mathcal{R}_{\widetilde{H}}(\pi_1)}$, et
$\dim_{\C} \Hom_{\widetilde{H}}(\pi, \widetilde{\sigma}) \leq 1$ pour $\widetilde{\sigma} \in \mathcal{R}_{\widetilde{H}}(\pi_1)$, on trouve le  résultat.
\end{proof}
\section{Les représentations de graphe forte}
Dans cette sous-section, pour  présenter simplement les résultats, ``graphe'' signe `` graphe à gauche'' sauf mention du contraire. Et on supposera que toute représentation irréductible du groupe localement compact totalement discontinu est admissible.
\begin{prop}[ Produits]\label{graphesproduits}
Soient $G_i, H_i$ des groupes localement compacts totalement discontinus pour $i=1, 2$, $(\pi, V)$ une représentation lisse de $(G_1\times G_2) \times (H_1\times H_2)$.
\begin{itemize}
\item[(1)] Supposons que  pour chaque indice $i$, $ \pi|_{G_i \times H_i} \textrm{ satisfait à la propriété  de graphe}$. Alors $\pi$ est une représetnation de graphe du groupe $(G_1 \times G_2) \times (H_1 \times H_2)$.
\item[(2)] Supposons que $\pi$ est une représentation de graphe du groupe $G_1\times \cdots \times G_n$ et qu'il existe un couple $(G_i , G_j)$ de l'ensemble $\{ G_1, \cdots, G_n\}$ tel que $\pi|_{G_i\times G_j}$ satisfait à la condition de sans multiplicité. Alors $\pi$ est une représentation de graphe forte du groupe $G_1 \times \cdots \times G_n$.
\end{itemize}
\end{prop}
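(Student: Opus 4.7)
Pour la partie (1), soit $\sigma_1 \otimes \sigma_2 \otimes \tau_1 \otimes \tau_2 \in \mathcal{R}_{(G_1 \times G_2) \times (H_1 \times H_2)}(\pi)$. Je commencerais par fixer un morphisme surjectif équivariant $f: \pi \twoheadrightarrow \sigma_1 \otimes \sigma_2 \otimes \tau_1 \otimes \tau_2$ ainsi qu'un vecteur $w \in \pi$ avec $f(w) = v_1 \otimes v_2 \otimes u_1 \otimes u_2$, chaque composante étant non nulle. En choisissant des formes linéaires $\ell_2 \in \sigma_2^*$ et $m_2 \in \tau_2^*$ avec $\ell_2(v_2) \neq 0$ et $m_2(u_2) \neq 0$, la composition $(\id \otimes \ell_2 \otimes \id \otimes m_2) \circ f$ fournit un morphisme $G_1 \times H_1$-équivariant non nul de $\pi$ vers $\sigma_1 \otimes \tau_1$, puisque $G_2$ et $H_2$ n'agissent que sur les facteurs contractés. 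Ainsi $\sigma_1 \otimes \tau_1 \in \mathcal{R}_{G_1 \times H_1}(\pi)$, et par symétrie $\sigma_2 \otimes \tau_2 \in \mathcal{R}_{G_2 \times H_2}(\pi)$. L'hypothèse de graphe sur chaque $\pi|_{G_i \times H_i}$ entraîne alors que $\tau_i$ est uniquement déterminé par $\sigma_i$, donc $(\tau_1, \tau_2)$ est uniquement déterminé par $(\sigma_1, \sigma_2)$: c'est précisément la propriété de graphe pour $\pi$.

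Pour la partie (2), seule la condition de sans multiplicité reste à établir. Soient $(G_i, G_j)$ la paire privilégiée et $\sigma_1 \otimes \cdots \otimes \sigma_n \in \Irr(G_1\times \cdots \times G_n)$. Décomposant le produit comme $(G_i \times G_j) \times \prod_{k \neq i,j} G_k$, je considérerais le plus grand quotient $\sigma_i \otimes \sigma_j$-isotypique $S_{\sigma_i \otimes \sigma_j}$ de $\pi$ pour l'action de $G_i \times G_j$. Puisque $\sigma_i \otimes \sigma_j$ est admissible irréductible, le Lemme \ref{waldspurger2} donne un isomorphisme $S_{\sigma_i \otimes \sigma_j} \simeq (\sigma_i \otimes \sigma_j) \otimes \varpi$, où $\varpi$ est une représentation lisse de $\prod_{k \neq i,j} G_k$. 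L'étape cruciale est le calcul, via le lemme de Schur appliqué à la représentation admissible irréductible $\sigma_i \otimes \sigma_j$:
$$\Hom_{G_i \times G_j}\bigl((\sigma_i \otimes \sigma_j) \otimes \varpi, \sigma_i \otimes \sigma_j\bigr) \simeq \varpi^*.$$
L'hypothèse sans multiplicité sur $\pi|_{G_i \times G_j}$ force alors $\dim_{\C} \varpi^* \leq 1$, d'où $\dim_{\C} \varpi \leq 1$.

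On conclut en enchaînant les deux parties du Lemme \ref{quotientdedeuxgroupes}: la partie (1) donne $\Hom_{G_1\times \cdots \times G_n}(\pi, \sigma_1\otimes \cdots \otimes \sigma_n) \simeq \Hom_{G_1\times \cdots \times G_n}(S_{\sigma_i \otimes \sigma_j}, \sigma_1\otimes \cdots \otimes \sigma_n)$, et la partie (2) transforme ce dernier en $\Hom_{\prod_{k \neq i,j} G_k}(\varpi, \bigotimes_{k \neq i,j} \sigma_k)$, de dimension au plus $1$ par le contrôle dimensionnel sur $\varpi$ (les deux cas étant $\varpi = 0$ ou $\varpi$ un caractère de $\prod_{k\neq i,j}G_k$). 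Ceci établit la propriété sans multiplicité pour $\pi$, qui devient donc une représentation de graphe forte. L'obstacle principal est de passer du contrôle local sur deux facteurs à la conclusion globale: c'est la rigidité de la décomposition $S_{\sigma_i \otimes \sigma_j} \simeq (\sigma_i \otimes \sigma_j) \otimes \varpi$ issue du Lemme \ref{waldspurger2}, couplée à la réduction dimensionnelle imposée par la propriété sans multiplicité sur $(G_i, G_j)$, qui rend ce passage effectif.
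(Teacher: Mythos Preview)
Your proof is correct and follows essentially the same route as the paper's. For part~(1), the paper simply asserts that injectivity of each $p_1^{(i)}: \mathcal{R}_{G_i \times H_i}(\pi) \to \mathcal{R}_{G_i}(\pi)$ forces injectivity of the product projection $p_1$; you supply the detail the paper suppresses, namely that $\sigma_i \otimes \tau_i \in \mathcal{R}_{G_i \times H_i}(\pi)$ whenever $\sigma_1 \otimes \sigma_2 \otimes \tau_1 \otimes \tau_2 \in \mathcal{R}_{(G_1\times G_2)\times(H_1\times H_2)}(\pi)$, which is exactly the content of Proposition~\ref{grapheprojectif}(1) applied twice.

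For part~(2), the paper writes the single line
\[
m_{G_1\times \cdots \times G_n}(\pi, \pi_1 \otimes \cdots \otimes \pi_n) \leq m_{G_i\times G_j}(\pi, \pi_i\otimes \pi_j) \leq 1,
\]
whereas you unpack why this inequality holds: the Waldspurger factorization $S_{\sigma_i\otimes\sigma_j}\simeq(\sigma_i\otimes\sigma_j)\otimes\varpi$, the identification $\varpi^*\simeq\Hom_{G_i\times G_j}(\pi,\sigma_i\otimes\sigma_j)$ (stated in the paper just after Lemme~\ref{waldspurger2}), and Lemme~\ref{quotientdedeuxgroupes} to reduce to $\Hom_{\prod_{k\neq i,j}G_k}(\varpi,\bigotimes_{k\neq i,j}\sigma_k)$ with $\dim\varpi\leq 1$. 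This is precisely the mechanism underlying the paper's bare inequality; you have simply made explicit what the paper leaves to the reader.
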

\begin{proof}
Par définition, $p_1^{(i)}: \mathcal{R}_{G_i \times H_i}(\pi) \longrightarrow \mathcal{R}_{G_i}(\pi)$ est injective pour $i=1, 2$.  Il en résulte que $p_1: \mathcal{R}_{(G_1 \times H_1) \times (G_2 \times H_2)}(\pi) \longrightarrow \mathcal{R}_{G_1 \times G_2}(\pi)$ est aussi injective. Donc $(\pi, (G_1 \times G_2) \times (H_1 \times H_2))$ est une représentation de graphe.  De plus, l'application de Howe de $\pi$ soit définie par $\theta=\theta_1  \times \theta_2: \mathcal{R}^0_{G_1 \times G_2}(\pi) \longrightarrow  \mathcal{R}^0_{H_1\times H_2}(\pi)$  où $\theta_i: \mathcal{R}^0_{G_i}(\pi) \longrightarrow \mathcal{R}^0_{H_i}(\pi)$  est l'application de Howe de $\pi|_{G_i\times H_i}$.\\
(2) Soit $\pi_1\otimes \pi_2 \otimes \cdots \otimes \pi_n \in \mathcal{R}_{G_1\times \cdots \times G_n}(\pi)$, si $m_{G_i\times G_j}(\pi, \pi_i \otimes \pi_j) \leq 1$, on  a alors $m_{G_1\times \cdots \times G_n}(\pi, \pi_1 \otimes \cdots \pi_n) \leq m_{G_i\times G_j}(\pi, \pi_i\otimes \pi_j) \leq 1$. Ceci trouve le résultat.
\end{proof}

\underline{Maintenant, nous présentons  un des résultats principaux de cet article:}\\
Soient $G_1, G_2$ des groupes localement compacts totalement discontinus, $H_1$ (resp. $H_2$) un sous-groupe distingué fermé de $G_1$ (resp. $G_2$). Supposons que  $G_i/H_i$ est soluble pour $i=1,2$. On supposera que toute représentation irréductible du groupe $G_i$ ou $H_i$ est admissible. Si $\pi_i \in \Irr(G_i)$, on supposera que $\pi_i|_{H_i}$ est aussi admissible.

\begin{thm}\label{abeliengraphedefini}
Si $\Gamma$ est un sous-groupe distingué fermé  de $G_1\times G_2$ contenant $H_1\times H_2$  tel que
$$\Gamma/{H_1\times H_2} \textrm{ est le graphe d'un morphisme surjectif } \gamma: G_1/H_1 \longrightarrow G_2/H_2.$$
Supposons que $(\rho, V)$ est une représentation de graphe forte du groupe $H_1 \times H_2$ qui se prolonge en une représentation du groupe $\Gamma$. Alors
$\pi=c-\Ind_{\Gamma}^{G_1\times G_2} \rho$ est aussi une représentation de graphe forte du groupe $G_1\times G_2$.
\end{thm}
On procède par récurrence sur le cardinal du groupe  $\Gamma/{H_1 \times H_2}$, en traitant d'abord  le cas où ce groupe est abélien.

\begin{lemme}\label{principallemmedegraphe}
Dans la situation du théorème \ref{abeliengraphedefini}, supposons de plus que $G_1/H_1$ est abélien.
Si $(\pi_1, V_1) \in \Irr(G_1)$ et que $(\pi_2, V_2) \in \Irr(G_2)$ tels que $\pi_1 \otimes \pi_2\in \mathcal{R}_{G_1 \times G_2}(\pi)$.\\
(1) Pour chaque $\sigma_{\alpha} \in \mathcal{R}_{H_1}(\pi_1)$, il existe un et un seul élément $\delta_{\alpha} \in \mathcal{R}_{H_2}(\pi_2)$ tel que $\sigma_{\alpha} \otimes \delta_{\alpha}\in \mathcal{R}_{H_1 \times H_2}(\rho)$.\\
(2) $\mathcal{R}_{\Gamma}(\rho) \cap \mathcal{R}_{\Gamma}(\pi_1 \otimes \pi_2)= \{ \widetilde{\sigma}\}$, et $m_{\Gamma}(\pi_1 \otimes \pi_2, \widetilde{\sigma})=1.$\\
(3) $m_{G_1 \times G_2}(\pi, \pi_1 \otimes \pi_2)= 1$.
\end{lemme}
\begin{proof}
(1)  Comme $\pi_1 \otimes \pi_2 \in \mathcal{R}_{G_1 \times G_2}(\pi)$, en vertu du  Théorème Réciproque de Frobenius, on a
$$0 \neq m_{G_1 \times G_2}(\pi, \pi_1 \otimes \pi_2)=m_{\Gamma}(\rho, \pi_1 \otimes \pi_2).$$
A fortiori, $m_{H_1 \times H_2}( \rho, \pi_1 \otimes \pi_2) \neq 0$. Quitte à change l'indexation, on trouve que
$m_{H_1 \times H_2}(\rho, \sigma_1 \otimes \delta_1) \neq 0$ pour des éléments $\sigma_1 \in \mathcal{R}_{H_1}(\pi_1)$, $ \delta_1 \in \mathcal{R}_{H_2}(\pi_2)$. En vertu de la théorie de Clifford, il existe un élément $t_{\alpha}H_1$ de $G_1/H_1$ tel que
$\sigma_1^{t_{\alpha}}\simeq\sigma_{\alpha}$ où $\sigma_1^{t_{\alpha}}(h_1):= \sigma_1(t_{\alpha}^{-1} h_1 t_{\alpha})$ pour $h_1 \in H_1$.
Soit $\gamma(t_{\alpha}H_1) =s_{\alpha}H_2 \in G_2/{H_2}$; alors $(t_{\alpha}, s_{\alpha})  H_1 \times H_2$ appartient à $\Gamma/{H_1 \times H_2}$. Supposons que $(t_{\alpha}, s_{\alpha}) \in \Gamma$ et $\delta_{\alpha}= \delta_1^{s_{\alpha}}$. On a
$$ 0 \neq m_{H_1 \times H_2}(\rho, \sigma_1 \otimes \delta_1) = m_{H_1 \times H_2}( \rho^{(t_{\alpha}, s_{\alpha})}, \sigma_1^{t_{\alpha}} \otimes \delta_1^{s_{\alpha}})=m_{H_1 \times H_2}(\rho, \sigma_{\alpha} \otimes \delta_{\alpha}).$$
Il en résulte que $\sigma_{\alpha} \otimes \delta_{\alpha} \in \mathcal{R}_{H_1 \times H_2}(\rho)$. De plus, comme $\rho|_{H_1 \times H_2}$ satisfait à la propriété de graphe, la représentation $\delta_{\alpha}$ est déterminée uniquement par $\sigma_{\alpha}$.\\
(2) Soit $\widetilde{\sigma} \in \mathcal{R}_{\Gamma}(\rho) \cap \mathcal{R}_{\Gamma}(\pi_1 \otimes \pi_2)$.
 Si $\sigma_1 \otimes \delta_1 \in \mathcal{R}_{H_1 \times H_2} (\widetilde{\sigma})$. Par la Proposition \ref{dimesionsmall1}(1),
on voit que $\sigma_1 \otimes \delta_1 \in \mathcal{R}_{H_1 \times H_2}(\rho)$. Pour chaque élément $\sigma_{\alpha} \otimes \delta_{\alpha} \in
\mathcal{R}_{H_1 \times H_2}(\rho)\cap \mathcal{R}_{H_1 \times H_2}( \pi_1 \otimes \pi_2)$, on peut prendre $(t_{\alpha}, s_{\alpha}) \in \Gamma$
tel que $\sigma_1^{t_{\alpha}} \otimes \delta_1^{s_{\alpha}}\simeq \sigma_{\alpha} \otimes \delta_{\alpha}$.
Il en résulte que $\sigma_{\alpha} \otimes \delta_{\alpha} \in \mathcal{R}_{H_1 \times H_2}(\widetilde{\sigma})$, i.e
$\mathcal{R}_{H_1 \times H_2}(\rho)\cap \mathcal{R}_{H_1 \times H_2}( \pi_1 \otimes \pi_2)=\mathcal{R}_{H_1 \times H_2}(\widetilde{\sigma})$.
On va montrer que $\mathcal{R}_{\Gamma}(\rho) \cap \mathcal{R}_{\Gamma}(\pi_1 \otimes \pi_2) = \{ \widetilde{\sigma}\}$.
Si $\widetilde{\sigma}\neq  \widetilde{\sigma}' \in \mathcal{R}_{\Gamma}(\rho) \cap \mathcal{R}_{\Gamma}(\pi_1 \otimes \pi_2)$, la m\^eme démonstration comme ci-dessus, on a
$\mathcal{R}_{H_1 \times H_2}(\widetilde{\sigma})=\mathcal{R}_{H_1 \times H_2}(\widetilde{\sigma}')$. Par la théorie de Clifford, on obtient  $\widetilde{\sigma}' \simeq
\widetilde{\sigma} \otimes \chi_{\Gamma/{H_1 \times H_2}}$ pour un caractère non-trivial $\chi_{\Gamma/{H_1 \times H_2}} \in \Irr(\Gamma/{H_1 \times H_2})$. On prend un
m\^eme  espace vectoriel $\widetilde{W}$ réalizé les représentations $\widetilde{\sigma}$ et $\widetilde{\sigma}'$. On prend un élément $(\sigma_1 \otimes \delta_1, W_1 \otimes
W_2)$ de $\mathcal{R}_{H_1 \times H_2}( \widetilde{\sigma})= \mathcal{R}_{H_1 \times H_2}(\widetilde{\sigma}')$. Si $0 \neq F \in \Hom_{\Gamma}(\rho, \widetilde{\sigma})$,
et $0 \neq G \in \Hom_{\Gamma}(\rho, \widetilde{\sigma}')$, on considère le morphisme  $\rho \oplus \rho\otimes \chi^{-1}_{\Gamma/{H_1 \times H_2}} \stackrel{F \oplus G}{\longrightarrow} \widetilde{\sigma}$.
Par l'application canonique $\widetilde{W}\stackrel{p}{\longrightarrow} W_1 \otimes W_2$, nous obtenons deux éléments non trivials $p\circ F, p\circ G$ dans $\Hom_{H_1  \times H_2}(\rho,
\sigma_1 \otimes \delta_1)$. Donc $p\circ F$ est proportionnel à $p\circ G$, ceci implique $p\circ F =c p\circ G$ pour une constante $c\in \C^{\times}$, i.e. $p\circ \big( F-cG \big)=0$,  d'où $F =cG$. Cela
contradict à l'hypothèse. Finalement, on trouve $\mathcal{R}_{\Gamma}(\rho)\cap \mathcal{R}_{\Gamma}(\pi_1 \otimes \pi_2)= \{\widetilde{\sigma}\}$. Comme $\rho|_{H_1 \times H_2}$ est une représentation de sans multiplicité du groupe $H_1 \times H_2$,
on obtient $m_{\Gamma}(\pi_1 \otimes \pi_2, \widetilde{\sigma})=1$. \\
(3) $$1 \leq m_{G_1 \times G_2}(\pi, \pi_1 \otimes \pi_2)=m_{\Gamma} (\rho, \pi_1 \otimes \pi_2)=m_{\Gamma}
(\rho, \widetilde{\sigma})\stackrel{ \textrm{ Proposition \ref{dimesionsmall1}(3)}}{\leq} 1,$$
d'où le résultat.
\end{proof}
\ \\
\textbf{La preuve du Théorème \ref{abeliengraphedefini}}:\\
\underline{La première étape--- cas abélien: Supposons que $G_i/H_i$ est un groupe abélien pour $i=1, 2$}.\\
La propriété de sans multiplicité vient du Lemme \ref{principallemmedegraphe}(3). Il suffit de montrer que $\pi$ satisfait à la propriété de graphe.  Supposons $\pi_1 \otimes \pi_2, \pi_1 \otimes \pi_2' \in \mathcal{R}_{G_1\times G_2}(\pi)$. D'après le Lemme \ref{principallemmedegraphe}, on voit que
 $$\mathcal{R}_{\Gamma}(\pi_1 \otimes \pi_2) \cap \mathcal{R}_{\Gamma}(\rho) =\{ \widetilde{\sigma}\}, \quad \widetilde{\sigma}|_{H_1 \times H_2}=
\oplus_{\sigma_{\alpha} \otimes \delta_{\sigma} \in \mathcal{R}_{H_1 \times H_2}(\rho)\cap \mathcal{R}_{H_1 \times H_2}(\pi_1 \otimes \pi_2)} m \sigma_{\alpha} \otimes \delta_{\alpha}.$$
 et
 $$\mathcal{R}_{\Gamma}(\pi_1 \otimes \pi_2') \cap \mathcal{R}_{\Gamma}(\rho) =\{ \widetilde{\sigma}'\}, \quad
\widetilde{\sigma}'|_{H_1 \times H_2}= \oplus_{\sigma_{\alpha} \otimes \delta_{\sigma}' \in \mathcal{R}_{H_1 \times H_2}(\rho)\cap \mathcal{R}_{H_1 \times H_2}(\pi_1 \otimes \pi_2')}
m'\sigma_{\alpha} \otimes \delta_{\alpha}'.$$
 Donc par la propriété de graphe de la représnetation $\rho|_{H_1 \times H_2}$,   on obtient $\delta_{\alpha}= \delta_{\alpha}'$ et  $\mathcal{R}_{H_2}(\pi_2)=\mathcal{R}_{H_2}(\pi_2')$. En vertu de la théorie de Clifford, $\pi_2' \simeq \pi_2 \otimes \chi_{G_2/{H_2}}$
pour un caractère $\chi_{G_2/H_2}$ du groupe $G_2/{H_2}$, et $m=m'$.\\
 D'autre part, en suivant la démonstration du Lemme \ref{principallemmedegraphe} (2), on montre que $\widetilde{\sigma}=\widetilde{\sigma}'$.
 Par le Corollaire \ref{equalitydequotient}, on voit que  $\mathcal{R}_{\Gamma}(\pi_1 \otimes \pi_2)=\mathcal{R}_{\Gamma}(\pi_1 \otimes \pi_2')$. Cela implique qu'il existe un caractère $\chi_{G_1 \times G_2 /{\Gamma}}$ tel que
 $$\pi_1 \otimes \pi_2' \simeq \pi_1 \otimes \pi_2 \otimes \chi_{G_1 \times G_2/{\Gamma}}.$$
 Ainsi $1 \otimes \chi_{G_2/{H_2}}$ est trivial sur $\Gamma$. Puisque l'application $\Gamma/{H_1\times H_2} \longrightarrow G_2/H_2$ est surjective, il est clair que $\chi_{G_2/{H_2}}=1$,  donc $\pi_2 \simeq \pi_2'$.\\
\underline{La deuxième étape--- raisonnement par récurrence }:
On vient de traiter le cas où $G_1/H_1$ est abélien.  Pour le cas général, on procède par récurrence sur
le cardinal de $G_1/H_1$. Si $G_1 \neq H_1$, introduisons une sous-groupe $G_1'$ de $G_1$, contenant $H_1$, et tel que $G_1/{G_1'}$ soit abélien. Notons $G_2'$ l'image inverse dans $G_2$
 de $\gamma(G_1'/H_1) \subset G_2/H_2$. Alors $\gamma$ induit un homomorphisme surjectif $\gamma'$ de $G_1'/H_1$ sur $G_2'/H_2$; on note $\Gamma'$ le sous-groupe de $\Gamma$ contenant $H_1 \times H_2$ et tel que $\Gamma'/{H_1 \times H_2}$ soit le graphe de $\gamma'$. Par récurrence, la représentation $\pi'=c-\Ind_{\Gamma'}^{G_1' \times G_2'} \rho$ est une représentation de graphe forte. D'autre part, $\gamma$ induit par passage aux quotients un homomorphisme surjectif de $G_1/{G_1'}$ sur $G_2/{G_2'}$, dont le graphe est $\Gamma(G_1' \times G_2')/{G_1' \times G_2'}$. Considérons la représentation $\rho'=c-\Ind_{\Gamma}^{\Gamma(G_1' \times G_2')} \rho$; sa restriction à $G_1' \times G_2'$ est $\pi'$. Par le cas abélien traité dans la première étape, on obtient que $c-\Ind_{\Gamma(G_1' \times G_2')}^{G_1 \times G_2} \rho'$ est une représentation de graphe forte de $G_1 \times G_2$. Mais cette représentation n'est pas autre que $c-\Ind_{\Gamma}^{G_1 \times G_2} \rho$.

\begin{rem}
Pour le cas général, on peut traiter le sous-ensemble $\Irr^{ad}(G_i)$ de $\Irr(G_i)$ dont les objets sont les $\pi_i$ où chacun  $\pi_i|_{H_i}$ soit admissible.
\end{rem}

\begin{rem}
Tous les résultats dans cette sous-section sont aussi vrais, si on considère `` graphe à droite''.
\end{rem}
\section{Les représentations de bigraphe forte}\label{lesrepresentationdebigrapheforte}
Dans cette sous-section,  on supposera que toute représentation irréductible du groupe localement compact totalement discontinu est admissible.
\begin{prop}[ Produits]\label{bigraphesproduits}
Soient $G_i, H_i$ des groupes localement compacts totalement discontinus pour $i=1, 2$. Soit $(\pi, V)$ une représentation lisse de $(G_1\times G_2) \times (H_1\times H_2)$,
\begin{itemize}
\item[(1)] Supposons que  pour chaque index $i$, $ \pi|_{G_i \times H_i} \textrm{ satisfait à la propriété  de bigraphe}$.  Alors $\pi$ est une représetnation de bigraphe du groupe $(G_1 \times G_2) \times (H_1 \times H_2)$.
\item[(2)] Supposons que $\pi$ est une représentation de graphe du groupe $G_1\times \cdots \times G_n$ et qu'il existe un couple $(G_i , G_j)$ de l'ensemble $\{ G_1, \cdots, G_n\}$ tel que $\pi|_{G_i\times G_j}$ satisfait à la condition de sans multiplicité.  Alors $\pi$ est une représentation de bigraphe forte du groupe $G_1 \times \cdots \times G_n$.
\end{itemize}
\end{prop}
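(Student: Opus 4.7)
La démonstration suit mutatis mutandis celle de la Proposition \ref{graphesproduits}, en appliquant les mêmes arguments aux deux projections $p_1$ et $p_2$ au lieu d'une seule. Pour (1), l'hypothèse de bigraphe sur $\pi|_{G_i\times H_i}$ pour $i=1,2$ signifie que les deux applications $p_1^{(i)}: \mathcal{R}_{G_i\times H_i}(\pi) \to \mathcal{R}_{G_i}(\pi)$ et $p_2^{(i)}: \mathcal{R}_{G_i\times H_i}(\pi) \to \mathcal{R}_{H_i}(\pi)$ sont injectives. En identifiant $(G_1\times G_2)\times(H_1\times H_2)$ avec $(G_1\times H_1)\times(G_2\times H_2)$ via la permutation évidente des facteurs, tout élément de $\mathcal{R}_{G_1\times G_2\times H_1\times H_2}(\pi)$ se décompose sous la forme $(\pi_1\otimes\sigma_1)\otimes(\pi_2\otimes\sigma_2)$ avec $\pi_i\otimes\sigma_i \in \mathcal{R}_{G_i\times H_i}(\pi)$ (cela requiert l'hypothèse que les irréductibles de chaque $G_i\times H_i$ sont de la forme $\pi_i\otimes\sigma_i$, déjà utilisée dans la définition §4). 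L'injectivité de la projection globale $p_1: \mathcal{R}_{G_1\times G_2\times H_1\times H_2}(\pi) \to \mathcal{R}_{G_1\times G_2}(\pi)$ suit alors composante par composante de celle des $p_1^{(i)}$; et de même pour $p_2$ grâce aux $p_2^{(i)}$. On en déduit que $\pi$ est une représentation de bigraphe, dont la correspondance de Howe est le produit $\theta_1\times\theta_2$ des correspondances pour $i=1,2$.

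Pour (2), l'enjeu est de renforcer la propriété de bigraphe en celle de bigraphe forte, c'est-à-dire d'ajouter l'absence de multiplicité. L'argument clé est la majoration élémentaire : pour toute représentation irréductible $\pi_1\otimes\cdots\otimes\pi_n$ de $G_1\times\cdots\times G_n$, tout morphisme non nul de $\pi$ vers ce produit externe se restreint en un morphisme non nul vers $\pi_i\otimes\pi_j$ au niveau de $G_i\times G_j$, d'où
$$m_{G_1\times\cdots\times G_n}(\pi, \pi_1\otimes\cdots\otimes\pi_n) \leq m_{G_i\times G_j}(\pi|_{G_i\times G_j}, \pi_i\otimes\pi_j) \leq 1,$$
la dernière inégalité venant de l'hypothèse sur le couple $(G_i, G_j)$. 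Ainsi $\pi$ est de quotient sans multiplicité, et combinée à la propriété de bigraphe déjà supposée, cela donne la propriété de bigraphe forte.

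L'obstacle principal est purement notationnel et non conceptuel : il faut soigneusement gérer la permutation de facteurs $(G_1\times G_2)\times(H_1\times H_2) \simeq (G_1\times H_1)\times(G_2\times H_2)$ et vérifier que les projections $p_1, p_2$ se comportent bien sous cette identification. Une fois cela posé, les arguments de la Proposition \ref{graphesproduits} s'appliquent textuellement à chaque projection, sans ingrédient nouveau.
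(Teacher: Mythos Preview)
Your proof is correct and follows exactly the paper's approach: the paper's own proof consists of the single line ``C'est une cons\'equence de la Proposition \ref{graphesproduits}'', and you have simply unpacked that reference in detail, applying the injectivity argument of \ref{graphesproduits}(1) to both projections $p_1$ and $p_2$ and repeating verbatim the multiplicity bound of \ref{graphesproduits}(2). One small remark: in part (2) the printed hypothesis says ``repr\'esentation de graphe'' while the conclusion says ``bigraphe forte''; you have (reasonably) read the hypothesis as ``bigraphe'', which is clearly the intended meaning given the context of the section and the parallel with \ref{graphesproduits}(2).
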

\begin{proof}
C’est une conséquence  de la Proposition \ref{graphesproduits}.
\end{proof}

\begin{prop}\label{troisgroupesdebigrapheforte}
Soient $G_1, G_2, H$ des groupes localement compacts totalement discontinus et $H$ un groupe abélien, $\gamma$ un automorphisme de $H$, soit
$(\pi, V) \textrm{ une représentation lisse de } G_1 \times G_2 \times H$. Gr\^ace au morphisme des groupes
$$(G_1 \times H) \times (G_2 \times H) \longrightarrow G_1\times G_2 \times H$$
$$[(g_1, h_1), (g_2,h_2) \longmapsto (g_1g_2, h_1\gamma(h_2))]$$
on obtient une représentation  $\widetilde{\pi}$ du groupe
$(G_1 \times H) \times (G_2 \times H)$.
\begin{itemize}
\item[(1)] Si $\pi|_{G_1 \times G_2}$ est une représentation de bigraphe, alors $\widetilde{\pi}$ l'est aussi.
\item[(2)] Si $\pi|_{G_1\times G_2}$ est une représentation de bigraphe forte, alors $\widetilde{\pi}$ l'est aussi.
\end{itemize}
\end{prop}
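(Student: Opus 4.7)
L'idée est de calculer directement $\Hom_{(G_1\times H)\times(G_2\times H)}(\widetilde\pi,\tau)$ pour $\tau$ irréductible, ce qui ramènera la bigraphicité de $\widetilde\pi$ à celle, supposée, de $\pi|_{G_1\times G_2}$.  Puisque $H$ est abélien et central dans $G_1\times H$ (resp. $G_2\times H$), toute $\tau\in\Irr((G_1\times H)\times(G_2\times H))$ s'écrit
$\tau=\sigma_1\boxtimes\chi_1\boxtimes\sigma_2\boxtimes\chi_2$
avec $\sigma_i\in\Irr(G_i)$ et $\chi_i$ caractère de $H$.  Un morphisme $\varphi:\widetilde\pi\longrightarrow\tau$ doit satisfaire, en prenant $g_1=g_2=1$ et respectivement $h_2=1$ puis $h_1=1$, les égalités
$\varphi(\pi(1,1,h)v)=\chi_1(h)\varphi(v)=\chi_2(\gamma^{-1}(h))\varphi(v)$.
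Donc $\chi_2=\chi_1\circ\gamma$ est forcée, et dans ce cas $\varphi$ s'identifie naturellement à un morphisme $G_1\times G_2\times H$-équivariant de $\pi$ dans $\sigma_1\boxtimes\sigma_2\boxtimes\chi_1$.  On obtient l'identification
\[
\Hom_{(G_1\times H)\times(G_2\times H)}(\widetilde\pi,\sigma_1\boxtimes\chi_1\boxtimes\sigma_2\boxtimes\chi_2)\simeq
\Hom_{G_1\times G_2\times H}(\pi,\sigma_1\boxtimes\sigma_2\boxtimes\chi_1)
\]
lorsque $\chi_2=\chi_1\circ\gamma$, et $0$ sinon.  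Puisque $H$ agit trivialement sur $\sigma_1\boxtimes\sigma_2$ vu comme $G_1\times G_2$-module, ce dernier espace s'injecte dans $\Hom_{G_1\times G_2}(\pi,\sigma_1\boxtimes\sigma_2)$.

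Pour (1), il s'agira de vérifier l'injectivité des deux projections $\tilde p_i:\mathcal{R}(\widetilde\pi)\longrightarrow\mathcal{R}_{G_i\times H}(\widetilde\pi)$.  Par la formule ci-dessus, $\sigma_1\boxtimes\chi_1\boxtimes\sigma_2\boxtimes\chi_2\in\mathcal{R}(\widetilde\pi)$ entraîne $\chi_2=\chi_1\gamma$ et $\sigma_1\boxtimes\sigma_2\in\mathcal{R}_{G_1\times G_2}(\pi|_{G_1\times G_2})$.  Si l'on fixe $\sigma_1\boxtimes\chi_1$, le caractère $\chi_2$ est donc déterminé, et l'hypothèse de bigraphe sur $\pi|_{G_1\times G_2}$ fournit l'unicité de $\sigma_2$.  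L'injectivité de $\tilde p_2$ est analogue, en utilisant l'automorphisme $\gamma^{-1}$ pour revenir en arrière.

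Pour (2), il suffit d'observer que l'inclusion
$\Hom_{G_1\times G_2\times H}(\pi,\sigma_1\boxtimes\sigma_2\boxtimes\chi_1)\hookrightarrow\Hom_{G_1\times G_2}(\pi,\sigma_1\boxtimes\sigma_2)$
transporte immédiatement la condition de multiplicité $\leq 1$ de $\pi|_{G_1\times G_2}$ à $\widetilde\pi$.  L'unique point technique est la manipulation de l'identification $(G_1\times H)\times(G_2\times H)\simeq G_1\times G_2\times H\times H$ et le calcul caractère-par-caractère dans le premier paragraphe; une fois celui-ci établi, les points (1) et (2) sont tous deux formels.  Je n'anticipe pas d'obstacle sérieux, la propriété d'admissibilité universelle supposée au début de la sous-section garantissant que le lemme de Schur s'applique librement et que les décompositions irréductibles de $G_i\times H$ se factorisent bien selon le caractère central.
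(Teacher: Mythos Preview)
Your approach is correct and essentially identical to the paper's own proof. Both arguments reduce to the same calculation: taking $g_1=g_2=1$ and letting $h_1,h_2$ vary forces $\chi_2=\chi_1\circ\gamma$, after which the injectivity of the projections $\tilde p_i$ follows from the bigraphe hypothesis on $\pi|_{G_1\times G_2}$, and the multiplicity bound for (2) follows from the inclusion $\Hom_{G_1\times G_2\times H}(\pi,\sigma_1\boxtimes\sigma_2\boxtimes\chi_1)\hookrightarrow\Hom_{G_1\times G_2}(\pi,\sigma_1\boxtimes\sigma_2)$. Your version is in fact slightly more explicit than the paper's in writing out the isomorphism of $\Hom$ spaces, but the underlying computation and logic are the same.
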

\begin{proof}
(1) Soit $(\pi_1 \otimes \chi_1) \otimes (\pi_2 \otimes \chi_2) \in \mathcal{R}_{(G_1 \times H) \times (G_2 \times H)}(\widetilde{\pi})$.
 Si $$0 \neq F \in \Hom_{(G_1\times H) \times (G_2 \times H)}(\pi, (\pi_1 \otimes \chi_1) \otimes (\pi_2 \otimes \chi_2)).$$
On a
$$F(\pi((g_1\otimes g_2), h_1\gamma(h_2))v)=\pi_1(g_1) \otimes \pi_2(g_2) F(v) \chi_1(h_1) \chi_2(h_2) \textrm{ pour } v\in V, g_i \in G_i, h_i \in H.$$
En particulier, prenons $g_1=g_2=1$, $h_1=h\in H$ et $h_2=\gamma^{-1}(h^{-1}) \in H$, nous obtenons
$$F(v)= F(v) \chi_1(h) \chi_2\Big(\gamma^{-1}(h^{-1})\Big) \textrm{ pour tout } v\in V.$$
Comme $F\neq 0$ et que $\gamma$  est un isomorphisme , on trouve $\chi_2=\chi_1^{\gamma}$  où $\chi_1^{\gamma}(h):= \chi_1\Big(\gamma(h)\Big)$ pour $h\in H$. Si $\theta_{\pi}$ est l'application de Howe de $\pi|_{G_1 \times G_2}$, alors on obtient une application de Howe de $\widetilde{\pi}$:
$$\theta: \mathcal{R}_{G_1 \times H_1}(\widetilde{\pi}) \longrightarrow \mathcal{R}_{G_2 \times H_2}(\widetilde{\pi});$$
$$ \pi_1 \otimes \chi_1 \longmapsto \theta_{\pi}(\pi_1) \otimes \chi_1^{\gamma},$$
donc on a trouvé  que $(\pi, (G_1 \times H_1) \times (G_2 \times H_2))$ est une représentation de bigraphe.\\
(2) Il suffit de montrer que la représentation $\widetilde{\pi}$ satisfait à la propriété de sans multiplicité.
Comme $m_{G_1 \times H_1 \times G_2 \times H_2}(\widetilde{\pi}, (\pi_1 \otimes \chi_1) \otimes \pi_2 \otimes \chi_1^{\gamma}) \leq  m_{G_1\times G_2}(\pi, \pi_1 \otimes \pi_2)$, on obtient le résultat.
\end{proof}

\begin{rem}\label{ouvertmorphisme}
Si le morphisme précédent, $(G_1 \times H) \times (G_2 \times H) \longrightarrow G_1\times G_2 \times H$, se factorise par $(G_1 \times H) \times (G_2 \times H) \longrightarrow G_1H \times G_2H$,  où $G_i \times H \longrightarrow G_i H$ est un morphisme de groupes ouvert surjectif pour $i=1,2$, alors  le résultat dans le Lemme \ref{troisgroupesdebigrapheforte} est aussi vrai pour la représentation $(\pi, G_1H \times G_2H)$.
\end{rem}
\begin{proof}
Comme $p_i$ est un morphisme ouvert, toute représentation lisse du groupe  $G_iH$ est donnée par une représentation lisse du groupe $G_i \times H$ qui est triviale en sous-groupe $\ker(p_i)$. Cela entra\^ine le résultat.
\end{proof}

Soient $G_1, G_2$ des groupes localement compacts totalement discontinus, $H_1$ (resp. $H_2$) un sous-groupe distingué fermé de $G_1$ (resp. $G_2$). Supposons que  $G_i/H_i$ est soluble pour $i=1,2$. On supposera que toute représentation irréductible du groupe $G_i$ ou $H_i$ est admissible. Si $\pi_i \in \Irr(G_i)$, on supposera que $\pi_i|_{H_i}$ est aussi admissible.

\begin{thm}\label{abelienbigraphedefini}
Si $\Gamma$ un sous-groupe distingué fermé  de $G_1\times G_2$ contenant $H_1\times H_2$  tel que
$$\Gamma/{H_1\times H_2} \textrm{ est le graphe d'un morphisme bijectif } \gamma: G_1/H_1 \longrightarrow G_2/H_2.$$
Supposons que $(\rho, V)$ est une représentation de bigraphe forte du groupe $H_1 \times H_2$ qui se prolonge en une représentation du groupe $\Gamma$. Alors
$\pi=c-\Ind_{\Gamma}^{G_1\times G_2} \rho$ est aussi une représentation de bigraphe forte du groupe $G_1\times G_2$.
\end{thm}
\begin{proof}
C'est une consequence du Théorème \ref{abeliengraphedefini}.
\end{proof}
\begin{prop}\label{larelationdebigraphe}
Conservons  les notations et hypothèses du Théorème \ref{abelienbigraphedefini}. Soient $\pi_1 \in \Irr(G_1)$, $\pi_2 \in \Irr(G_2)$ tels que
$$ \pi_1=\oplus_{\sigma \in \mathcal{R}_{H_1}(\pi_1)} m_1 \sigma \textrm{ et } \pi=\oplus_{\delta \in \mathcal{R}_{H_2}(\pi_2)} m_2 \delta.$$
Supposons que $\pi_1\otimes \pi_2 \in \mathcal{R}_{G_1\times G_2}(\pi)$.
Alors\\
(1)  il existe une application bijective
 $$\theta_{\rho}: \mathcal{R}_{H_1}(\pi_1) \longrightarrow \mathcal{R}_{H_2}(\pi_2); \sigma_{\alpha} \longmapsto \delta_{\alpha}$$
telle que $\sigma_{\alpha} \otimes \delta_{\alpha} \in \mathcal{R}_{H_1 \times H_2}(\rho)$ et
$\sigma_{\alpha} \otimes \delta_{\beta} \notin \mathcal{R}_{H_1\times H_2}(\rho)$ pour $\alpha \neq \beta$.\\
(2) $m_1=1$ ssi $m_2=1$.
\end{prop}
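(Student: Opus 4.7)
Pour (1), j'imiterais la démarche du Lemme~\ref{principallemmedegraphe}(1). Par la réciprocité de Frobenius, $m_\Gamma(\rho, \pi_1 \otimes \pi_2) = m_{G_1\times G_2}(\pi, \pi_1\otimes\pi_2) \neq 0$, d'où a fortiori $m_{H_1 \times H_2}(\rho, \pi_1 \otimes \pi_2) \neq 0$. Il existe donc $\sigma_1 \otimes \delta_1 \in \mathcal{R}_{H_1 \times H_2}(\rho)$ avec $\sigma_1 \in \mathcal{R}_{H_1}(\pi_1)$ et $\delta_1 \in \mathcal{R}_{H_2}(\pi_2)$. Pour tout $\sigma_\alpha \in \mathcal{R}_{H_1}(\pi_1)$, la théorie de Clifford donne $t_\alpha \in G_1$ avec $\sigma_\alpha \simeq \sigma_1^{t_\alpha}$, et la bijectivité de $\gamma$ permet de trouver $s_\alpha \in G_2$ tel que $(t_\alpha, s_\alpha) \in \Gamma$; comme $\rho$ se prolonge à $\Gamma$, on obtient $\sigma_\alpha \otimes \delta_\alpha \in \mathcal{R}_{H_1 \times H_2}(\rho)$ en posant $\delta_\alpha := \delta_1^{s_\alpha}$. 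La propriété de graphe à gauche de $\rho$ garantit le caractère bien défini de $\theta_\rho$ et la disjonction $\sigma_\alpha \otimes \delta_\beta \notin \mathcal{R}_{H_1\times H_2}(\rho)$ pour $\alpha\neq\beta$; la surjectivité (obtenue symétriquement via $\gamma^{-1}$) et l'injectivité de $\theta_\rho$ proviennent de la propriété de graphe à droite.

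Pour (2), l'idée serait d'effectuer deux calculs de $\dim_\C \Hom_{H_1\times H_2}(\rho, \pi_1\otimes\pi_2)$ et de comparer via l'action naturelle du groupe $\Gamma/(H_1\times H_2)$. D'un côté, la propriété de bigraphe forte de $\pi$ (conséquence du Théorème~\ref{abelienbigraphedefini}) combinée à Frobenius donne $\dim_\C \Hom_\Gamma(\rho, \pi_1\otimes\pi_2) = 1$, qui est précisément le sous-espace fixe de cette action. De l'autre, par~(1) et la propriété de bigraphe forte de $\rho$, chaque composante $(\sigma_\alpha \otimes \delta_\alpha)$-isotypique contribue $m_1 m_2$ à la dimension (multiplicité $1$ dans $\rho$, multiplicité $m_1 m_2$ dans $\pi_1\otimes\pi_2$), et $\Gamma$ permute transitivement ces composantes (les $\sigma_\alpha$ étant tous $G_1$-conjugués à $\sigma_1$, et $\gamma$ étant bijective). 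Le stabilisateur d'une composante s'identifie, via le graphe de $\gamma$ restreint à $\tilde{H}_1/H_1$, au groupe abélien fini $A = \tilde{H}_1/H_1 \simeq \tilde{H}_2/H_2$.

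La principale difficulté technique sera d'expliciter l'action de $A$ sur l'espace de multiplicité $\C^{m_1} \otimes \C^{m_2}$ d'une composante: il s'agit du produit tensoriel de deux représentations projectives irréductibles $\tau_1$ et $\tau_2^\gamma$ de $A$, de dimensions respectives $m_1$ et $m_2$, issues des extensions de $\sigma_1$ à $\tilde{H}_1$ et de $\delta_1$ à $\tilde{H}_2$, et dont les cocycles se compensent puisque l'action composée est véritable. Le lemme de Schur appliqué aux représentations projectives irréductibles de même cocycle force alors $\tau_1 \simeq (\tau_2^\gamma)^*$ à partir de l'hypothèse $\dim_\C \Hom_A(1_A, \tau_1 \otimes \tau_2^\gamma) = 1$, ce qui donne en particulier $m_1 = \dim\tau_1 = \dim\tau_2 = m_2$, donc $m_1 = 1 \Leftrightarrow m_2 = 1$.
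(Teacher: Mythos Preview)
Pour la partie~(1), ta démarche est essentiellement celle de l'article.

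Pour la partie~(2), ton approche diffère nettement de celle de l'article, qui procède par récurrence sur la longueur dérivée de $G_1/H_1$ en reprenant le dévissage de la preuve du Théorème~\ref{abeliengraphedefini} : à chaque étage abélien $G_i/G_i'$, on utilise que $\pi_i|_{\tilde H_i}$ est sans multiplicité (Théorème~\ref{cliffordadmissible}(5)(ii)) et que l'application de Howe $\theta_\rho$ est injective pour transporter la distinction des $\sigma_{\alpha i}$ vers celle des $\delta_{\alpha i}$. Ton argument, lui, est direct et repose sur la théorie de Clifford \og projective\fg{} : l'espace de multiplicité $M_i=\Hom_{H_i}(\sigma_1,\pi_i)$ (resp.\ $\Hom_{H_2}(\delta_1,\pi_2)$) porte une représentation projective \emph{irréductible} $\tau_i$ du groupe d'inertie $A$, et l'égalité $\dim\big(\Hom_{H_1\times H_2}(\rho,\pi_1\otimes\pi_2)\big)^{\Gamma/(H_1\times H_2)}=1$ force, via Schur sur une extension centrale convenable, $\tau_1\simeq(\tau_2^\gamma)^*\otimes(\text{caractère})$, d'où $m_1=m_2$ --- ce qui est plus fort que l'énoncé. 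C'est une jolie alternative.

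Deux remarques de rigueur. D'abord, tu affirmes que $A=\tilde H_1/H_1$ est \og abélien fini\fg{} : sous les hypothèses du Théorème~\ref{abelienbigraphedefini}, $G_1/H_1$ n'est que résoluble, donc $A$ n'a aucune raison d'être abélien ni fini. Heureusement ton argument n'en a pas besoin : les $\tau_i$ sont de dimension finie ($m_i$), Schur s'applique donc sans hypothèse sur $A$. Ensuite, dans l'identification $\Hom_{H_1\times H_2}(\rho,V_1\otimes V_2)\simeq L\otimes M_1\otimes M_2$ avec $L=\Hom_{H_1\times H_2}(\rho,\sigma_1\otimes\delta_1)$ de dimension~$1$, le facteur $L$ porte lui aussi une action (projective) de $A$ qu'il faut mentionner ; comme $\dim L=1$, son cocycle est un cobord et il s'absorbe par torsion de $\tau_2^\gamma$ par un caractère, ce qui ne change pas la dimension. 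Enfin, l'irréductibilité de $\tau_i$ comme représentation projective de $A$ est de la théorie de Clifford standard mais dépasse l'énoncé du Théorème~\ref{cliffordadmissible} (restreint au cas $G/H$ abélien) ; il faudrait l'invoquer explicitement.
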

\begin{proof}
(i) Comme $\pi_1 \otimes \pi_2 \in \mathcal{R}_{G_1 \times G_2}(\pi)$, par le Théorème Réciproque de Frobenius,
on obtient $\mathcal{R}_{\Gamma}(\pi_1 \otimes \pi_2)\cap \mathcal{R}_{\Gamma}(\rho) \neq \emptyset$, a fortiorie,
$\mathcal{R}_{H_1\times H_2}(\pi_1 \otimes \pi_2) \cap \mathcal{R}_{H_1 \times H_2}(\rho) \neq \emptyset$.
On peut supposer que $\sigma_1 \otimes \delta_1 \in \mathcal{R}_{H_1 \times H_2}(\pi_1 \otimes \pi_2) \cap \mathcal{R}_{H_1 \times H_2}(\rho) $.
D'après la théorie de Clifford, il existe un élément $t_{\alpha} H_1$ de $G_1/{H_1}$ tel que $\sigma_1^{t_{\alpha}} \simeq \sigma_{\alpha}$ où $\sigma_1^{t_{\alpha}}(h_1):= \sigma_1(t_{\alpha}^{-1}h_1 t_{\alpha})$. Soit $\gamma(t_{\alpha} H_1)=s_{\alpha}H_2 \in G_{2}/{H_2}$; alors $(t_{\alpha}, s_{\alpha}) H_1 \times H_2 $ appartient à $\Gamma/{H_1 \times H_2}$. Supposons que $(t_{\alpha}, s_{\alpha})\in \Gamma$, on note $\delta_{\alpha}=\delta_1^{s_{\alpha}}$. Donc
$$\sigma_{\alpha} \otimes \delta_{\alpha} = \sigma_1^{t_{\alpha}} \otimes \delta_1^{s_{\alpha}} \in \mathcal{R}_{H_1 \times H_2}\big( (\pi_1 \otimes \pi_2)^{(t_{\alpha}, s_{\alpha})}\big) \cap \mathcal{R}_{H_1 \times H_2}(\rho^{(t_{\alpha}, s_{\alpha})})= \mathcal{R}_{H_1 \times H_2}(\pi_1 \otimes \pi_2) \cap \mathcal{R}_{H_1\times H_2}(\rho).$$
Comme $\rho|_{H_1 \times H_2}$ satisfait à la propriété de bigraphe, on sait que $\delta_{\alpha} \ncong \delta_{\beta}$ si $\alpha \neq \beta $. D'autre part, si on choisit un élément $\delta_{\beta}$ de $\mathcal{R}_{H_2}(\pi_2)$, il aussi existe un élément $\sigma_{\beta}$ tel que $\sigma_{\beta} \otimes \delta_{\beta} \in \mathcal{R}_{H_1 \times H_2}(\rho)$, donc on obtient une application bijective $\theta_{\rho}$. Comme $\rho|_{H_1\times H_2}$ satisfait à la propriété de bigraphe, on trouve
$\sigma_{\alpha} \otimes \delta_{\beta} \notin \mathcal{R}_{H_1 \times H_2}(\rho)$ pour $\alpha \neq \beta$.\\
(ii) On suppose $m_1=1$. On procède par récurrence sur le cardinal de $G_1/{H_1}$.
 Reprenons les notations dans la démonstrations du Théorème \ref{abeliengraphedefini}(cf. la deuxième étape). D'après le Théorème \ref{cliffordadmissible}(4),  on suppose $\pi_1|_{G_1'}= \oplus_{\sigma_{\alpha}' \in \mathcal{R}_{G_1'}(\pi_1)} \sigma_{\alpha'}$ et $\pi_2|_{G_2'}=\oplus_{\delta_{\beta}' \in \mathcal{R}_{G_2'}(\pi_2)}  \delta_{\beta}'$. De plus, on suppose $\sigma_{\alpha}'\otimes \delta_{\beta}' \in \mathcal{R}_{G_1' \times G_2'}(\rho')$ ssi $\alpha=\beta$. Par récurrence, pour chaque indice fixé $\alpha$, on a $\sigma_{\alpha}'= \oplus_{\sigma_{\alpha i}\in \mathcal{R}_{H_1}(\sigma_{\alpha}')} \sigma_{\alpha i}$ et $\delta_{\beta}'=\oplus_{\delta_{\beta j} \in \mathcal{R}_{H_2}(\delta_{\beta }')} \delta_{\beta j}$. On suppose $\theta_{\rho}(\sigma_{\alpha i})=\delta_{\alpha i}$ où $\theta_{\rho}$ est l'application de Howe de $\rho$. Par hypothèse, $\sigma_{\alpha i} \ncong \sigma_{\beta j}$ si $\alpha \neq \beta$ ou $i\neq j$. Cela en résulte que $\delta_{\alpha i}=\theta_{\rho}(\sigma_{\
alpha 
i}) \ncong \delta_{st}=\theta_{\rho}(\sigma_{st})$ si $s\neq \alpha $ ou $t\neq i$, d'où le résultat.
\end{proof}

\begin{prop}
Conservons  les notations et hypothèses du Théorème \ref{abelienbigraphedefini} sauf que $G_1/H_1$ est soluble. \\
  Si $G_1/H_1$ est un groupe abélien d'ordre fini,  $\chi^k \in \Irr(\Gamma/H_1 \times H_2)$ pour $k=1, \cdots, n$, on note  $\pi^k=c-\Ind_{\Gamma}^{G_1\times G_2} (\rho \otimes \chi^k)$.
\begin{itemize}
\item[(1)] $\pi^k$ est aussi une représentation de bigraphe forte du groupe $G_1\times G_2$.
\item[(2)] Si on pose $\mathcal{R}_{G_1 \times G_2}(\pi^k)|_{H_1 \times H_2}: =\{ \sigma_1 \otimes \sigma_2| $ il existe un élément $\pi_1 \otimes \pi_2 \in \mathcal{R}_{G_1 \times G_2}(\pi^k)$ tel que $\sigma_i\in \mathcal{R}_{H_i}(\pi_i)$ et $\sigma_1 \otimes \sigma_2 \in \mathcal{R}_{H_1 \times H_2} ( \rho) \}$, alors $\mathcal{R}_{H_1 \times H_2} (\rho)=\cup_{k=1}^n \mathcal{R}_{G_1 \times G_2}( \pi^k)|_{H_1 \times H_2}$.
\end{itemize}
\end{prop}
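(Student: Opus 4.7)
\emph{Plan de démonstration.} Mon plan est de déduire le point (1) directement du Théorème~\ref{abelienbigraphedefini} appliqué à $\rho \otimes \chi^k$. Puisque $\chi^k$ est un caractère de $\Gamma/{H_1 \times H_2}$, il est trivial sur $H_1 \times H_2$, et donc $\rho \otimes \chi^k$ est une extension à $\Gamma$ de la représentation de bigraphe forte $\rho$ ; le théorème s'applique tel quel et livre immédiatement que $\pi^k = c\!-\!\Ind_\Gamma^{G_1 \times G_2}(\rho \otimes \chi^k)$ est une représentation de bigraphe forte du groupe $G_1 \times G_2$.

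Pour le point (2), l'inclusion $\supseteq$ est lue sur la définition. Pour l'inclusion réciproque, l'étape charnière sera d'établir la décomposition de $\Gamma$-modules
$$\Ind_{H_1 \times H_2}^{\Gamma} \rho \simeq \bigoplus_{k=1}^{n} \rho \otimes \chi^k,$$
en utilisant la formule de projection et la décomposition de la représentation régulière du groupe abélien fini $\Gamma/{H_1 \times H_2}$ en somme de ses caractères ; par transitivité de l'induction, on en déduit $c\!-\!\Ind_{H_1 \times H_2}^{G_1 \times G_2} \rho \simeq \bigoplus_{k=1}^{n} \pi^k$. Étant donné $\sigma_1 \otimes \sigma_2 \in \mathcal{R}_{H_1 \times H_2}(\rho)$, je prendrai pour $\pi_i$ un quotient irréductible de $c\!-\!\Ind_{H_i}^{G_i} \sigma_i$ : puisque $G_i/H_i$ est fini et $\sigma_i$ admissible, cette induite est admissible de type fini, donc de longueur finie par le Théorème~\ref{longueurfiniedeHowe}, et possède ainsi un tel quotient. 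La réciprocité de Frobenius et la théorie de Clifford (Théorème~\ref{cliffordadmissible}) donnent alors $\sigma_i \in \mathcal{R}_{H_i}(\pi_i)$. Enfin, la réciprocité de Frobenius, applicable car $H_1 \times H_2$ est ouvert d'indice fini dans $G_1 \times G_2$, fournit
$$\Hom_{G_1 \times G_2}\Big(\bigoplus_{k=1}^{n} \pi^k,\ \pi_1 \otimes \pi_2\Big) \simeq \Hom_{H_1 \times H_2}\big(\rho,\ (\pi_1 \otimes \pi_2)|_{H_1 \times H_2}\big),$$
qui est non nul puisque $\sigma_1 \otimes \sigma_2$ apparaît dans $(\pi_1 \otimes \pi_2)|_{H_1 \times H_2}$ par semi-simplicité de Clifford. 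Il existe donc un indice $k$ tel que $\pi_1 \otimes \pi_2 \in \mathcal{R}_{G_1 \times G_2}(\pi^k)$, d'où $\sigma_1 \otimes \sigma_2 \in \mathcal{R}_{G_1 \times G_2}(\pi^k)|_{H_1 \times H_2}$.

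L'étape technique la plus délicate sera la vérification soignée de l'isomorphisme $\Ind_{H_1 \times H_2}^{\Gamma} \rho \simeq \bigoplus_{k=1}^{n} \rho \otimes \chi^k$ au niveau des représentations lisses, ainsi que la commutation avec $c\!-\!\Ind_{\Gamma}^{G_1 \times G_2}$ pour identifier $\bigoplus_k \pi^k$ ; mais le fait que $\Gamma/{H_1 \times H_2}$ soit abélien fini et que $H_1 \times H_2$ soit ouvert d'indice fini dans $\Gamma$ ramène cette vérification à un calcul standard sur les groupes finis.
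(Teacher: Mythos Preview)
Your proof is correct. For (1) you argue exactly as the paper does. For (2), however, you take a genuinely different and somewhat cleaner route than the paper.

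The paper works at the level of $\Gamma$: given $\sigma_1 \otimes \sigma_2 \in \mathcal{R}_{H_1 \times H_2}(\rho)$, it decomposes $\Ind_{H_1 \times H_2}^{\Gamma}(\sigma_1 \otimes \sigma_2) \simeq \bigoplus_i \widetilde{\sigma}^0 \otimes \chi_i$ using the multiplicity-one property $m_{\Gamma}(\rho,\widetilde{\sigma}^0)=1$, then observes that any $\pi_1 \otimes \pi_2$ whose restriction contains $\sigma_1 \otimes \sigma_2$ must contain some $\widetilde{\sigma}^0 \otimes \chi^k$ on restriction to $\Gamma$, and concludes via Frobenius reciprocity at $\Gamma$. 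You instead decompose the induced representation of $\rho$ itself: the projection-formula identity $\Ind_{H_1 \times H_2}^{\Gamma}\rho \simeq \bigoplus_k \rho \otimes \chi^k$ and transitivity give $c\!-\!\Ind_{H_1 \times H_2}^{G_1 \times G_2}\rho \simeq \bigoplus_k \pi^k$ in one stroke, and then a single application of Frobenius reciprocity at $H_1 \times H_2$ finishes. Your approach avoids invoking the multiplicity-one constituent $\widetilde{\sigma}^0$ and is conceptually more transparent; the paper's approach, on the other hand, yields the slightly stronger statement that \emph{every} $\pi_1 \otimes \pi_2$ containing $\sigma_1 \otimes \sigma_2$ lies in some $\mathcal{R}_{G_1 \times G_2}(\pi^k)$, not just the particular one you construct. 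Both are valid for the stated proposition.
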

\begin{proof}
(1) Comme $\rho\otimes \chi $ aussi prolonge $\rho\otimes \chi|_{H_1\times H_2}=\rho|_{H_1\times H_2}$, on obtient le résultat.\\
(2) Si $\sigma_1 \otimes \sigma_2 \in \mathcal{R}_{H_1 \times H_2}(\rho)$, on a
$$1= m_{H_1\times H_2}(\rho, \sigma_1\otimes \sigma_2) = m_{\Gamma}\big(\rho, \Ind_{H_1\times H_2}^{\Gamma} (\sigma_1 \otimes \sigma_2)\big).$$
Cela implique que
$$\Ind_{H_1\times H_2}^{\Gamma} (\sigma_1 \otimes \sigma_2) = \oplus_{\chi_i \in \Irr(\Gamma/{H_1\times H_2})} \widetilde{\sigma}^0 \otimes \chi_i,$$
où $\widetilde{\sigma}^0 $ est la représentation irréductible unique du groupe $\Gamma$ telle que
$$m_{\Gamma}(\rho, \widetilde{\sigma}^0 )=1.$$
Pour chaque $\pi_1\otimes \pi_2\in  \Irr(G_1\times G_2)$ tel que $ \pi_1\otimes \pi_2|_{H_1\times H_2}$ continent $\sigma_1\otimes \sigma_2$, il existe $\chi^k \in \Irr(\Gamma/{H_1 \times H_2})$ tel que $\pi_1\otimes \pi_2|_{\Gamma}$ contient $\widetilde{\sigma}^0 \otimes \chi^k$. Donc
$$m_{G_1\times G_2}(\pi^k, \pi_1\otimes \pi_2) = m_{G_1\times G_2}(c-\Ind_{\Gamma}^{G_1\times G_2} \rho\otimes \chi^k, \pi_1\otimes \pi_2)= m_{\Gamma}(\rho \otimes \chi^k, \pi_1\otimes \pi_2)$$
 $$\geq  m_{\Gamma}(\rho\otimes \chi^k, \widetilde{\sigma}^0\otimes \chi^k)=m_{\Gamma}(\rho, \widetilde{\sigma}^0 )=1.$$
 Donc on a montré que
 $$\mathcal{R}_{H_1 \times H_2} (\rho)=\cup_{k=1}^n \mathcal{R}_{G_1 \times G_2}( \pi^k)|_{H_1 \times H_2}.$$
 \end{proof}

\section{Les paires duales de Howe pour $Sp(W)$}\label{howeduale}
Notre reférence dans cette sous-section est \cite{MVW}.
Dans cette sous-section, on désigne par $F$ un corps non archimédien de caractéristique impaire et de  caractéristique résiduelle $p $.
\begin{defn}
Soit $G$ un groupe. Un sous-groupe $H \subset G$ tel que le double commutant de $H$ dans $G$ soit égal à $H$ sera appelé  \textbf{un sous-groupe de Howe}  de $G$. Si $H'=Z_{G}(H)$ est le commutant de $H$ dans $G$, on dira que $(H,H')$ est \textbf{une paire duale} dans $G$.
\end{defn}

Soit $D$ un corps de dimension finie sur son centre $F$(pas nécessairement commutatif), muni d'une involution $\tau$. Soit $W$ un espace vectoriel à droite  sur $D$, de dimension $n$, muni d'un produit $\epsilon$-hermitien, où $\epsilon \tau(\epsilon)=1$.  $W$ est dit de \textbf{type 2}(resp.  de \textbf{type 1})  si le produit hermitien est nul(resp.  non dégénéré).
\begin{defn}
Soit $W$ un $D$-espace à droite $\epsilon$-hermitien de dimension finie de type 1 ou 2, une paire duale $(H, H')$ de $U(W)$ est dite \textbf{réductive} si
\begin{itemize}
\item[(i)] $W$ est $HD$ et $H'D$-semi-simple.
\item[(ii)] $H$ et $H'$ sont réductifs.
\end{itemize}
(Ces deux conditions sont probablement équivalentes). On dit alors que $H$ est \textbf{un sous-groupe de Howe réductif} de $U(W)$.
\end{defn}

\begin{defn}
Une paire duale $(H,H')$ de $U(W)$ est dite irréductible s'il n'existe pas de décomposition orthogonale de $W$ stable par $HH'D$.
\end{defn}

\begin{lemme}
Toute paire réductive duale de $U(W)$ est produit de paires réductives duales irréductibles.
\end{lemme}
\begin{proof}
Ceci découle de [\cite{MVW} Pages, 10-11]
\end{proof}

Ci-dessous, nous citons la classification des paires duales irréductibles de $Sp(2n,F)$ dans  [\cite{MVW}, Page 15].
Soit $t_{D/F}\in \Hom_F(D,F)$ tel que la forme bilinéaire $(d,d') \longmapsto t_{D/F}(dd')$ pour $d,d' \in D$ soit non dégénérée.

\begin{lemme}\label{leproduittensoriel}
Si $(W_1, \langle, \rangle_1)$, $(W_2, \langle, \rangle_2)$ sont deux espaces sur $D$ respectivement à droite et à gauche, $\epsilon_i$-hermitiens de type 1 tels que $-1= \epsilon_1 \epsilon_2$, alors le produit tensoriel $W=W_1\otimes_D W_2$ muni de la forme
$$<< w_1 \otimes w_2, w_1'\otimes w_2' >>= t_{D/F}(< w_1, w_1'>_1 \tau(< w_2, w_2'>_2)), \quad w_i, w_i' \in W_i$$
est symplectique. \underline{Inversement}, toute décomposition d'un espace symplectique $(W, <, >)$ en produit tensoriel hermitien est de ce type.
\end{lemme}
\begin{proof}
Ceci découle de [\cite{MVW}, Pages 14-15, Lemme].
\end{proof}

\begin{rem}
\begin{itemize}
\item[(1)]  Soient $(W, \langle, \rangle)$ un espace anti-hermitien sur $(D, \tau)$, $t_{D/F}$ un élément de $\Hom_F(D,F)$ tels que $(d,d') \longmapsto t_{D/F}(dd')$ est non dégénérée, on obtient une forme symplectique $t_{D/F}(\langle, \rangle)$ sur le $F$-espace $W$, l'espace symplectique $(W, t_{D/F}(\langle, \rangle))$ sera dit
\textbf{déduit} de $(W, \langle, \rangle )$ et $t_{D/F}$ par restriction des scalaires.
\item[(2)] Supposons l'espace $W'$ déduit de $W$ par restriction des scalaires par une extension $F'/F$. Par la classification des sous-groupes de Howe réductifs des groupes classiques dans  [\cite{MVW}, Page 13], on sait qu'une paire duale non triviale de $Sp(W)$ reste une parie  duale non triviale de $Sp(W')$. Comme $\Cent_{Sp(W')}(\{\pm 1\})= Sp(W')$, on voit que la paire duale triviale $(\{\pm1\}, Sp(W))$ ne peut pas  \^etre une paire duale du groupe $Sp(W')$.
\end{itemize}
\end{rem}

\begin{thm}\label{MVWclassificationdepaires}
Voici la classification des paires duales irréductibles de $Sp(W)$ qui ne proviennent pas par restriction des scalaires de $Sp(W')$ satisfaisant à $(\dim_{F'}W') [F':F]=\dim_FW$ pour une extension $F'$ sur $F$.
\begin{itemize}
\item[(1)] Les paires duales de type $2$: $(GL_D(X_1), GL_D(X_2))$.\\
Si $W$ est totalement isotrope et non dégénéré pour toute décomposition d'un lagrangien $X$ de $V$ en produit tensoriel $X= X_1 \otimes_D X_2$.\\
\item[(2)] Les paires duales de type 1:
\begin{itemize}
\item[(i)] $(O(W_1), Sp(W_2))$.\\
Ici $W \simeq W_1 \otimes_F W_2$ avec $W_1$(resp.  $W_2$) un espace orthogonal (resp.  symplectique) sur $F$.
\item[(ii)] $(U(W_1), U(W_2))$.\\
Ici $W\simeq W_1\otimes_D W_2$ où $D/F$ est une extension quadratique ou un corps de quaternions muni de l'involution canonique. $W_1$(resp.  $W_2$) est un espace hermitien (resp.  anti-hermitien) sur $D$ et $\dim_D W_2 \neq 1$ si $D$ est un corps de quaternions.
\end{itemize}
\end{itemize}
\end{thm}
\begin{proof}
Ceci découle de \cite{MVW}.
\end{proof}
Soit $W$ un $F$-espace vectoriel de dimension $2n$, muni d'une forme symplectique $\langle, \rangle$. 
On note $\overline{Sp}(W)$(resp. $\widetilde{Sp}(W)$) le groupe métaplectique associé à l'extension de centre  
$\mu_8=\langle e^{\frac{2\pi i}{8}}\rangle$( resp. $\C^{\times}$). Si $H$ est un sous-groupe fermé de $Sp(W)$,
 nous  notons son image réciproque dans $\overline{Sp}(W)$ (resp. $\widetilde{Sp}(W)$) par $\overline{H}$( resp. $\widetilde{H}$).\\

Supposons $W= W_1\oplus W_2$ une somme orthogonale. On considère l'application canonique
$$Sp(W_1) \times Sp(W_2) \stackrel{p}{\longrightarrow } Sp(W).$$
On note $\overline{Sp(W_1)\times Sp(W_2)}$ l'image réciproque de $p(Sp(W_1) \times Sp(W_2))$ dans $\overline{Sp}(W)$.
\begin{lemme}\label{morphismedegroupes}
On a un morphisme de groupes
$$\overline{Sp}(W_1) \times \overline{Sp}(W_2) \stackrel{\overline{p}}{\longrightarrow} \overline{Sp(W_1) \times Sp(W_2)},$$
$$ [(g_1, \epsilon_1), (g_2, \epsilon_2)]  \longmapsto [(g_1,g_2), \epsilon_1\epsilon_2 c_{Rao}((g_1, 1), (1, g_2))],$$
où $c_{Rao}$ est le $2$-cocycle de Rao[cf. \cite{MVW}], et où on écrit $(g_1,g_2)$ pour l'image $p(g_1, g_2)$ dans $Sp(W)$. En particulier, en considèrant $\overline{p}|_{\overline{Sp}(W_1)}$ et $\overline{p}|_{\overline{Sp}(W_2)}$, on obtient
$$c_{Rao}(g_1,g_1')=c_{Rao}((g_1, 1), (g_1', 1))$$ et
$$c_{Rao}(g_2,g_2')=c_{Rao}((1,g_2), (1, g_2'))$$
pour $g_1, g_1'\in Sp(W_1)$, $g_2, g_2'\in Sp(W_2)$.
\end{lemme}
\begin{proof}
Ceci découle de [\cite{HanM}, Pages 245-246].
\end{proof}

\begin{thm}\label{scindagesurlepaireduale}
Soit $(H_1, H_2)$ une paire duale irréductible de $Sp(W)$, soit $\overline{H_1}$ l'image réciproque dans $\overline{Sp}(W)$. Alors l'extension $\overline{H_1}$ de $H_1$ est scindée, sauf si $W$ est de la forme  $W_1\otimes_{F'} W_2$  et $H_1=Sp(W_1)$, où $W_1$ est un espace symplectique sur une extension $F'$ de $F$ et où $W_2$ est de dimension impaire.
\end{thm}
\begin{proof}
C'est une variante du théorème de Vigneras dans \cite{MVW}, on peut voir \cite{thesiswang} par les details.
\end{proof}
\section{La représentation de Weil}
Notre reférence dans cette sous-section est \cite{MVW}. Dans cette sous-section, on désigne par $F$ un corps non archimédien de caractéristique impaire.\\
Soit $W$ un espace vectoriel de dimension $2n$ sur $F$, muni d'une forme symplectique non dégénérée $\langle, \rangle$.
Le groupe d'Heisenberg associé à $(W, \langle, \rangle)$, est l'ensemble $W\times F$, muni de la topologie produit et de la loi de groupe
$$(w, t) (w',t'):= (w+w', t+ t' + \langle w, w' \rangle/2).$$
Notons $\xi: F \longrightarrow H$ le monomorphisme $t \longmapsto (0, t)$. Nous  fixons un caractère non trivial $\psi: F \longrightarrow \C^{\times}$.
\begin{thm}[ Stone, Von Neumann]\label{stoneVonNeumann}
A isomorphisme près, il existe une et une seule représentation lisse irréductible de $H$, notée par $\rho_{\psi}$, telle que
$$\rho_{\psi}\circ \xi(t)=\psi(t)  \textrm{ pour tout } t \in F.$$
\end{thm}
\begin{proof}
Ceci découle de [\cite{MVW}, Pages 28-29].
\end{proof}
Soit $A$ un sous-groupe fermé de $W$, posons
$$A^{\perp}= \{ w\in W, \textrm{ pour tout } a \in A, \psi(\langle w, a\rangle)=1 \}.$$
On note
$$\mathcal{A}=\{ A | A  \textrm{ un sous-groupe fermé de } W  \textrm{ tel que } A=A^{\perp} \}.$$
\begin{thm}[\cite{MVW}]
Pour chaque $A \in \mathcal{A}$, la représentation $c-\Ind_{A \times F^{\times}}^{H} 1 \cdot \psi$ réalise la représentation unique $\rho_{\psi}$ de $H$, associée à $\psi$, dans le Théorème \ref{stoneVonNeumann}.
\end{thm}
\begin{proof}
Voir [\cite{MVW}, Pages 28-29].
\end{proof}
Soit $(\rho_{\psi}, S)$  un modèle par $\rho_{\psi}$.  On pose
$$\widetilde{Sp_{\psi}}(W)=\{(g,M_g)| g\in Sp(W); M_g\rho_{\psi}(h)M_g^{-1} =\rho_{\psi}(gh) \textrm{ pour tout }h\in H\},$$
qui est un sous groupe topologique de $Sp(W) \times GL(S)$. On a aussi une suite exacte
$$ \quad 1\longrightarrow \C^{\times} \longrightarrow \widetilde{Sp}_{\psi}(W) \stackrel{p_{W}}{\longrightarrow} Sp(W) \longrightarrow 1$$
et il existe un unique isomorphisme
$$I: \widetilde{Sp}(W) \longrightarrow  \widetilde{Sp}_{\psi}(W)$$
tel que  le diagramme
\[\begin{CD}
1 @>>> \C^{\times} @>>> \widetilde{Sp}(W) @>p>> Sp(W) @>>> 1\\
  @.  @|  @V\sim VIV @| \\
1 @>>> \C^{\times} @>>> \widetilde{Sp}_{\psi}(W) @>p_W>> Sp(W)@>>> 1
\end{CD}\]
est commutatif.
Le composé de l'isomorphisme $I$ avec la projection $\widetilde{Sp}_{\psi}(W) \longrightarrow GL(S)$ donne une représentation du groupe
métaplectique, qu'on appelle souvent \textbf{la représentation de Weil}, liée à $\psi$, qu'on note $\omega_{\psi}$.
Pour utiliser la théorie des représentations des groupes localement compacts totalement discontinus, nous considérons par la suite la représentation de Weil du groupe métaplectique $Mp(W)$ qui est égal à $\overline{Sp}(W)$  sauf mention explicite.\\
La représentation de Weil vérifie les propriétés élémentaires ci-dessous[cf. \cite{MVW}, Pages 35-36]:\\
(1) $\omega_{\psi}$ est une représentation lisse admissible.\\
(2) $\omega_{\psi}= \omega_{\psi}^+ \oplus \omega_{\psi}^{-}$, où $\omega_{\psi}^{\mp} \in \Irr(Mp(W))$.\\
(3) La contragrédiente de $\omega_{\psi}$ est $\omega_{\psi^-}$.\\
\ \\
On définit un groupe $H \rtimes \overline{Sp}(W)$ sous la loi suivant:
$$((w_1,t_1), (g_1, \epsilon)) \cdot ((w_2,t_2), (g_2, \epsilon)):=((w_1, t_1) + (g_1 w_2, t_2), (g_1, \epsilon_1)\cdot (g_2, \epsilon_2)),$$
où $(w_i,t_i) \in H$ et $(g_i, \epsilon_i)\in \overline{Sp}(W)$.\\

$(\star \star)$ En effet, la représentation unique $(\rho_{\psi}, S)$ du groupe $H$ peut se prolonger dans $H\rtimes \overline{Sp}(W)$, notée par $\Omega_{\psi}$, alors $\Omega_{\psi}|_{\overline{Sp}(W)} \simeq \omega_{\psi}$ est la représentation de Weil associée à  $\psi$.\\

En utilisant le point $(\star \star)$ ci-dessus, pour chaque $A \in \mathcal{A}$, si on note $Stab_{A}(Sp(W))= G_A$ et $\overline{G_A}$ l'image  réciproque de $G_A$ dans $\overline{Sp}(W)$, alors la représentation  $\Omega_{\psi, H\rtimes \overline{G_A}}=c-\Ind_{A\rtimes \overline{G_A}}^{H\rtimes \overline{G_A}} 1 \cdot \psi \cdot 1$ est une représentation du groupe $H \rtimes \overline{G_A}$   satisfaisant  $\Omega_{\psi, H\rtimes \overline{G_A}}|_H \simeq \rho_{\psi}$. Donc la représentation $\Omega_{\psi,H \rtimes \overline{G_A}}|_{\overline{G_A}}$ peut prolonger en la représentation de Weil du groupe $\overline{Sp}(W)$.\\
Si on prend $A=$ un lagrangien maximal, alors on obtient un modèle de Sch\"odinger réalisé la représentation $\omega_{\psi}$.\\
Si on prend $A=$ un réseau tel que $A=A^{\perp}$, alors on  obtient un modèle latticiel réalisé $\omega_{\psi}$.\\

Soient $(H_1, H_2)$ une paire réductive  duale de $Sp(W)$, et  $\overline{H_1}, \overline{H_2}$ leurs images réciproques dans $\overline{Sp}(W)$, on voit que $\overline{H_1}$,$\overline{H_2}$  sont commutant  l'un de l'autre. On note $\mathcal{R}_{\omega_{\psi}}(\overline{H})=\{ \overline{\pi} \in \Irr(\overline{H}) | \textrm{ tel que } \Hom_{\overline{H}}(\omega_{\psi},  \overline{ \pi} ) \neq 0\}$  pour $H$ un sous-groupe fermé de $Sp(W)$.
\begin{thm}[Howe, Waldspurger]\label{howecorrespondanceclassique}
Si $F$ est local non archimédien de caractéristique résiduelle $\neq 2$, alors $\mathcal{R}_{\omega_{\psi}}(\overline{H_1\times H_2})$  est le bigraphe forte d'une bijection entre $\mathcal{R}_{\omega_{\psi}}(\overline{H_1})$  et $\mathcal{R}_{\omega_{\psi}}(\overline{H_2})$.
\end{thm}
\begin{rem}
Si les groupes $ \overline{H_i}$  satisfont aux conditions: $ \overline{H_i} \simeq H_i \times \mu_8$, par la définition de la représentation de Weil du groupe $\overline{Sp}(W)$,  on sait que
si $\overline{\pi_i}\in \mathcal{R}_{\omega_{\psi}}(\overline{H_i})$, il faut que $\overline{\pi_1} \simeq \pi_i \otimes \id_{\mu_8}$ pour telle représentation $\pi_i \in \Irr(H_i)$ et \underline{l'unique} représentation $\id_{\mu_8}$ du groupe $\mu_8$, donc $\omega_{\psi}|_{H_1 \times H_2}$ est aussi une représentation de bigraphe forte.
\end{rem}
\section{Un sous-groupe intermédiaire $\Gamma$ de $Mp(W)$}
Dans cette sous-section, on désigne par $F$ un corps non archimédien de caractéristique impaire. \\

Soient $D$ un corps, muni d'une involution $\tau$, $F$ le corps commutatif formé des points fixés de $\tau$, $W$ un espace vectoriel de dimension finie sur $D$, muni d'un produit $\epsilon$-hermitien $\langle, \rangle$( où $\epsilon=\pm 1$). On désigne par $U(W)$ le groupe unitaire de $(W, \langle, \rangle)$ et par $GU(W)$ le groupe de similitudes unitaires de $(W, \langle, \rangle)$. On note $\mathbb{G}_m$ le groupe algébrique multiplicatif.

On peut regarder $U(W)$ ou $GU(W)$ comme schémas en groupes de la façon suivante: on désigne par $\underline{Alg_F}$ la catégorie des $F$-algèbres commutatives unitaires, par $\underline{Gr}$ la catégorie des groupes. Soit $A$ un élément de $\underline{Alg_F}$, on note $D_A=D\otimes_FA$, $W_A=W\otimes_FA$. $D_A$ est une $F$-algère munie d'une  $\tau$-involution: $\tau(d\otimes a)=\tau(d)\otimes a$, pour $d\in D, a\in A$ et le $A_D$-module $W_A$ est muni d'une structure  $\epsilon$-hermitienne: $\langle w_i\otimes a_i, w_j \otimes a_j\rangle_A= \langle w_i, w_j \rangle \otimes a_ia_j$.  Alors $\textbf{GU}(W)$(resp.  $\textbf{U}(W)$) est le foncteur de $\underline{Alg_F}$ dans $\underline{Gr}$ défini par
$$\textbf{GU}(W) (A)= \{ g\in GL(W_A)| \langle gw_a, gw_a'\rangle_A = \lambda(g) \langle w_a, w_a'\rangle_A \textrm{ pour tous } w_a, w_a' \in W_A \textrm{ et un élément } \lambda(g) \textrm{ de } D_A^{\times}\}$$
(resp.  $\textbf{U}(W)(A)=\{ g\in GL(W_A)| \langle g w_a, gw_a' \rangle_A=\langle w_a, w_a'\rangle_A \textrm{ pour tous } w_a, w_a' \in W_A \}$).

On sait que $\textbf{U}(W)$, $\textbf{GU}(W)$ sont des $F$-groupes algébriques. Par définition, on a $U(W)=\textbf{U}(W)(F)$ et $GU(W)=\textbf{GU}(W)(F)$.
\begin{prop}
On a une suite exacte des schémas en groupes
$$1 \longrightarrow \textbf{U}(W) \longrightarrow \textbf{GU}(W) \longrightarrow \mathbb{G}_m \longrightarrow 1.$$
\end{prop}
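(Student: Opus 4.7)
Mon plan est le suivant. L'exactitude en $\textbf{U}(W)$ et en $\textbf{GU}(W)$ est immédiate par les définitions : le foncteur $\textbf{U}(W)$ est précisément le sous-foncteur de $\textbf{GU}(W)$ défini par la condition $\lambda(g)=1$, donc $\textbf{U}(W)=\ker(\lambda)$ comme schémas en groupes sur $F$. Il reste donc à établir la surjectivité du caractère de similitude $\lambda : \textbf{GU}(W) \longrightarrow \mathbb{G}_m$ comme morphisme de faisceaux fppf.

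Je réduirais ce point à la surjectivité sur les points géométriques. Comme $\textbf{GU}(W)$ et $\mathbb{G}_m$ sont des schémas en groupes lisses de type fini sur $F$ (on est en caractéristique $\neq 2$), cette équivalence est standard : un morphisme de groupes algébriques lisses est surjectif au sens fppf si et seulement s'il l'est sur les $\overline{F}$-points. Il suffit donc de montrer que pour tout $\mu \in \overline{F}^\times$, il existe un élément $g \in \textbf{GU}(W)(\overline{F})$ tel que $\lambda(g)=\mu$.

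Sur $\overline{F}$, l'algèbre $D_{\overline{F}} = D \otimes_F \overline{F}$ est déployée (elle se réduit à $\overline{F}$, à $\overline{F}\oplus \overline{F}$, ou à $M_2(\overline{F})$ selon la nature de $D$), et toute forme $\epsilon$-hermitienne non dégénérée sur $W_{\overline{F}}$ admet une décomposition de Witt en plans hyperboliques, avec éventuellement un noyau anisotrope de dimension bornée. Sur un plan hyperbolique $(e,f)$ avec $\langle e, f\rangle = 1$, l'application $e\mapsto \mu e$, $f\mapsto f$ est une similitude de facteur $\mu$ ; en assemblant cette construction sur chacun des plans d'une décomposition de Witt complète, on obtient un élément de $\textbf{GU}(W)(\overline{F})$ de similitude $\mu$, ce qui conclut. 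De façon équivalente, toutes les formes $\epsilon$-hermitiennes non dégénérées de même dimension sur $\overline{F}$ étant isomorphes, tout isomorphisme entre $(W_{\overline{F}}, \langle,\rangle)$ et $(W_{\overline{F}}, \mu\langle,\rangle)$ fournit un tel $g$.

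L'obstacle principal est donc cette étape de surjectivité géométrique, qu'il faut vérifier cas par cas selon $(D,\tau,\epsilon)$ : elle est immédiate dans les cas symplectique et unitaire, et dans le cas orthogonal en dimension paire ; le seul point demandant un peu d'attention est le cas orthogonal en dimension impaire, où le noyau anisotrope unidimensionnel doit être traité à part. Mais sur $\overline{F}$, tout scalaire étant un carré, même ce dernier cas est sans difficulté.
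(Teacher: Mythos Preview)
Your proposal is correct. The paper itself does not give an argument: it simply cites \cite{KMRT}, page~346, Sequence~(23.4), and leaves it at that. Your approach is therefore different only in the sense that you actually supply a proof rather than a reference. You correctly identify that the sole nontrivial point is the fppf-surjectivity of $\lambda$, reduce to $\overline{F}$-points via smoothness, and then observe that over an algebraically closed field any two nondegenerate $\epsilon$-hermitian forms of the same rank are isomorphic (so an isometry from $(W_{\overline{F}},\langle,\rangle)$ to $(W_{\overline{F}},\mu\langle,\rangle)$ yields a similitude of factor $\mu$); the hyperbolic-plane construction you give first is an explicit instance of this. The case analysis you sketch---symplectic, unitary, quaternionic via Morita, orthogonal with the odd-dimensional case handled by square roots in $\overline{F}$---is exactly what underlies the cited passage in \cite{KMRT}, so your argument and the reference ultimately rest on the same facts; yours has the advantage of being self-contained, while the citation has the advantage of pointing to a source where the group-scheme formalism (and in particular the smoothness in all characteristics $\neq 2$) is fully worked out.
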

\begin{proof}
Ceci découle de [ \cite{KMRT}, Page 346, Sequence (23.4)].
\end{proof}
Soient $\overline{F}$ une cl\^oture séparable de $F$, $\Gal=\Gal(\overline{F}/F)$ le groupe de Galois de $\overline{F}/F$. On a une suite exacte
$$1 \longrightarrow U(W) \longrightarrow GU(W) \stackrel{\lambda}{\longrightarrow } F^{\times} \longrightarrow H^1(\Gal, U(W)) \longrightarrow H^1(\Gal, GU(W)) \longrightarrow H^1(\Gal, \mathbb{G}_m) =1.$$

\begin{lemme}\label{wittlambda}
Si on a la décomposition de Witt $W=V_H \oplus W^0$ avec $V_H \simeq mH$ où $H$  est  le plan hyperbolique $\epsilon$-hermitien sur $D$ et $W^0$ est un espace anisotrope, alors
on a la suite exacte suivante:
$$1 \longrightarrow U(W) \longrightarrow GU(W) \longrightarrow \lambda(GU(W^0)) \longrightarrow 0.$$
\end{lemme}
\begin{proof}
Soit $g\in GU(W)$, $W=g(V_H) \oplus g(W^0)$ est aussi la décomposition de Witt de $W$. D'après le Théorème de Witt [cf. \cite{MVW}, Page 6], il existe un élément $g_0 \in U(W)$, tel que $g(W^0)=g_0(W^0)$, il en résulte que $g_0^{-1}g\in GU(W^0)$ et $\lambda(g_0^{-1} g)=\lambda(g)$. Donc on a montré que $\lambda(GU(W)) \subset \lambda(GU(W^0))$. Il reste à montrer que c'est tout.
Rappelons que le plan hyperbolique $\epsilon$-hermitien $H$  égal au $D$-espace vectoriel à droite $D\times D$ muni du produit $\epsilon$-hermitien $\langle (d_1,d_2), (d_1',d_2') \rangle = \tau(d_1)d_2'+ \epsilon \tau(d_2)d_1'$. Ainsi, $\lambda(GU(H))=F^{\times}$,  Par la suite exacte ci-dessus, on sait que $\lambda(GU(W^0))$ est un sous-groupe de $F^{\times}$, il en résulte que si on prend un élément $\lambda\in \lambda(GU(W^0))$, alors il existe des éléments $g_0\in GU(W^0)$ et $g_H\in GU(H)$ tels que $\lambda(g_0)=\lambda(g_H)=\lambda$. Donc on peut regarder $g=g_0 \times \underbrace{g_H \times \cdots \times g_H  }_{m}  $ comme un élément du groupe $GU(W)$, de plus, on voit aisément que $\lambda(g)=\lambda$, ceci termine la démonstration.
\end{proof}

\begin{prop}\label{lamedavalues}
(1) Si $D=F$, $\epsilon=-1, U(W)=Sp(W)$, $GU(W)=GSp(W)$, alors on a une suite exacte $$1 \longrightarrow Sp(W)  \longrightarrow GSp(W) \stackrel{\lambda}{\longrightarrow} F^{\times} \longrightarrow 1.$$
(2) Si $D=F$, $\epsilon=1$, $U(W)=O(W)$, $GU(W)=GO(W)$, supposons que on a la décomposition de Witt $W=W^0\oplus mH$; alors
\begin{itemize}
 \item[(i)] si $\dim_F W^0= 0,  3, 4$, alors on a une suite exacte:
 $$1 \longrightarrow O(W) \longrightarrow GO(W) \stackrel{\lambda}{\longrightarrow} F^{\times} \longrightarrow 1.$$
 \item[(ii)] si $\dim_F W^0=1$, alors on une suite exacte:
 $$1 \longrightarrow O(W) \longrightarrow GO(W) \stackrel{\lambda}{\longrightarrow} F^{\times 2} \longrightarrow 1. $$
 \item[(iii)] si $\dim_F W^0=2$, donc $W^0=E(f)$ où $E$ est une extension quadratique de $F$ et $f\in F^{\times} \textrm{ modulo } \nnn_{E/F}(E^{\times})$,  alors on a une suite exacte:
 $$1 \longrightarrow O(W) \longrightarrow GO(W) \stackrel{\lambda}{\longrightarrow} \nnn_{E/F}(E^{\times}) \longrightarrow 1. $$
 \end{itemize}
(3) Si $D=E$ est une extension quadratique de $F$, $\epsilon= \pm 1$, alors on a des suites exactes
$$1 \longrightarrow U(W) \longrightarrow GU(W) \stackrel{\lambda}{\longrightarrow} F^{\times} \longrightarrow 1 \textrm{ si } n \textrm{ pair },$$
et
$$1 \longrightarrow U(W) \longrightarrow GU(W) \stackrel{\lambda}{\longrightarrow} \nnn_{E/F}(E^{\times}) \longrightarrow 1  \textrm{ si } n \textrm{ impair }.$$
(4) Si $D$ est l'unique corps de quaternions sur $F$, $\epsilon=1$, alors on a une suite exacte
$$1 \longrightarrow U(W) \longrightarrow GU(W) \stackrel{\lambda}{\longrightarrow} F^{\times} \longrightarrow 1.$$
(5) Si $D$ est  l'unique corps de quaternions sur $F$, $\epsilon=-1$, alors on a une suite exacte
$$1 \longrightarrow U(W) \longrightarrow GU(W) \stackrel{\lambda}{\longrightarrow} F^{\times} \longrightarrow 1.$$
\end{prop}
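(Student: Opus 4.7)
La stratégie consiste à appliquer systématiquement le Lemme \ref{wittlambda}, qui ramène, dans chaque cas (1)--(5), le calcul de $\lambda\big(GU(W)\big)$ à celui de $\lambda\big(GU(W^0)\big)$, où $W^0$ est le noyau anisotrope de la décomposition de Witt $W = W^0 \oplus mH$. L'inclusion $\lambda\big(GU(W^0)\big) \subseteq \lambda\big(GU(W)\big)$ étant évidente et l'inclusion inverse étant fournie par le lemme, il suffira alors, pour chaque couple $(D, \tau)$ et chaque choix de $\epsilon$, d'énumérer les $W^0$ possibles via la classification locale des formes $\epsilon$-hermitiennes anisotropes sur un corps $p$-adique de caractéristique résiduelle $\neq 2$ (bornes de Springer sur $\dim W^0$), et de calculer directement l'image de $\lambda$ sur $GU(W^0)$.

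Pour toutes les situations conduisant à $\lambda = F^{\times}$ (cas (1); cas (2)(i) avec $W^0 = 0$; cas (3) avec $n$ pair; cas (4) et (5)), je procèderais comme suit : on exhibe dans $W$ un plan hyperbolique $H = De \oplus Df$ avec $\langle e, f\rangle = 1$, et on vérifie que la transformation $e \mapsto \alpha e$, $f \mapsto f$, complétée par l'identité sur un supplémentaire orthogonal, définit bien une similitude de rapport $\alpha$ pour tout $\alpha \in F^{\times}$; l'existence d'un tel plan est garantie par la décomposition de Witt dès que $m \geq 1$, et les cas (4), (5) (corps de quaternions) utilisent en outre que $N_{D/F} : D^{\times} \to F^{\times}$ est surjective sur un corps $p$-adique.

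Pour les cas anisotropes non triviaux, j'identifierais $W^0$ à un objet algébrique explicite : pour (2)(ii), $W^0 = Fv_0$ avec forme $q(v_0) = a$, de sorte que toute similitude est un scalaire $c \in F^{\times}$ de rapport $c^2$; pour (2)(iii), $W^0 \simeq (E, f \cdot N_{E/F})$ et la multiplication par $E^{\times}$ fournit les similitudes, de rapport $N_{E/F}$; pour (3) avec $n$ impair, $W^0$ est de dimension $1$ sur $E$ et la multiplication par $e \in E^{\times}$ a pour rapport $e\tau(e) = N_{E/F}(e)$. Dans chaque cas, on vérifiera que ces similitudes exhibées engendrent bien toute l'image de $\lambda$, en utilisant que toute autre similitude se factorise comme produit d'une isométrie (qui a rapport $1$) et de la similitude scalaire canonique.

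L'obstacle principal sera le contrôle précis de la classification des $W^0$ anisotropes dans chaque scénario, en particulier l'assurance qu'aucun rapport parasite n'intervient en dehors du sous-groupe annoncé. Cela demandera d'invoquer à plusieurs reprises les faits locaux suivants : la surjectivité de $N_{D/F}$ pour le corps de quaternions $p$-adique, la description $F^{\times}/N_{E/F}(E^{\times}) \simeq \Z/2\Z$ pour $E/F$ quadratique, et la formule $\det(g)^2 = \lambda(g)^{\dim W}$ qui contraint $\lambda(g)$ à être un carré lorsque $\dim W$ est impair, fournissant ainsi les inclusions manquantes $\lambda(GO(W^0)) \subseteq F^{\times 2}$ (cas (2)(ii)) ou $\lambda(GU(W^0)) \subseteq N_{E/F}(E^{\times})$ (cas (3), $n$ impair, via la condition $\tau$-invariante sur le déterminant).
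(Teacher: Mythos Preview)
Your overall strategy --- reduce to the anisotropic kernel via Lemme~\ref{wittlambda}, then run through the local classification of anisotropic $\epsilon$-hermitian forms --- is exactly the paper's approach. The difference is only in how the cases are organized, and there your enumeration has a genuine gap.

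You treat the ``$\lambda = F^{\times}$'' situations by exhibiting a hyperbolic plane in $W$ and writing down the obvious similitude of arbitrary ratio. That is fine when $m\geq 1$, but it says nothing when $W=W^0$ is itself anisotropic. Concretely, in case~(2)(i) you only handle $\dim W^0=0$ and never return to $\dim W^0 = 3$ or $4$; likewise in case~(3) with $n$ even you may have $W=W^0$ of $E$-dimension~$2$, and in cases~(4),(5) you may have $W$ anisotropic over the quaternion division algebra. In all of these there is no hyperbolic plane to use, yet the claim is still $\lambda(GU(W^0))=F^{\times}$, and this is where the actual content lies.

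The paper fills this gap by identifying the anisotropic form with an explicit algebraic object carrying a multiplicative structure: for $\dim W^0=3,4$ in the orthogonal case one has $W^0\simeq D^0(a)$ or $W^0\simeq D$ with the reduced norm of the quaternion algebra $D$, and conjugation or one-sided multiplication by $D^{\times}$ furnishes similitudes of ratio $\nnn_{D/F}(d)$; surjectivity of $\nnn_{D/F}$ then gives all of $F^{\times}$. The same mechanism (with $D$ now the ambient quaternion algebra, or with $W^0\simeq D$ as a $2$-dimensional hermitian $E$-space) handles the missing anisotropic instances in (3),(4),(5). Your last paragraph alludes to $\nnn_{D/F}$ being surjective, so you are close, but the plan as written never connects this fact to the anisotropic orthogonal kernels of dimension~$3$ and~$4$, and that link is precisely what must be supplied.
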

\begin{proof}
Par le Lemme \ref{wittlambda},  en fait, le morphisme $\lambda$ est défini sur le groupe de Witt-Grothendieck.\\
(1) C'est bien connu.\\
(2) Par  [\cite{MVW}, Page 7], on connaît  les espaces quadratiques anisotropes à isomorphisme près:
(i) $F(a)$ pour $a \in F^{\times}$ modulo $F^{\times 2}$, de dimension 1, en ce cas $\lambda(GU(W))=F^{\times 2}$; (ii)  $E$, $E(f)$ pour chaque extension quadratique $E/F$, munie de la norme sur $F$, et $f\in F^{\times}$ n'est pas norme d'un élément de $E$, donc en ce cas $\lambda(GU(W))=\nnn_{E/F}(E^{\times})$; (iii) $D^0(a)$ ou $D$  où $D$ est l'unique corps de quaternions sur $F$, muni de la norme réduite, $D^0$ étant le sous-espace des éléments de trace nulle de $D$, et $a\in F^{\times}/{F^{\times 2}}$. Donc en ce cas, $\lambda(GU(W))=\nnn_{D/F}(D^{\times})=F^{\times}$.\\
(3) Dans ce cas, $\tau$ est  de seconde espèce, on a une bijection entre  les espaces $\epsilon$-hermitiens et les espaces  hermitiens. Par les énoncés de [\cite{MVW}, Page 7], les  espaces hermitiens anisotropes  sur $E$ sont classifiées de la façon suivante (i) $E$ , $E(f)$ où $f\in F^{\times}$ n'est pas norme d'un élément de $E$. (ii) $D$. Donc en le cas (i), $\lambda(GU(W))=\nnn_{E/F}(E^{\times})$; en le cas (ii) $\lambda(GU(W))=F^{\times}$, on a trouvé le résultat.\\
(4) Par les énoncés de [\cite{MVW}, Page 7], il existe un seul espace anisotrope sur $D$, et $\lambda(GU(D))=\nnn_{D/F}(D^{\times})=F^{\times}$. \\
(5) D'après le Théorème d'orthogonalisation, on a $W\simeq \oplus_{i=1}^n D(a_i)$. Pour chaque élément $a\in F^{\times}$,  le résultat dans [\cite{Scha}, Page 364] a montré qu'il existe un élément $d_a^i\in D^{\times}$ tel que $d_a^{i \star} a_i d_a^i=aa_i$. L'élément $\delta_a=\underbrace{d_a^1 \times \cdots \times d_a^n}_n$ appartient à $GU(W)$ et satisfait à $\lambda(\delta_a)=a$, donc $\lambda(GU(W))=F^{\times}$.
\end{proof}
Soient $W$ un espace symplectique sur $F$, $W=W_1\otimes_D W_2$ une décomposition en produit tensoriel, donnant $(U(W_1), U(W_2))$ une paire réductive duale irréductible de $Sp(W)$. On définit un sous-groupe intermédiaire $\Gamma$ de $Sp(W)$ de la façon suivante:
$$\Gamma= \{ (g_1,g_2)| g_1 \in GU(W_1), g_2 \in GU(W_2) \textrm{ tels que } \lambda(g_1g_2)=1\}.$$
On a l'application canonique
$$\Gamma \stackrel{i}{\longrightarrow} Sp(W); i(g_1, g_2) \longmapsto g_1 \otimes g_2.$$
On note $\overline{\Gamma}$ (resp.  $\widetilde{\Gamma}$) l'image réciproque de $i(\Gamma)$ dans $\overline{Sp}(W)$ (resp. $\widetilde{Sp}(W)$).
\begin{thm}\label{scindagedugroupeR}
Soient $(W, \langle,\rangle)$ un espace symplectique sur le corps non archimédien $F$  avec une décomposition en produit tensoriel $W=W_1\otimes_D W_2$, $\Gamma$ le sous-groupe intérmédiaire, $\overline{\Gamma}$ (resp. $\widetilde{\Gamma}$) leur image réciproque dans $\overline{Sp}(W)$(resp. $\widetilde{Sp}(W)$); alors $\overline{\Gamma}$, $\widetilde{\Gamma}$ sont scindés au-dessus de  $\Gamma$ sauf si $W\simeq W_1\otimes_{F'} W_2$ où $W_1$ est un espace symplectique sur une extension $F'$ de $F$,  et $\dim_{F'} W_2$ impaire.
\end{thm}
D'après  [\cite{MVW}, Page 52, Lemme], il suffit de considérer les paires duales de type 1 dans le Théorème \ref{MVWclassificationdepaires}. Nous posons
$$\Lambda_{\Gamma}=\{ \lambda(g_1)=\lambda(g_2)^{-1}| (g_1, g_2) \in \Gamma \},$$
donc il existe une suite exacte
\[
\begin{array}[c]{cccccccccc}
0 &\longrightarrow & U(W_1) \times U(W_2)&  \longrightarrow &   \Gamma      &\stackrel{\lambda}{\longrightarrow}  & \Lambda_{\Gamma}&  \longrightarrow & 1 \\
  &                &                     &                  &  (g_1,g_2) &   \longmapsto                     &  \lambda(g_1) &    &
\end{array}
\]
En utilisant la suite exacte d'Hochschild-Serre:
$$ \cdots  \longrightarrow H^2(\Lambda_{\Gamma}, \mu_8) \longrightarrow H^2(\Gamma, \mu_8) \longrightarrow H^2(U(W_1) \times U(W_2), \mu_8) \longrightarrow \cdots $$
D'après le Théorème \ref{scindagesurlepaireduale}, la restriction à $U(W_1)\times U(W_2)$  de la classe $[c_{Rao}]$ est triviale; ainsi  on peut  choisir un  2-cocycle $c$ de $[c_{Rao}]$ tel que
$$c(r,r')=c_{\Lambda_{\Gamma}}(\lambda(r), \lambda(r')) \cdots (1)$$
pour un $2$-cocycle $c_{\Lambda_{\Gamma}}$ dans $H^2(\Lambda_{\Gamma}, \mu_8)$. \\
\ \\
($\star$) Si on peut trouver un sous-groupe $\Gamma_1$ de $\Gamma$ tel que $\lambda(\Gamma_1)=\Lambda_{\Gamma}$ et que la restriction de $[c]$ à $\Gamma_1$ est triviale, alors $[c_{\Lambda_{\Gamma}}]$ est aussi triviale, donc $[c]$ l'est aussi.

\begin{lemme}\label{scindagehyperbolique}
Si $W_1$ ou $W_2$ est un espace hyperbolique,  alors les extensions $\overline{\Gamma}$, $\widetilde{\Gamma}$ sont scindées au-dessus de $\Gamma$.
\end{lemme}
\begin{proof}
On peut supposer que $W_2=nH$ où $H$ est le plan hyperbolique sur $D$, on a alors $W_{1_H}=W_1 \otimes_D H \hookrightarrow \oplus_1^n W_{1_H} \simeq W_1\otimes_D W_2; w \longmapsto (w)_1^n$. Soit $\Gamma_{W_{1_H}}$ le sous-groupe intermédiaire attaché à $W_{1_H}$. On a un morphisme de groupes
 $$\Gamma_{W_{1_H}} \longrightarrow \Gamma; \gamma \longmapsto (\gamma)_1^n.$$
 L'image est un sous-groupe de $\Gamma$ satisfaisant aux conditions du point ($\star$) ci-dessus, notée par $\Gamma_1$. Notant $\overline{\Gamma_{W_{1_H}}}$ (resp. $\overline{\Gamma_1}$) l'image réciproque de $\Gamma_{W_{1_H}}$ (resp. $\Gamma_1$) dans $\overline{Sp}(W_{1_H})$ (resp. $\overline{Sp}(W)$). D'après le Lemme \ref{morphismedegroupes}, on a un morphisme $$\overline{\Gamma_{W_{1_H}}} \longrightarrow \overline{\Gamma_1}.$$ Si $\overline{\Gamma_{W_{1_H}}}$ est scindé au-dessus de $\Gamma_{W_{1_H}}$, on trouve une section-morphisme $\Gamma_1 \longrightarrow \overline{\Gamma_1}$, d'où le résultat.  Ainsi, il suffit de supposer $W=W_1 \otimes_D H$. D'abord, supposons que $H$ est le plan hyperbolique hermitien à gauche sur $D$, muni de la base hyperbolique $\{ e_1, e_1^{\star}\}$; si $w_2=a_1e_1 + a_2 e_1^{\star}\in H$, $w_2'=b_1e_1 + b_2 e_1^{\star} \in H$, on a
$$ \langle w_1, w_1' \rangle_2= a_1 \tau(b_2) + a_2 \tau(b_1)= (a_1, a_2)
 \begin{pmatrix}
0& 1\\
 1&0
\end{pmatrix}  \tau\Bigg(\left( \begin{array}{l}
b_1 \\
b_2
\end{array}\right)\Bigg).$$
Notant que $g= \begin{pmatrix}
a& b\\
c&d
\end{pmatrix} \in GL_2(D)$ agit sur $w_2=xe_1 + y e_1^{\star} \in H $ par $g\cdot w_2= (x, y)  \begin{pmatrix}
a& b\\
c&d
\end{pmatrix}  \left(\begin{array}{l}
e_1 \\
e_1^{\star}
\end{array}\right)$,  alors $g\in GU( H)$ si et seulement si
$$\begin{pmatrix}
a & b\\
c & d
\end{pmatrix} \begin{pmatrix}
0& 1\\
1&0
\end{pmatrix} \begin{pmatrix}
\tau(a)& \tau(c)\\
\tau(b)& \tau(d)
\end{pmatrix}= \lambda(g)\begin{pmatrix}
0& 1\\
1&0
\end{pmatrix}.$$
Il en résulte que $\lambda(g) \in F^{\times}$ et $\begin{pmatrix}
1& 0\\
0&\lambda(g)
\end{pmatrix} \in GU(H)$. Donc on a  la suite exacte
$$1 \longrightarrow U(H) \longrightarrow GU(H) \stackrel{\lambda}{\longrightarrow} F^{\times} \longrightarrow 1.$$
On a m\^eme une section, i.e. un homomorphisme $s: F^{\times} \longrightarrow GU(H); a \longmapsto   \begin{pmatrix}
1& 0\\
0&a
\end{pmatrix}$ qui satisfait à $\lambda\circ s= id_{F}$. On voit aisément que le m\^eme argument   est aussi vrai si $H$ est le plan hyperbolique anti-hermitien sur $D$. Ainsi, on peut définir un sous-groupe $\Gamma_1$ de $\Gamma$ où
$$\Gamma_1=\{ (g_1,g_2) \in GU(W_1) \times s(F^{\times}) | \lambda(g_1)=\lambda(g_2)^{-1} \}.$$
L'image de $\Gamma_1$ dans le groupe $Sp(W)$ est liée à  un sous-groupe de Borel. Cela implique que la restriction de $[c_{Rao}]$ à $\Gamma_1$ est triviale. Par le point $(\star)$ ci-dessus, on obtient le résultat.
\end{proof}
\begin{lemme}\label{scindagesursymplectique}
Si $W_1$ est un espace symplectique de dimension $2n$ et que $W_2$ est un espace orthogonal de dimension $2m$ sur $F$, alors les groupes $\overline{\Gamma}, \widetilde{\Gamma}$ sont scindés au-dessus de  $\Gamma$.
\end{lemme}
\begin{proof}
C'est une conséquence du Lemme \ref{scindagehyperbolique}.
\end{proof}
\begin{lemme}\label{scindagequadratique}
Soient $D=E$ une extension quadratique de $F$, $W_1$( resp. $W_2$) un espace hermitien (resp. anti-hermitien) anisotrope sur $E$; alors les groupes $\overline{\Gamma}$, $\widetilde{\Gamma}$ sont scindés au-dessus de  $\Gamma$.
\end{lemme}
\begin{proof}
(i) Si $\dim_E(W_1)=1$, $W_1=E(f)$ où $f=1$ ou $f\in F^{\times} \smallsetminus \nnn_{E/F}(E^{\times})$. La forme associée est définie par $\langle e_1, e_2 \rangle_1=f\overline{e_1} e_2$. On a une application bijective $E$-linéaire $W= E(f) \otimes_E W_2 \stackrel{b}{\longrightarrow} W_2; e\otimes w_2 \longmapsto ew_2$. Si $e_1 \otimes w_2, e_1'\otimes w_2' \in W$, on a
$$\langle e_1\otimes w_2, e_1'\otimes w_2' \rangle_W= \tr_{E/F}( \langle e_1, e_1'\rangle_1 \overline{\langle w_2, w_2' \rangle_2}) = \tr_{E/F}(f\overline{e_1} e_1' \overline{\langle w_2, w_2' \rangle_2})$$
$$=\tr_{E/F} (f\overline{e_1 \langle w_2, w_2' \rangle_2 \overline{e_1}'}) \stackrel{ \textrm{ par définition} }{=} \tr_{E/F}(f \overline{\langle e_1w_2, e_1' w_2' \rangle_2}).$$
Donc si on munit l'espace $W_2$ de la forme symplectique  $\langle, \rangle_2'=\tr_{E/F}(f\overline{\langle, \rangle_2})$, alors l'application $b$ ci-dessus est une isométrie. Rappelons:
$$\Gamma=\{ (g_1,g_2) | g_1 \in E^{\times}, g_2 \in GU(W_2), \textrm{ tels que } \lambda(g_1)\lambda(g_2)=1\}$$
et $$\Gamma \hookrightarrow Sp(W, \langle, \rangle) \simeq Sp(W_2', \langle, \rangle_2').$$
On note par $i$ le morphisme composé. Ainsi, on peut voir aisément que $i(\Gamma)= i(U(W_1) \times U(W_2))$, donc $\overline{\Gamma}$, $\widetilde{\Gamma}$ sont scindés au-dessus de  $\Gamma$ en ce cas.\\
(ii) Si $\dim_E(W_2)=1$, $W_2 \simeq E(f)$ où $f=1$ ou $f\in F^{\times} \smallsetminus \nnn_{E/F}(E^{\times})$. On peut prendre un tel élément $u\in E^{\times}$ satisfaisant à $\overline{u}/u=-1$, tel que la forme anti-hermitienne soit donnée par la formule suivante:
$$\langle e_2, e_2' \rangle_2= u f e_2 \overline{e_2'} \textrm{ pour } e_2, e_2' \in E.$$
On a $$W =W_1 \otimes_E E(f) \simeq W_1.$$
Rappelons la définition de la forme $\langle, \rangle_W$:
$$\langle w_1 \otimes e_2, w_1'\otimes e_2' \rangle_W = \tr_{E/F} \big( \langle w_1, w_1' \rangle_1 (-uf \overline{e_2} e_2')\big)= \tr_{E/F}(-uf\langle e_2 w_1, e_2' w_1' \rangle_1 ).$$
Nous pouvons identifier $W$ à $W_1$ par  munir $W_1$ de la forme $\tr_{E/F}(-uf\langle, \rangle_1)$. Il en résulte que
$$i(\Gamma)=i(U(W_1) \times U(W_2))$$
où $i: \Gamma \longrightarrow Sp(W_1, \tr_{E/F}(-uf\langle, \rangle_1))$, donc $\overline{\Gamma}, \widetilde{\Gamma}$ sont scindés au-dessus de  $\Gamma$.\\
(iii)  Si $\dim_{E} W_1 = \dim_{E} W_2 =2$, donc $W_1 \simeq D$ et $W_2 \simeq D$ où $D$ est le corps des quaternions sur $F$. Soit $E=F(i)$ avec $i^2=-\alpha$, choisissons un élément $j$ de $D$ tels que $ j^2=-\beta, ij=-ji= k$, de sorte que  $\{ 1, i, j, k\}$ forme une base de $D$. On note $-$ la conjugaison canonique de $D$. Par hypothèse, $W_1 \simeq D= E + jE$ muni de la forme
$$\langle e_1+je_2, e_1' + je_2' \rangle_1=\tr_{D/E}\Big( \overline{(e_1+ j e_2)}(e_1' + je_2')\Big)= \overline{e_1} e_1' + \beta \overline{e_2} e_2'$$
où $\tr_{D/E}(e_1+je_2):=e_1$. D'autre part, $W_2 \simeq D = E + Ej$, et il existe   un élément $u$ de $E$ satisfait à $u^{\sigma}/u=-1$  où $\Gal(E/F) = \langle \sigma \rangle$,  tels que la forme anti-hermitienne sur  $D$ soit définie  de la façon suivante:
$$\langle e_1+e_2j, e_1' + e_2' j\rangle_2= \tr_{D/E}\Big( u (e_1+e_2 j) (\overline{e_1'} + \overline{e_2'j})\Big)= u(e_1\overline{e_1'}+ \beta e_2 \overline{e_2'}).$$
Rappelons que la $F$-forme associée à $W=W_1 \otimes W_2$ soit définie de la façon suivante:
$$\langle w_1\otimes w_2, w_1' \otimes w_2' \rangle= \tr_{E/F}\Big( \langle w_1, w_1'\rangle \overline{ \langle w_2, w_2' \rangle_2} \Big)$$
$$=\tr_{E/F}\Big( - u \big(\overline{a_1}a_1' + \beta \overline{a_2}a_2' \big)\big( b_2 \overline{b_1} + \beta b_2'\overline{b_1'}\big)\Big)$$
$$=\tr_{E/F}\Big( -u\big( \overline{a_1}\overline{b_1} a_1'b_2 + \beta\overline{a_1}\overline{b_1'}a_1'b_2'\big)\Big)+ 
\tr_{E/F}\Big( -u\beta \big( \overline{a_2}\overline{b_1} a_2' b_2 + \beta \overline{a_2} \overline{b_1'} a_2' b_2'\big)\Big)$$
où $w_1=a_1+ j a_2, w_1'=a_1' + ja_2' \in W_1; w_2=b_1 + b_1' j, w_2'= b_2 + b_2' j\in W_2$. \\
On sait que $W= W_1\otimes_E W_2 \simeq D\otimes_E D \simeq (E + jE)\otimes_E D \simeq D \oplus D$ comme $F$-espace. Pour montrer que ceci sont aussi isométriques, il suffit de munir l'espace $D \oplus D$ de la forme symplectique ci-dessous:
$$\langle, \rangle_{D \oplus D}=\tr_{E/F}\Big( \overline{\langle, \rangle_2}\Big) + \tr_{E/F} \Big( \beta \overline{\langle, \rangle_2}\Big).$$
Soit $\Gamma_1$ le sous-groupe de $D^{\times}$ engendré par $E^{\times}$ et $\langle j\rangle$. Par définition, on peut identifier $\Gamma_1$ à un  sous-groupe de $\Gamma$. De plus $\lambda(\Gamma_1)=\langle \lambda (E^{\times}), \lambda(j)\rangle= \langle \nnn_{E/F}(E^{\times}), \beta \rangle =F^{\times}$.  Rappelons l'application $ \Gamma_1  \hookrightarrow \Gamma \stackrel{i}{ \longrightarrow} Sp(D\oplus D)$. Si $e\in E^{\times}$, $i\big((e, e^{-1})\big)=1$; Si on prend $j \in \Gamma_1$, on peut voir aisément que $i\big((j, j^{-1})\big)$ est dans un sous-groupe parabolique minimal de $Sp(D\oplus D)$. Comme $\Gamma_1$ est engendré par $E^{\times}$ et $f$, de plus  $i$ est un homomorphisme de groupes, il en résulte que $i(\Gamma_1)$ est dans un sous-groupe parabolique minimal de $Sp(D\oplus D)$. Donc la restriction de $[c_{Rao}]$ à $\Gamma_1$ est triviale;  utilisant le point $(\star)$, nous obtenons le résultat.
\end{proof}

\begin{lemme}\label{scindagequaternions}
Si $D$ est l'unique corps de quaternions sur $F$ et $W_1 = D$, alors les extensions $\overline{\Gamma}, \widetilde{\Gamma}$ sont scindées sur $\Gamma$.
\end{lemme}
\begin{proof}
Rappelons que  la forme hermitienne sur $D$  est donnée par $\langle d, d' \rangle_1= \tau(d) d'$  où  $\tau$ est  l'involution canonique  de $D$. On note
$$ \Gamma_1=\{ (g,g^{-1})| g\in D^{\times}\}.$$
$\Gamma_1$ est un sous-groupe de $\Gamma$ satisfaisant au point $(\star)$. D'après le Théorème d'orthogonalisation, on a $W_2 \simeq \oplus_{i=1}^n (a_i)D$. D'après le Lemme \ref{morphismedegroupes}, on se ramère au cas $\dim_D W_2=1$. Dans ce cas,  l'espace $W$ est isométrique à $W_2$  en munissant $W_2$  de la forme $\langle, \rangle_2'=\tr_{D/F}(\tau(\langle, \rangle_2))$.  L'action de $\Gamma_1$ sur $W_2$ est donnée par la formule suivante:
$$d \longmapsto g dg^{-1} \textrm{ pour } (g, g^{-1}) \in \Gamma_1, d\in W_2 \simeq D.$$
On prend une base $\{ 1, i, j , k=ij\}$ de $D$. Par la théorie des nombres, $F^{\times}$ est engendré par $F^{\times 2}, \nnn_{D/F}(i), \nnn_{D/F}(j)$. Notant $t: \Gamma \longrightarrow Sp(W_2, \langle, \rangle_2')$. On pose
$$\Gamma_{1,0}=\{ \gamma=(g, g^{-1}) \in \Gamma_1 | \nnn_{D/F}(g) \in F^{\times 2}\}.$$
L'image de $\Gamma_{1,0}$ dans $Sp(W_2, \langle, \rangle_2')$ est dans celle de $U(W_1) \times U(W_2)$. Soit $\Gamma_{1,1}$ le groupe engendré par $(i, i^{-1})$ et $(j, j^{-1})$. L'image de $\Gamma_{1,1}$ est dans un sous-groupe de Borel de $Sp(W_2, \langle, \rangle_2')$. Considérons le groupe intermédiaire $\Gamma_1'$ engendré par $\Gamma_{1,0}$, $i$, $j$. On a une suite exacte
$$1 \longrightarrow \Gamma_{1,0} \longrightarrow \Gamma_1' \longrightarrow \Gamma_1'/{\Gamma_{1,0}} \simeq F^{\times }/{F^{\times 2}} \longrightarrow 1.$$
Par le m\^eme argument du point $(\star)$, on voit que $\overline{\Gamma_1'}$ est scindé au-dessus de  $\Gamma_1'$, d'où le résultat.
\end{proof}
\underline{La preuve du Théorème \ref{scindagedugroupeR}}\\
D'abord, on a les décompositions de Witt, $W_1 = m_1H \oplus W_1^0$ et $W_2=m_2 H \oplus W_2^0$ où $W_1^0$ et $W_2^0$ sont anisotropes. Si $W_1^0=0$ ou $W_2^0=0$, on  déduit le résultat du Lemme
\ref{scindagehyperbolique}.  Si $W_1^0 \neq 0$ et $W_2^0 \neq 0$, on a une application  $W^0 = W_1^0 \otimes_D W_2^0 \hookrightarrow  W= W_1^0 \otimes_D W_2^0 \oplus \big( W_1^0 \otimes_D m_2H \oplus m_1 H \otimes_D W_2^0 \oplus m_1H \otimes m_2 H\big)$. On peut définir un sous-groupe
$$\Gamma_1=\{ (g_1,g_2) | g_1 \in GU(W_1^0), g_2 \in GU(W_2^0) \textrm{ tels que }  \lambda(g_1)\lambda(g_2)=1\}.$$
D'après la démonstration du Lemme \ref{wittlambda}, on sait que $\Gamma$ est un sous-groupe de $U(W_1) \times U(W_2)\Gamma_1$. Il suffit de considèrer le groupe intermédiaire $U(W_1) \times U(W_2) \cdot \Gamma_1$. Par le m\^eme argument du point $(\star)$, on est réduit à traiter le groupe $\Gamma_1$.  D'après le Lemme \ref{morphismedegroupes},   on est donc  ramené au cas $W=W_1^0 \otimes_D W_2^0$, et par les lemmes \ref{scindagesursymplectique} à \ref{scindagequaternions}, on trouve le résultat dans ces cas. Ceci termine la démonstration.
\begin{rem}
Dans le Théorème \ref{scindagedugroupeR},  nous excluons le cas:\\
$W=W_1\otimes_{F'} W_2$ où $W_1$ (resp.  $W_2$ ) est un espace symplectique (resp.  orthogonal) sur une extension $F'$ de $F$, de dimension $2n$ (resp.  $2m-1$) avec $n \geq 1, m\geq 1$.\\
Dans ce cas, on a vu que l'image réciproque de $Sp(W_1)$ dans $\overline{Sp}(W)$ (resp.  $\widetilde{Sp}(W)$) est égale à $\overline{Sp}(W)$ (resp.   $\widetilde{Sp}(W)$); a fortiori, le groupe intermédiaire $\overline{\Gamma}$(resp.   $\widetilde{\Gamma}$) n'est pas scindé au-dessus de  $\Gamma$.
\end{rem}
\begin{prop}\label{saufcasscindage}
Soit $W=W_1 \otimes_{F'} W_2$ un espace symplectique sur $F'$, où $W_1$ (resp.  $W_2$) est un espace symplectique (resp.  orthogonal) de dimension $2n$ (resp.  $2m-1$) avec $n \geq 1$ et $m\geq 1$. On note $\widetilde{GSp}(W_1)$  chaque extension du groupe  de similitudes par $\C^{\times}$ tel que $\widetilde{GSp}(W_1)$ contient $\widetilde{Sp}(W_1)$ comme un sous-groupe.\\
On définit
$$\widetilde{\Gamma}^{1/2}= \{ (\widetilde{g}, h)\in \widetilde{GSp}(W_1) \times GO(W_2) | \widetilde{\lambda}(\widetilde{g}) \lambda(h)=1\}$$
où $\widetilde{\lambda}$ est le morphisme $\widetilde{GSp}(W_1) \longrightarrow F^{'\times}$. Alors on a un morphisme de groupes $\widetilde{\Gamma}^{\frac{1}{2}} \stackrel{i_{\frac{1}{2}}}{\longrightarrow} \widetilde{Sp}(W); \big( (g, \epsilon), h\big) \longmapsto (g \otimes h, \epsilon)$ pour $g\in GSp(W_1), \epsilon \in \C^{\times}, h\in GO(W_2)$,  et $\lambda(g)\lambda(h)=1$.
\end{prop}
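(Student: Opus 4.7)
On travaillera avec le modèle latticiel ou de Schrödinger d'une représentation de Weil fixée $(\omega_{\psi},S)$ du groupe métaplectique $\widetilde{Sp}(W)$. La stratégie est de construire $i_{\frac{1}{2}}$ en utilisant la propriété universelle du groupe métaplectique $\widetilde{Sp}_{\psi}(W)$ rappelée avant le Théorème \ref{scindagedugroupeR}, puis d'établir la multiplicativité via un calcul de cocycle.

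D'abord, pour $(g,h)\in GSp(W_1)\times GO(W_2)$ avec $\lambda(g)\lambda(h)=1$, on vérifie directement à partir de la formule donnant la forme symplectique sur $W=W_1\otimes_{F'}W_2$ (cf. Lemme \ref{leproduittensoriel}) que $g\otimes h\in Sp(W)$; ainsi $\Gamma^{1/2}\longrightarrow Sp(W),\ (g,h)\longmapsto g\otimes h$, est un morphisme de groupes bien défini. Notons $\widetilde{\Gamma}^{1/2}_0$ l'image réciproque de l'image de ce morphisme dans $\widetilde{Sp}(W)$; c'est une extension centrale de $\Gamma^{1/2}$ par $\C^{\times}$. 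Sa restriction à $Sp(W_1)\times \{1\}$ n'est autre que l'image réciproque de $Sp(W_1)$ dans $\widetilde{Sp}(W)$, qui égale $\widetilde{Sp}(W_1)$ en vertu de la remarque suivant le Théorème \ref{scindagedugroupeR} (car la restriction des scalaires avec $W_2$ de dimension impaire est précisément le cas exceptionnel où le rev\^etement métaplectique ne se scinde pas).

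Ensuite, on fixe $\widetilde{GSp}(W_1)$ une extension quelconque de $GSp(W_1)$ par $\C^{\times}$ contenant $\widetilde{Sp}(W_1)$, décrite par un $2$-cocycle $c_{GSp}$ prolongeant le cocycle métaplectique. Par restriction, le cocycle de Rao fournit un cocycle $c'$ sur $\Gamma^{1/2}$ défini par $c'((g_1,h_1),(g_2,h_2))=c_{Rao}(g_1\otimes h_1,g_2\otimes h_2)$. Le point principal à démontrer est que la différence $c'-c_{GSp}\circ \pr_1$ est un cobord, c'est-à-dire que $c'$ ne dépend, à cobord près, que des composantes dans $GSp(W_1)$; pour cela, on utilise le scindage du groupe orthogonal $GO(W_2)$ dans $\widetilde{Sp}(W)$ (qui existe par le Théorème \ref{scindagedugroupeR} appliqué à la paire $(Sp(W_1),O(W_2))$ et son extension aux similitudes par l'argument d'induction des Lemmes \ref{scindagehyperbolique} à \ref{scindagequaternions}). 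Quitte à modifier $i_{\frac{1}{2}}$ par un caractère de $\Lambda_{\Gamma^{1/2}}\subset F'^{\times}$ (l'ambiguité dans le choix de $\widetilde{GSp}(W_1)$ étant précisément un tel caractère de $GSp(W_1)/Sp(W_1)\simeq F'^{\times}$), on obtient la formule voulue $i_{\frac{1}{2}}((g,\epsilon),h)=(g\otimes h,\epsilon)$.

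Enfin, la vérification de la multiplicativité se réduit à l'identité
$$c_{GSp}(g_1,g_2)=c_{Rao}(g_1\otimes h_1,g_2\otimes h_2)\quad \textrm{pour }(g_i,h_i)\in \Gamma^{1/2},$$
qui résulte de l'étape précédente. L'obstacle principal est ce calcul cohomologique: il faut montrer que la dépendance en $h_1,h_2$ dans $c_{Rao}(g_1\otimes h_1,g_2\otimes h_2)$ dispara\^it grâce à la scission explicite sur $GO(W_2)$, ce qui se traite par une analyse cas par cas selon la décomposition de Witt de $W_2$, en suivant la méthode des Lemmes \ref{scindagehyperbolique}--\ref{scindagequaternions}. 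Une fois ceci établi, $i_{\frac{1}{2}}$ est un morphisme de groupes.
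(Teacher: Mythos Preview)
Your approach has a genuine gap. You propose to use ``le scindage du groupe orthogonal $GO(W_2)$ dans $\widetilde{Sp}(W)$'', justified by Théorème \ref{scindagedugroupeR} and the Lemmes \ref{scindagehyperbolique}--\ref{scindagequaternions}. But $GO(W_2)$ does not embed in $Sp(W)$ at all: if $h\in GO(W_2)$ has $\lambda(h)\neq 1$, then $1\otimes h\notin Sp(W)$, so there is no metaplectic cover of $GO(W_2)$ to split. What does embed is only $O(W_2)$, and Théorème \ref{scindagesurlepaireduale} indeed gives a splitting over $O(W_2)$; however this does not by itself control the cocycle $c_{Rao}(g_1\otimes h_1,g_2\otimes h_2)$ when the $h_i$ have nontrivial similitude factor. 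Moreover, the Lemmes \ref{scindagehyperbolique}--\ref{scindagequaternions} and Théorème \ref{scindagedugroupeR} explicitly \emph{exclude} the present case ($W_1$ symplectic, $\dim_{F'}W_2$ impaire): the remarque following that theorem recalls that here $\overline{\Gamma}$ is \emph{not} scindé over $\Gamma$. So the cohomological shortcut you invoke is unavailable precisely in the situation of this proposition.

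The paper's proof avoids this by an explicit, coordinate-dependent computation. One fixes a symplectic basis of $W_1$ and the associated polarisation $W=X\otimes W_2\oplus X^{\star}\otimes W_2$, and writes every element of $\widetilde{\Gamma}^{1/2}$ as a product of an element of $\widetilde{Sp}(W_1)\times O(W_2)$ and an element $(\widetilde{g_t},h_t)$ with $g_t=\begin{pmatrix}1&0\\0&t\end{pmatrix}$. The key observation, replacing your missing splitting, is that $g_t\otimes h_t=\begin{pmatrix}h_t&0\\0&th_t\end{pmatrix}$ lies in the Siegel parabolic $P(X^{\star}\otimes W_2)$, where the chosen cocycle $c_W$ is identically $1$. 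Combining this with the identity $c_{W_1}(g_1,g_2)=c_W(g_1\otimes h_1,g_2\otimes h_2)$ for $g_i\in Sp(W_1),\ h_i\in O(W_2)$ (obtained from the splittings $s_1,s_2$ over $\widetilde{Sp}(W_1)$ and $O(W_2)$), a direct four-term computation yields multiplicativity of $i_{1/2}$. This is the missing idea: rather than a splitting over similitudes, one uses that the ``extra'' similitude part can be pushed into a parabolic where the Rao cocycle is trivial.
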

\begin{proof}
Soient $\{ e_1, \cdots, e_n; e_1^{\star}, \cdots, e_n^{\star}\}$ une base symplectique de $W_1$, $X$ (resp.  $X^{\star}$) le lagrangien engendré par les $e_i$ (resp.  $e_i^{\star}$), $\{ f_1, \cdots, f_{2m-1}\}$ une base orthogonale de $W_2$ sur $F'$. On sait que $W=X\otimes W_2 \oplus X^{\star} \otimes W_2$ (resp.  $W_1=X\oplus X^{\star}$) est une polarisation complète. Sous les polarisations ci-dessus, on sait que le groupe $\widetilde{GSp}(W_1)$ (resp.  $\widetilde{GSp}(W)$) est engendré par $\widetilde{Sp}(W_1)$ (resp.  $\widetilde{Sp}(W)$) avec $F^{'\times}$.  On désigne par  $c_{W}$ le $2$-cocycle associé au groupe $\widetilde{Sp}(W)$ lié à $\psi$ et  $X^{\star} \otimes W_2$ [cf. \cite{MVW}, chapitre 1 ou \cite{Kud2}].\\
(i) $\widetilde{Sp}(W_1) \times O(W_2)$ est un sous-groupe de $\widetilde{\Gamma}^{\frac{1}{2}}$. D'après le Théorème \ref{scindagesurlepaireduale}, on prend le $2$-cocycle $c_{W_1}$ de $H^2(Sp(W_1), \C^{\times})$, tel que
$$\widetilde{Sp}(W_1) \stackrel{s_1}{\longrightarrow} \widetilde{Sp}(W);$$
$$ \widetilde{g}=(g, \epsilon) \longmapsto (g \otimes 1, \epsilon),$$
définit un homomorphisme de groupes, i.e. $c_{W_1}(g_1,g_2)=c_W(g_1\otimes 1, g_2\otimes 1)$ pour $g_1,g_2 \in Sp(W_1)$.
 Comme $O(W_2)$ est un sous-groupe du groupe parabolique $P(X^{\star} \otimes W_2)$, on a un morphisme
$$O(W_2) \stackrel{s_2}{\longrightarrow} \widetilde{Sp}(W);$$
$$h \longmapsto (1 \otimes h, 1).$$
Il s'ensuit que
$$s_1((g,\epsilon)) s_2(h)= (g\otimes 1, \epsilon) \cdot (1 \otimes h, 1)= (g\otimes h, c_{W}(g\otimes 1, 1 \otimes h) \epsilon)= (g\otimes h, \epsilon)=i_{\frac{1}{2}}(\widetilde{g}, h).$$
 Par le résultat de Waldspurger, on sait que $s_1(\widetilde{g})$ commute avec $s_2(h)$, donc  $i_{\frac{1}{2}}|_{\widetilde{Sp}(W_1) \times O(W_2)}$ est un homomorphisme de groupes.\\
De plus, soient $\widetilde{g_1}=(g_1, \epsilon_1), \widetilde{g_2}=(g_2, \epsilon_2) \in \widetilde{Sp}(W_1)$ et $h_1, h_2 \in O(W_2)$. On a
$$i_{\frac{1}{2}} \Big( (\widetilde{g_1} \widetilde{g_2}, h_1h_2)\Big)= i_{\frac{1}{2}} \Big( (\widetilde{g_1}, h_1)\cdot (\widetilde{g_2}, h_2)\Big)=
(g_1 \otimes h_1, \epsilon_1) \cdot (g_2 \otimes h_2, \epsilon_2).$$
Il en résulte que
$$c_{W_1}(g_1, g_2)=c_W(g_1 \otimes h_1, g_2 \otimes h_2).$$
(ii) On définit
$\Lambda(\widetilde{\Gamma}^{\frac{1}{2}})=\{ (\widetilde{g_t}, h) \in \widetilde{\Gamma}^{\frac{1}{2}} | \widetilde{g_t}=(g_t, \epsilon) \textrm{ avec } g_t = \begin{pmatrix}
1& 0\\
0&t
\end{pmatrix}, \textrm{ pour } t \in F^{\times}, \epsilon \in \C^{\times}, h\in GO(W_2) \}$.\\
Si on prend deux élements $(\widetilde{g_{t_i}}, h_i)= \big( (g_{t_i}, \epsilon_i), h_i\big)$ de $\Lambda( \widetilde{\Gamma}^{\frac{1}{2}})$ pour $i=1, 2$, alors
$$i_{\frac{1}{2}}\big( (\widetilde{g_{t_i}}, h_i)\big)= (g_{t_i}\otimes h_i, \epsilon_i)= \bigg( \begin{pmatrix}
h_i& 0\\
0&t_ih_i
\end{pmatrix}, \epsilon_i\bigg).$$
$$c_{W}(g_{t_1}\otimes h_1, g_{t_2}\otimes h_2)= c_{W} \bigg( \begin{pmatrix}
h_1& 0\\
0&t_1h_1
\end{pmatrix}, \begin{pmatrix}
h_2& 0\\
0&t_2h_2
\end{pmatrix} \bigg)=1.$$
Donc
$$i_{\frac{1}{2}}\big( (\widetilde{g_{t_1}}, h_1)\big) i_{\frac{1}{2}}\big( (\widetilde{g_{t_2}}, h_2) \big)= (g_{t_1t_2} \otimes h_1h_2, \epsilon_1\epsilon_2).$$
De plus
$$( \widetilde{g_{t_1}}, h_1) (\widetilde{g_{t_2}}, h_2) = (\widetilde{g_{t_1}} \widetilde{g_{t_2}}, h_1h_2) \stackrel{c_{W_1}(g_{t_1}, g_{t_2})=c_{W}(g_{t_1}\otimes 1, g_{t_2}\otimes 1)=1}{=} \bigg( (g_{t_1t_2} , \epsilon_1\epsilon_2), h_1h_2\bigg).$$
$$i_{\frac{1}{2}}\bigg( \big( ( g_{t_1t_2}, \epsilon_1 \epsilon_2), h_1h_2\big) \bigg) = \big( g_{t_1t_2} \otimes h_1 h_2, \epsilon_1 \epsilon_2\big).$$
Il en résulte que
$$i_{\frac{1}{2}}\bigg( (\widetilde{g_{t_1}}, h_1) (\widetilde{g_{t_2}}, h_2) \bigg)= i_{\frac{1}{2}}\bigg( (\widetilde{g_{t_1}}, h_1)\bigg) i_{\frac{1}{2}}\bigg( (\widetilde{g_{t_2}}, h_2)\bigg),$$
i.e. $i_{\frac{1}{2}}|_{\Lambda(\widetilde{\Gamma}^{\frac{1}{2}})}$ est un homomorphisme de groupes.\\
(iii) Soient $(\widetilde{g}, h)= \Big( (g, \epsilon), h \Big) \in \widetilde{\Gamma}^{\frac{1}{2}}$ et $(\widetilde{g}, h)= (\widetilde{g_0}, h_0) \cdot (\widetilde{g_t}, h_t)$ avec $(\widetilde{g_0}, h_0)=\Big( (g_0,\epsilon), h_0\Big) \in \widetilde{Sp}(W_1) \times O(W_2)$, $(\widetilde{g_t}, h_t)=\Big( (g_t, 1), h_t \Big) \in \Lambda(\widetilde{\Gamma}^{\frac{1}{2}})$; alors
$$i_{\frac{1}{2}}\big( (\widetilde{g}, h)\big)= (g\otimes h, \epsilon)= (g_0 \otimes h_0, \epsilon) (g_t\otimes h_t, 1)= i_{\frac{1}{2}}\Big( (\widetilde{g_0}, h_0) \Big) i_{\frac{1}{2}}\Big((\widetilde{g_t}, h_t)\Big).$$
(iv) En général, soient $(\widetilde{g}^{(i)}, h^{(i)})=\bigg( (g^{(i)}, \epsilon^{(i)}), h^{(i)}\bigg) \in \widetilde{\Gamma}^{\frac{1}{2}}$ pour $i=1, 2$, où
$(\widetilde{g}^{(i)}, h^{(i)})= (\widetilde{g_0}^{(i)}, h_0^{(i)}) (\widetilde{g_t}^{(i)}, h_t^{(i)})$ avec $(\widetilde{g_0}^{(i)}, h_0^{(i)})= \Big( (g_0^{(i)}, \epsilon^{(i)}), h_0^{(i)}\Big) \in \widetilde{Sp}(W_1) \times O(W_2)$ et $ (\widetilde{g_t}^{(i)}, h_t^{(i)})=\Big( (g_t^{(i)}, 1), h_t^{(i)} \Big) \in \Lambda(\widetilde{\Gamma}^{\frac{1}{2}})$; alors
$$(\widetilde{g}^{(1)}, h^{(1)}) (\widetilde{g}^{(2)}, h^{(2)})= \bigg( (g_0^{(1)}, \epsilon^{(1)}), h_0^{(1)}\bigg) \bigg( (g_t^{(1)}, 1), h_t^{(1)}\bigg) \bigg((g_0^{(2)}, \epsilon^{(2)}), h_0^{(2)}\bigg) \bigg( (g_t^{(2)}, 1), h_t^{(2)}\bigg)$$
$$\stackrel{\textrm{ La restriction du cocycle $c_W$ au groupe parabolique minimal est triviale}}{=} \bigg( (g_0^{(1)}, \epsilon^{(1)}), h_0^{(1)}\bigg) \bigg( \big( g_t^{(1)} g_0^{(2)} (g_t^{(1)})^{-1}, \epsilon^{(2)}\big), h_t^{(1)}h_0^{(2)} (h_t^{(1)})^{-1}\bigg)$$ $$ \bigg( \big(g_t^{(1)}, 1\big), h_t^{(1)}\bigg) \bigg( \big( g_t^{(2)},1\big), h_t^{(2)}\bigg)$$
$$= \bigg( \Big( g_0^{(1)} g_t^{(1)} g_0^{(2)} (g_t^{(1)})^{-1}, c_{W_1} \big( g_0^{(1)}, g_t^{(1)} g_0^{(2)} (g_t^{(1)})^{-1}\big) \epsilon_1^{(1)} \epsilon^{(2)}\Big), h_0^{(1)} h_t^{(1)} h_0^{(2)} (h_t^{(1)})^{-1}\bigg) \bigg(\big( g_t^{(1)}g_t^{(2)}, 1\big), h_t^{(1)} h_t^{(2)}\bigg).$$
Donc
$$i_{\frac{1}{2}}\bigg( (\widetilde{g}^{(i)}, h^{(i)})\bigg)= \bigg( g_0^{(i)}\otimes h_0^{(i)}, \epsilon^{(i)}\bigg) \bigg( g_t^{(i)}\otimes h_t^{(i)}, 1\bigg).$$
$$i_{\frac{1}{2}}\bigg( (\widetilde{g}^{(1)}, h^{(1)})\bigg) i_{\frac{1}{2}}\bigg( (\widetilde{g}^{(2)}, h^{(2)})\bigg) = \bigg( g_0^{(1)} \otimes h_0^{(1)}, \epsilon^{(1)}\bigg) \bigg( g_t^{(1)}\otimes h_t^{(1)}, 1 \bigg) \bigg( g_0^{(2)} \otimes h_0^{(2)}, \epsilon^{(2)}\bigg) \bigg( g_t^{(2)} \otimes h_t^{(2)}, 1 \bigg),$$
et
$$i_{\frac{1}{2}}\bigg( \Big(\widetilde{g}^{(1)}, h^{(1)}\Big) \Big( \widetilde{g}^{(2)}, h^{(2)}\Big)\bigg) = \bigg( g_0^{(1)} g_t^{(1)} g_0^{(2)} (g_t^{(1)})^{-1} \otimes h_0^{(1)} h_t^{(1)} h_0^{(2)} (h_t^{(1)})^{-1}, \epsilon^{(1)} \epsilon^{(2)} c_{W_1} (g_0^{(1)}, g_t^{(1)} g_0^{(2)} (g_t^{(1)})^{-1})\bigg) $$ $$ \bigg( g_t^{(1)} g_t^{(2)} \otimes h_t^{(1)} h_t^{(2)}, 1\bigg)$$
$$\stackrel{c_{W_1}\big(g_0^{(1)}, g_t^{(1)} g_0^{(2)} (g_t^{(1)})^{-1}\big)=c_{W} (g_0^{(1)} \otimes h_0^{(1)}, g_t^{(1)} g_0^{(2)} (g_t^{(1)})^{-1} \otimes h_t^{(1)} \otimes h_0^{(2)} (h_t^{(1)})^{-1})}{=} \bigg( g_0^{(1)} \otimes h_0^{(1)}, \epsilon^{(1)}\bigg)$$ $$ \bigg( g_t^{(1)} g_0^{(2)} (g_t^{(1)})^{-1} \otimes h_t^{(1)} h_0^{(2)} (h_t^{(1)})^{-1}, \epsilon^{(2)}\bigg) \bigg( g_t^{(1)} g_t^{(2)} \otimes h_t^{(1)} h_t^{(2)}, 1\bigg)$$
$$\stackrel{c_W|_{P\times G}=1}{=} \bigg( g_0^{(1)} \otimes h_0^{(1)}, \epsilon^{(1)}\bigg) \bigg(g_t^{(1)} \otimes h_t^{(1)}, 1\bigg) \bigg( g_0^{(2)} \otimes h_0^{(2)}, \epsilon^{(2)}\bigg) \bigg( g_t^{(2)} \otimes h_2^{(2)}, 1 \bigg)= i_{\frac{1}{2}}\bigg((\widetilde{g}^{(1)}, h^{(1)})\bigg)i_{\frac{1}{2}}\bigg( (\widetilde{g}^{(2)}, h^{(2)})\bigg).$$
Ceci termine  la démonstration!
\end{proof}
\section{Représentations de bigraphe forte pour les groupes des similitudes I}\label{bigraphefortedesimilitudes1}
Dans cette sous-section, nous expliquont  comment on obtient une  représentation de bigraphe forte pour les groupes des similitudes.

Supposons que $F$ est un corps local non archimédien de caractéristique  résiduelle impaire $p$. Fixons un caractère non trivial $\psi: F\longrightarrow \C$. Soient  $W$  un espace symplectique de dimension $2n$ sur $F$, $\omega_{\psi}$ la représentation de Weil du groupe $\overline{Sp}(W)$ . Si $H$ est un sous-groupe réductif de $Sp(W)$, nous notons $\overline{H}$  son image réciproque dans $\overline{Sp}(W)$ .

 On  désignera par  $A$ un groupe abélien fini d'ordre $n$ avec $2|n$, et $(p,n)=1$.

Soient $D$ un corps de centre $F'$, $F'$ une extension finie de $F$,  $W=W_1\otimes_D W_2$ une décomposition en produit tensoriel. Ainsi $(U(W_1), U(W_2))$ est une paire réductive duale irréductible de $Sp(W)$; nous notons  $\Gamma$ le groupe intermédiaire associé. On a une suite exacte
$$1 \longrightarrow U(W_1) \times U(W_2) \longrightarrow \Gamma \stackrel{\lambda}{ \longrightarrow} \Lambda_{\Gamma} \longrightarrow 1$$
où $\Lambda_{\Gamma}=\{ \lambda(g_1)=\lambda(g_2)^{-1} | (g_1, g_2) \in \Gamma\}$ est un sous-groupe de $F^{'\times }$. On a aussi les suites exactes
$$ 1 \longrightarrow U(W_i) \longrightarrow GU(W_i) \stackrel{\lambda}{\longrightarrow} \Lambda_{GU(W_i)} \longrightarrow 1,$$
où $ \Lambda_{GU(W_i)}$ est un sous-groupe de $F^{' \times}$ contenant $\Lambda_{\Gamma}$.On notera
$$G^{\Gamma}U(W_i)= \textrm{ l'image réciproque de } \Lambda_{\Gamma} \textrm{ dans } GU(W_i). $$

\ \\
(i) Tout d'abord, supposons que $W= W_1\otimes_D W_2$ n'est pas le cas dans la Proposition \ref{saufcasscindage}. D'après le Théorème \ref{scindagedugroupeR}
, l'extension $\overline{\Gamma}$ est scindée au-dessus de $\Gamma$, on peut obtenir un homomorphisme qui \underline{n'est pas unique} en général
$$\Gamma \stackrel{i_{\Gamma}}{\longrightarrow} \overline{Sp}(W).$$
Donc on obtient une représentation lisse du groupe $\Gamma$ par  restriction de la représentation de Weil à $\Gamma$, on note  $\rho_{\psi}$ cette représentation de $\Gamma$.
\ \\
Pour les groupes, on a les relations suivantes
$$G^{\Gamma}U(W_i) / U(W_i) \simeq \Lambda_{\Gamma}$$
et
$$ \Gamma/{U(W_1) \times U(W_2)} \simeq \Lambda_{\Gamma}.$$
D'après la Proposition \ref{lamedavalues}, $\Lambda_{\Gamma} = F^{' \times}$ ou $F^{' 2}$ ou bien $\nnn_{E'/F'}(E^{'\times})$ avec $E' /F'$ une extension quadratique, il en résulte que $F^{' \times}/{\Lambda_{\Gamma}}$ est un groupe abélien de cardinal fini.\\
De plus, par défintion, $G^{\Gamma}U(W_i) \supset F^{'\times } U(W_i)$ et $G^{\Gamma}U(W_i)/{ F^{'\times } U(W_i)}$ est un groupe abélien de cardinal  fini.
\begin{thm}\label{scindéecas1}
Si on définit $$\pi=c-\Ind_{\Gamma}^{G^{\Gamma}U(W_1) \times G^{\Gamma}U(W_2)} \rho_{\psi},$$
alors $\pi$ est une représentation de bigraphe forte.
\end{thm}
\begin{proof}
Ceci résulte du Théorème \ref{abelienbigraphedefini}.
\end{proof}
\  \\
(ii) Soit $W \simeq W_1 \otimes_{F'} W_2$ un espace symplectique sur $F'$ où $W_1$( resp.  $W_2$) est un espace symplectique (resp.  orthogonal) de dimension $2n$ ( resp.  $2m-1$) avec $n\geq 1$ et $m\geq 1$. On a une suite exacte
$$1  \longrightarrow Sp(W_1) \longrightarrow GSp(W_1) \longrightarrow F^{'\times} \longrightarrow 1.$$
On déduit de la suite exacte de Hochschild-Serre
$$\cdots \longrightarrow H^2(F^{'\times}, A) \longrightarrow H^2( GSp(W_1), A) \stackrel{d}{\longrightarrow} H^2(Sp(W_1) , A)  \longrightarrow \cdots $$
Prenons un élément $[c]$ de $H^2(GSp(W_1), A)$ tel que $d([c])=[c_{Rao}]$. La classe $[c]$ détermine un groupe $\widetilde{GSp}^A(W_1)$. On a une suite exacte
$$1 \longrightarrow \widetilde{Sp}^A(W_1) \longrightarrow \widetilde{GSp}^A(W_1)  \stackrel{\widetilde{\lambda}}{\longrightarrow} \Lambda_{\widetilde{GSp}^A(W_1)}=F^{'\times} \longrightarrow 1.$$
Nous pouvons définir un sous-groupe intermédiaire
$$\widetilde{\Gamma}^{A \frac{1}{2}}= \{ (\widetilde{g}, h) \in \widetilde{GSp}^A(W_1) \times GO(W_2) | \widetilde{\lambda}(\widetilde{g})\lambda(h)=1\}.$$
Considérons l'application $H^2(GSp(W_1), A) \longrightarrow H^2(GSp(W_1), \C^{\times})$, l'image de $[c]$  détermine le groupe $\widetilde{GSp}(W_1)$. Donc  on peut regarder $\widetilde{GSp}^A(W_1)$ comme un sous-groupe de $\widetilde{GSp}(W_1)$. Il en résulte que $\widetilde{\Gamma}^{A\frac{1}{2}}$ est un sous-groupe de $\widetilde{\Gamma}^{\frac{1}{2}}$ défini dans la Proposition  \ref{saufcasscindage}.
Donc d'après la Proposition \ref{saufcasscindage}, on peut obtenir un morphisme de groupes
$$ \widetilde{\Gamma}^{A \frac{1}{2}} \stackrel{i_{A\frac{1}{2}}}{\longrightarrow} \widetilde{Sp}^A(W).$$
Gr\^ace à ce morphisme,  on obtient une représentation lisse du groupe $\widetilde{\Gamma}^{A \frac{1}{2}}$ par restriction de la  représentation de Weil, notée  $\rho_{\psi}$. Par suite, on a des suites exactes
$$1 \longrightarrow \widetilde{Sp}^A(W_1) \times O(W_2) \longrightarrow \widetilde{\Gamma}^{A \frac{1}{2}} \stackrel{\widetilde{\lambda}}{\longrightarrow} \Lambda_{\widetilde{\Gamma}^{A \frac{1}{2}}} \longrightarrow 1$$
et
$$ 1 \longrightarrow O(W_2) \longrightarrow GO(W_2) \stackrel{\lambda}{\longrightarrow} \Lambda_{GO(W_2)} \longrightarrow 1.$$
Par définition, $\Lambda_{\widetilde{\Gamma}^{A \frac{1}{2}}} \subset \Lambda_{GO(W_2)} \subset \Lambda_{\widetilde{GSp}^A(W_1)}=F^{'\times}$, nous noterons  $G^{\widetilde{\Gamma}^{A \frac{1}{2}}}O(W_2)$ (resp.  $G^{\widetilde{\Gamma}^{A \frac{1}{2}}}\widetilde{Sp}^A(W_1)$) l'image réciproque de $\Lambda_{\widetilde{\Gamma}^{A \frac{1}{2}}}$ dans $GO(W_2)$ (resp.  $\widetilde{GSp}^A(W_1)$). Pour les groupes, on a les relations
$$G^{\widetilde{\Gamma}^{A \frac{1}{2}}}\widetilde{Sp}^A(W_1) / \widetilde{Sp}^A(W_1) \simeq G^{\widetilde{\Gamma}^{A \frac{1}{2}}}O(W_2) /O(W_2) \simeq \widetilde{\Gamma}^{A \frac{1}{2}}/{\widetilde{Sp}^A(W_1) \times O(W_2)} \simeq \Lambda_{\widetilde{\Gamma}^{A \frac{1}{2}}}.$$
D'après la Proposition \ref{lamedavalues}, les groupes $F^{'\times}/{\Lambda_{\widetilde{\Gamma}^{A \frac{1}{2}}}}$, ou $G^{\widetilde{\Gamma}^{A \frac{1}{2}}}O(W_1)/{F^{'\times}O(W_1)}$ sont abéliens d'ordre fini.

\begin{thm}
Si on définit $$\pi=c-\Ind_{\widetilde{\Gamma}^{A \frac{1}{2}}}^{G^{\widetilde{\Gamma}^{A \frac{1}{2}}}\widetilde{Sp}^A(W_1) \times G^{\widetilde{\Gamma}^{A \frac{1}{2}}}O(W_2)} \rho_{\psi},$$
alors $\pi$ est une représentation de bigraphe forte.
\end{thm}
\begin{proof}
Ceci découle du Théorème \ref{abelienbigraphedefini}.
\end{proof}
\begin{rem}
Pour présenter simplement le résultat, nous  notons aussi $G^{\Gamma}U(W_1) \times G^{\Gamma}U(W_1)$ (resp.  $\Gamma$) représenté $G^{\widetilde{\Gamma}^{A \frac{1}{2}}}\widetilde{Sp}^A(W_1) \times G^{\widetilde{\Gamma}^{A \frac{1}{2}}}O(W_2)$ ( resp.  $\widetilde{\Gamma}^{A \frac{1}{2}}$ ) en cas (ii) ci-dessus.
\end{rem}
\begin{rem}
(1) La définition de $\pi$ dépend du choix de l'homomorphisme $\Gamma\stackrel{i_{\Gamma}}{\longrightarrow}Mp(W)$, pour obtenir une correspondance intéressante, il faut choisir le morphisme \textbf{approprié} $\Gamma \stackrel{i_{\Gamma}}{\longrightarrow} Mp(W)$.\\
(2) Pour le modèle realisé la représentation de bigraphe $\pi$ du groupe $G^{\Gamma}U(W_1) \times G^{\Gamma}U(W_2)$, nous  indiquons une voie possible: le premier étape, on peut essayer de écrire le modèle realisé la représentation $\rho_{\psi}$ du groupe intermédiaire $\Gamma$ qui est une extension de $U(W_1) \times U(W_2)$ par un groupe abélien $\Lambda_{\Gamma}$ en utilisant les démonstrations des Lemme \ref{scindagehyperbolique}--- Lemme \ref{scindagequaternions} et de la  Proposition \ref{saufcasscindage}; le deuxiéme étape, utilisons la technique canonique en représentation locale pour trouver finalement un modèle de $c-\Ind_{\Gamma}^{G^{\Gamma}U(W_1) \times G^{\Gamma}U(W_2)} \rho_{\psi}$.
\end{rem}
\begin{rem}
Si $W= W^{(1)} \oplus W^{(2)}$ avec $W^{(i)}=W_1^{(i)} \otimes_{D^{(i)}}W_2^{(i)}$,   on sait que $\Big(U(W_1^{(1)}) \times U(W_1^{(2)}), U(W_2^{(1)})\times U(W_2^{(2)})\Big)$ est une  paire réductive duale de $Sp(W)$, en ce cas,  on prend les groupes intermédiaires $\Gamma^{(i)}$ et construit les représentations $\pi^{(i)}$ de bigraphe forte des groupes $G^{\Gamma^{(i)}}U(W_1^{(i)}) \times G^{\Gamma^{(i)}}U(W_2^{(i)})$ pour $i=1,2$. D'après la Proposition \ref{bigraphesproduits}, $\pi_1 \times \pi_2$   est aussi une représentation du bigraphe forte du groupe $\Big(G^{\Gamma^{(1)}}U(W_1^{(1)}) \times G^{\Gamma^{(2)}}U(W_1^{(2)})\Big) \times \Big(G^{\Gamma^{(1)}}U(W_2^{(1)}) \times G^{\Gamma^{(2)}}U(W_2^{(2)})\Big)$.
\end{rem}
\subsection{Exemples I}: Reprenons les notations au début de cette sous-section. Supposons qu'on a des décompositions de Witt $W_i= W_i^0 \oplus m_i H_i$ avec $W_i^0$ un espace anisotrope et $H_i$ le plan hyperbolique.

\subsubsection{Cas (1)}
Soient $D=F'$, $\epsilon_1=-1$, $\epsilon_2=1$, $U(W_1)=Sp(W_1)$, $U(W_2)=O(W_2)$; $GU(W_1)=GSp(W_1)$, $GU(W_2)=GO(W_2)$.\\

\paragraph{(i)} $\dim_{F'} W_2^0=0,4$, $\Gamma=\{(g,h) \in GSp(W_1) \times GO(W_2)| \lambda(g) \lambda(h)=1\}$, $\Lambda_{\Gamma}=F^{'\times}$, $G^{\Gamma}Sp(W_1)=GSp(W_1)$, $G^{\Gamma}O(W_2)=GO(W_2)$.  Donc $\pi=c-\Ind_{\Gamma}^{GSp(W_1) \times GO(W_2)} \rho_{\psi}$ est une représentation de bigraphe forte.\\

\paragraph{(ii)} $\dim_{F'} W_2^0=1$, $\Gamma^A:=\widetilde{\Gamma}^{A \frac{1}{2}}=\{(\widetilde{g},h) \in \widetilde{GSp}^A(W_1)\times GO(W_2)| \widetilde{\lambda}(\widetilde{g})\lambda(h)=1\}$, $\Lambda_{\widetilde{\Gamma}^{A \frac{1}{2}}}=F^{\times 2}$, $\widetilde{GSp}^A_+(W_1):= G^{\widetilde{\Gamma}^{A \frac{1}{2}}}\widetilde{Sp}^A(W_1)=\{\widetilde{g} \in \widetilde{GSp}^A(W_1) | \widetilde{\lambda}(\widetilde{g}) \in F^{\times 2}\}$, $G^{\widetilde{\Gamma}^{A \frac{1}{2}}}O(W_2)=GO(W_2)$. Donc $\pi=c-\Ind_{\Gamma^A}^{\widetilde{GSp}^A_+(W_1)\times GO(W_2)} \rho_{\psi}$ est une représentation de bigraphe forte.\\

\paragraph{(iii)} $\dim_{F'} W_2^0=2$, $W_2^0=E'(f)$ où $E'$ est une extension quadratique de $F'$, $f=1$ ou $f\in F' \backslash \nnn_{E'/F'}(E^{'\times})$. $\Gamma=\{ (g, h) \in GSp(W_1) \times GO(W_2) | \lambda(g) \lambda(h)=1\}$, $\Lambda_{\Gamma}=\nnn_{E'/F'}(E^{'\times})$, $GSp_+(W_1):=G^{\Gamma}Sp(W_1)=\{ g\in GSp(W_1)| \lambda(g)\in \nnn_{E'/F'}(E^{'\times})\}$, $G^{\Gamma}O(W_2)=GO(W_2)$. Donc $\pi=c-\Ind_{\Gamma}^{GSp_+(W_1) \times GO(W_2)} \rho_{\psi}$ est une représentation de bigraphe forte.\\

\paragraph{(iv)} $\dim_{F'} W_2^0=3$, $\Gamma^A:= \widetilde{\Gamma}^{A \frac{1}{2}}=\{ (\widetilde{g}, h) \in \widetilde{GSp}^A(W_1) \times GO(W_2)| \widetilde{\lambda}(\widetilde{g}) \lambda(h)=1\}$, $\Lambda_{\Gamma}=F^{'\times}$, $G^{\Gamma^A}\widetilde{Sp}^A(W_1)=\widetilde{GSp}^A(W_1)$, $G^{\Gamma^A}O(W_2)=GO(W_2)$. Donc $\pi=c-\Ind_{\Gamma^A}^{\widetilde{GSp}^A(W_1) \times GO(W_2)} \rho_{\psi}$ est une représentation de bigraphe forte.

\subsubsection{Cas (2)}
Soient $D=E'$ une extension quadratique de $F'$,
$\Gamma=\{ (g, h) \in GU(W_1) \times GU(W_2) | \lambda(g) \lambda(h)=1\}$.\\

\paragraph{(i)} Si $\dim_{E'} W_1$ et $\dim_{E'} W_2$ sont paires. En ce cas, $\Lambda_{\Gamma}=F^{'\times}$, $G^{\Gamma}U(W_i)=GU(W_i)$, $\pi=c-\Ind_{\Gamma}^{GU(W_1) \times GU(W_2)}\rho_{\psi}$ est une représentation de bigraphe forte.\\

\paragraph{(ii)} Si $\dim_{E'} W_1$, $\dim_{E'}W_2$ sont impaires, $\Lambda_{\Gamma}=\nnn_{E'/F'}(E^{'\times})$, $G^{\Gamma}U(W_i)=GU(W_i)$, $\pi=c-\Ind_{\Gamma}^{GU(W_1) \times GU(W_2)} \rho_{\psi}$ est une représentation de bigraphe forte.\\

\paragraph{(iii)} L'autre cas, supposons que $\dim_{E'} W_1$ est paire et $\dim_{E'} W_2$ est impaire. $\Lambda_{\Gamma}=\nnn_{E'/F'}(E^{'\times})$, $GU_+(W_1):= G^{\Gamma}U(W_1)=\{ g\in GU(W_1)| \lambda(g)\in \nnn_{E'/F'}(E^{'\times})\}$. $G^{\Gamma}U(W_2)=GU(W_2)$. Donc $\pi=c-\Ind_{\Gamma}^{GU_+(W_1) \times GU(W_2)} \rho_{\psi}$ est une représentation de bigraphe forte.

\subsubsection{Cas (3)}
Soient  $D$ l'unique corps de quaternions sur $F'$, $G^{\Gamma}U(W_i)=GU(W_i)$.  Alors $\pi=c-\Ind_{\Gamma}^{GU(W_1) \times GU(W_2)} \rho_{\psi}$ est une représentation de bigraphe forte.
\subsubsection{Cas (1)'}
Soient $D=F'$, $\epsilon_1=-1$, $\epsilon_2=1$, $U(W_1)=Sp(W_1)$, $U(W_2)=O(W_2)$; $GU(W_1)=GSp(W_1)$, $GU(W_2)=GO(W_2)$.\\

\paragraph{(i)'} Si $\dim_{F'}W_2$ est paire. \\
Pour chaque une extension quadratique $E'$ de $F'$, on définit: $G^{E'}Sp(W_1)=\{ g\in GSp(W_1)| \lambda(g)\in  \nnn_{E'/F'}(E^{'\times})\}$, $G^{E'}O(W_2)=\{ h\in GO(W_2)| \lambda(h)\in \nnn_{E'/F'}(E^{'\times})\}$, et un autre sous-groupe intermédiaire  $\Gamma^{E'}=\{ (g,h) \in G^{E'}Sp(W_1) \times G^{E'}O(W_2)| \lambda(g)\lambda(h)=1\}$ de $\Gamma$. Alors $\pi^{E'}=c-\Ind_{\Gamma^{E'}}^{G^{E'}Sp(W_1) \times G^{E'}O(W_2)} \rho_{\psi}|_{\Gamma^{E'}}$ est une représentation de bigraphe forte.\\

\paragraph{(ii)'} Si $\dim_{F'}W_2$ est impaire.\\
Dans ce cas, on définit: $\widetilde{GSp}^A_+(W_1)=\{ \widetilde{g} \in \widetilde{GSp}^A(W_1)| \widetilde{\lambda}(\widetilde{g})\in F^{'\times 2}\}$, $GO_+(W_2)=\{ h\in GO(W_2)| h\in GO(W_2) \lambda(h) \in F^{\times 2}\}$,  un autre sous-groupe intermédiaire  $\Gamma^A_+=\{ (g,h) \in \widetilde{GSp}^A_+(W_1) \times GO_+(W_2)| \widetilde{\lambda}(\widetilde{g})\lambda(h)=1\}$ de $\widetilde{\Gamma}^{A \frac{1}{2}}$. Alors $\pi_+=c-\Ind_{\Gamma^A_+}^{\widetilde{GSp}^A_+(W_1) \times GO_+(W_2)} \rho_{\psi}|_{\Gamma^A_+}$ est aussi une représentation de bigraphe forte.

\subsubsection{Cas (2)'}
Soit  $D=E'$ une extension quadratique de $F'$. Si on définit $GU^{E'}(W_i)= \{ g\in GU(W_i)| \lambda(g) \in \nnn_{E'/F'}(E^{'\times})\}$, $\Gamma^{E'}=\{ (g,h)\in GU^{E'}(W_i)| \lambda(g) \lambda(h)=1\}$, alors $\pi^{E'}=c-\Ind_{\Gamma^{E'}}^{GU^{E'}(W_1) \times GU^{E'}(W_2)} \rho_{\psi}|_{\Gamma^{E'}} $ est aussi une représentation de bigraphe forte.

\section{Représentations de bigraphe forte pour les groupes des similitudes II}
Dans cette sous-section, en suite à considérer comment on obtient une  représentation du bigraphe forte pour les groupes des similitudes, on va traiter les cas non scindés.\\

Reprenons les notations au début de la sous-section \ref{bigraphefortedesimilitudes1}.\\

(i)  Supposons que $W= W_1\otimes_D W_2$ n'est pas le cas dans la Proposition \ref{saufcasscindage}. Rappelons  la définition du groupe intermédiaire
$$\Gamma=\{ (g,h)\in GU(W_1) \times GU(W_2) | \lambda(g)\lambda(h)=1\}.$$
D'après le Théorème \ref{scindagedugroupeR}, l'extension $\overline{\Gamma}$ est scindée au-dessus de $\Gamma$, et  on obtient une représentation lisse du groupe $\Gamma$ par restriction de la représentation de Weil à $\Gamma$, on la note $\rho_{\psi}$. De plus, pour chaque groupe $\widetilde{GU}^A(W_i)$,  on a les suites exactes
$$1 \longrightarrow \widetilde{U}^A(W_i) \longrightarrow \widetilde{GU}^A(W_i) \stackrel{\lambda}{\longrightarrow} \Lambda_{\widetilde{GU}^A(W_i)}=\Lambda_{GU(W_i)} \longrightarrow 1.$$
On notera
$$\widetilde{\Gamma}^{A}= \{ (\widetilde{g}, \widetilde{h})\in \widetilde{GU}^A(W_1) \times \widetilde{GU}^A(W_2)| \widetilde{\lambda}(\widetilde{g})\widetilde{\lambda}(\widetilde{h})=1\},$$
donc on a la suite exacte
$$1 \longrightarrow \widetilde{U}^A(W_1) \times \widetilde{U}^A(W_2) \longrightarrow \widetilde{\Gamma}^A \stackrel{\lambda}{\longrightarrow} \Lambda_{\widetilde{\Gamma}^A}=\Lambda_{\Gamma} \longrightarrow 1.$$
On notera
$$ G^{\widetilde{\Gamma}^A}\widetilde{U}^A(W_i)= \textrm{ l'image réciproque de } \Lambda_{\widetilde{\Gamma}^A} \textrm{ par l'application  de }\lambda.$$
\'Evidemment, on a un homomorphisme
$$\widetilde{\Gamma}^A \stackrel{p}{\longrightarrow} GU(W_1) \times GU(W_2);$$
son image est $\Gamma$.  Donc on obtient une représentaion $\rho_{\psi}|_{\widetilde{\Gamma}^A}$ de $\widetilde{\Gamma}^A$, notée $\widetilde{\rho}_{\psi}$.
\begin{thm}
Si on définit
$$\pi= c-\Ind_{\widetilde{\Gamma}^A}^{G^{\widetilde{\Gamma}^A}\widetilde{U}^A(W_1) \times G^{\widetilde{\Gamma}^A}\widetilde{U}^A(W_2)} \widetilde{\rho}_{\psi},$$
alors $\pi$ est une représentation de bigraphe forte.
\end{thm}
\begin{proof}
Ceci découle du Théorème \ref{abelienbigraphedefini} et du Théorème \ref{restrictionadmissible}.
\end{proof}
\  \\
(ii) Soit $W \simeq W_1 \otimes_{F'} W_2$ un espace symplectique sur $F'$ où $W_1$( resp.  $W_2$) est un espace symplectique (resp.  orthogonal ) de dimension $2n$ ( resp.  $2m-1$) avec $n\geq 1$ et $m\geq 1$. On a deux  suites exactes
$$1  \longrightarrow Sp(W_1) \stackrel{i}{ \longrightarrow}  GSp(W_1) \stackrel{\lambda}{\longrightarrow} F^{'\times} \longrightarrow 1,$$
et
$$1 \longrightarrow  O(W_2)  \stackrel{i}{ \longrightarrow} GO(W_2) \stackrel{\lambda}{\longrightarrow} \Lambda_{GO(W_2)} \longrightarrow 1.$$
On déduit des  suites exactes de Hochschild-Serre
$$\cdots \longrightarrow H^2(F^{'\times}, A) \longrightarrow H^2( GSp(W_1), A) \stackrel{i^2}{\longrightarrow} H^2(Sp(W_1) , A)  \longrightarrow \cdots$$
et
$$\cdots \longrightarrow H^2(\Lambda_{GO(W_2)}, A) \longrightarrow H^2( GO(W_2), A) \stackrel{i^2}{\longrightarrow} H^2(O(W_2) , A)  \longrightarrow \cdots.$$
Prenons un élément $[c_{GSp}]$ (resp.  chaque $[c_{GO}]$)  de $H^2(GSp(W_1), A)$ ( resp.  $H^2(GO(W_2), A)$) tel que $i^2([c_{GSp}])=[c_{Rao}]$. La classe $[c_{GSp}]$ (resp. $[c_{GO}]$) détermine un groupe $\widetilde{GSp}^A(W_1)$( resp.  $\widetilde{GO}^A(W_2)$).\\
Rappelons que nous avons déjà défini un sous-groupe intermédiaire dans le cas scindé (ii)
$$\widetilde{\Gamma}^{A\frac{1}{2}}=\{ (\widetilde{g}, h)\in \widetilde{GSp}^A(W_1) \times GO(W_2) | \widetilde{\lambda}(\widetilde{g}) \lambda(h)=1\}.$$
Gr\^ace au morphisme
$$\widetilde{\Gamma}^{A\frac{1}{2}} \stackrel{\widetilde{i}_{\frac{1}{2}}}{\longrightarrow} \widetilde{Sp}(W),$$
on obtient une représentation lisse du groupe $\widetilde{\Gamma}^{A\frac{1}{2}}$, i.e. $\rho_{\psi}=\omega_{\psi}|_{\widetilde{\Gamma}^{A\frac{1}{2}}}$, où $\omega_{\psi}$ est la représentation de Weil du groupe $\widetilde{Sp}(W)$.\\
\underline{Dans le cas non scindé}, on peut définir un autre sous-groupe intermédiaire
$$\widetilde{\Gamma}^A=\{ (\widetilde{g}, \widetilde{h}) \in \widetilde{GSp}^A(W_1)  \times \widetilde{GO}^A(W_2) | \widetilde{\lambda}(\widetilde{g}) \widetilde{\lambda}(\widetilde{h})=1\}.$$
On a les suites exactes canoniques
$$1 \longrightarrow \widetilde{Sp}^A(W_1) \longrightarrow \widetilde{GSp}^A(W_1) \stackrel{\lambda}{\longrightarrow} \Lambda_{\widetilde{GSp}^A(W_1)}=F^{' \times} \longrightarrow 1 \cdots (1)$$
$$1 \longrightarrow \widetilde{O}^A(W_2) \longrightarrow \widetilde{GO}^A(W_2) \stackrel{\lambda}{\longrightarrow} \Lambda_{\widetilde{GO}^A(W_2)}= \Lambda_{GO(W_2)} \longrightarrow 1 \cdots (2)$$
et
$$1 \longrightarrow \widetilde{Sp}^A(W_1) \times \widetilde{O}^A (W_2) \longrightarrow \widetilde{\Gamma}^A \longrightarrow \Lambda_{\widetilde{\Gamma}^A} \longrightarrow  1.$$
On notera
$$G^{\widetilde{\Gamma}^A}\widetilde{Sp}^A (W_1)\textrm{  l'image réciproque de } \Lambda_{\widetilde{\Gamma}^A} \textrm{ par l'application } \lambda \textrm{ dans la suite } (1),$$
et
$$G^{\widetilde{\Gamma}^A}\widetilde{O}^A(W_2) \textrm{  l'image réciproque de } \Lambda_{\widetilde{\Gamma}^A} \textrm{ par l'application } \lambda \textrm{ dans la suite } (2).$$
Comme on a l'homorphisme canonique
$$1 \longrightarrow A \longrightarrow  \widetilde{GO}^A(W_2) \longrightarrow GO(W_2) \longrightarrow 1,$$
on obtient un homomorphisme
$$\widetilde{\Gamma}^A \longrightarrow \widetilde{GSp}^A(W_1) \times  GO(W_2).$$
Son image est $\widetilde{\Gamma}^{A\frac{1}{2}}$, ainsi on a une représentation lisse $\rho_{\psi}|_{\widetilde{\Gamma}^A}$ du groupe $\widetilde{\Gamma}^A$, notée  $\widetilde{\rho}_{\psi}$.\\
Rappels: pour la représentation de Weil $\omega_{\psi}$, on sait que $\omega_{\psi}|_{\widetilde{Sp}^A(W_1) \times O(W_2)}$ est une représentation de bigraphe forte.
On  définit
$$\widetilde{O}^A(W_2)=\{ \widetilde{g} \in \widetilde{GO}^A(W_2) | \widetilde{\lambda}( \widetilde{g})=1 \}$$
d'où une suite exacte
$$1 \longrightarrow A \longrightarrow \widetilde{O}^A(W_2) \stackrel{ p^A}{\longrightarrow } O(W_2) \longrightarrow 1.$$
Gr\^ace au morphisme $p^A$, on obtient une représentation $\omega_{\psi}|_{\widetilde{Sp}^A(W_1) \times \widetilde{O}^A(W_2)}$. Par la démonstration de la Remarque \ref{ouvertmorphisme},
$\omega_{\psi}|_{\widetilde{Sp}^A(W_1) \times \widetilde{O}^A(W_2)}$ est une représentation de bigraphe forte. Par définition
$$\widetilde{\rho}_{\psi}|_{\widetilde{Sp}^A(W_1) \times \widetilde{O}^A(W_2)}=\omega_{\psi}|_{\widetilde{Sp}^A(W_1) \times \widetilde{O}^A(W_2)}.$$
\begin{thm}
Si on définit
$$\pi=c-\Ind_{\widetilde{\Gamma}^A}^{G^{\widetilde{\Gamma}^A}\widetilde{Sp}^A(W_1) \times G^{\widetilde{\Gamma}^A}\widetilde{O}^A(W_2)} \widetilde{\rho}_{\psi},$$
alors $\pi$ est une représentation de bigraphe forte.
\end{thm}
\begin{proof}
Ceci découle du Théorème \ref{abelienbigraphedefini}.
\end{proof}
\subsection{Exemples II}
Soient $F$ un corps local non archimédien  de caractéristique résiduelle impaire $p$, $F'$ une extension finie de $F$, et $D$ un corps de quaternions sur $F'$  muni de l'involution canonique $\tau$.\\

$(W_1, \langle, \rangle_1)$( resp.  $(W_2, \langle, \rangle_2)$) un espace $\epsilon_1$( resp.  $\epsilon_2$)-hermitien sur $D$ à droite( resp.  à gauche) tel que $\epsilon_1 \epsilon_2=-1$, $W=W_1 \otimes_D W_2$ un espace symplectique sur $F$, muni de la forme $\tr_{F'/F}\big(\langle, \rangle_1 \otimes \tau(\langle, \rangle_2)\big)$.\\

$\omega_{\psi}$ la représentation de Weil liée au caractère non trivial $\psi$, $U(W_i)$ (resp.  $GU(W_i)$)  le groupe unitaire (resp.   le groupe unitaire de similitudes) de $(W_i, \langle, \rangle_i)$, $A$ un groupe abélien fini fixé d'ordre $n$ et $2| n, (n,p)=1$; $\widetilde{GU}(W_i)$ le revêtement central de $GU(W_i)$ par $A$  défini à la sous-section \ref{appendice2}, $\widetilde{\lambda}$ le morphisme canonique de $\widetilde{GU}(W)$ dans $F^{'\times }$, $\widetilde{U}(W_i)$ son noyau et $\Lambda_{\widetilde{GU}}(W_i)$ son image.\\

Notons $\widetilde{\Gamma}$ le groupe intermédiaire constitué des éléments $(\widetilde{g}, \widetilde{h})$ de $\widetilde{GU}(W_1) \times \widetilde{GU}(W_2)$ tels que $\widetilde{\lambda}(\widetilde{g})\widetilde{\lambda}(\widetilde{h})=1$; $\widetilde{\Gamma} \stackrel{i_{\widetilde{\Gamma}}}{\longrightarrow} \widetilde{Sp}(W)$ le homomorphisme naturel et   $\widetilde{\rho}_{\psi}=\omega_{\psi}|_{\widetilde{\Gamma}}$.\\

Choisissons des décompositions de Witt $W_i= W_i^0 \oplus m_i H_i$ avec $W_i^0$ un espace anisotrope et $H_i$ le plan hyperbolique.
\subsubsection{Cas (1)}
Soient  $D=F'$, $\epsilon_1=-1$, $\epsilon_2=1$,
$U(W_1)=Sp(W_1)$, $U(W_2)=O(W_2)$; $GU(W_1)=GSp(W_1)$, $GU(W_2)=GO(W_2)$.

Discutons suivant la dimension de la partie anisotrope de $W_2$.\\

\paragraph{(i)} $\dim_{F'} W_2^0=0,4$, $\Lambda_{\widetilde{\Gamma}}=F^{'\times}$, $\widetilde{GSp}^{\widetilde{\Gamma}}(W_1)=\widetilde{GSp}(W_1)$, $\widetilde{GO}^{\widetilde{\Gamma}}(W_2)=\widetilde{GO}(W_2)$.  Donc $\pi=c-\Ind_{\widetilde{\Gamma}}^{\widetilde{GSp}(W_1) \times \widetilde{GO}(W_2)} \widetilde{\rho}_{\psi}$ est une représentation de bigraphe forte.\\

\paragraph{(ii)} $\dim_{F'} W_2^0=1$,  $\Lambda_{\widetilde{\Gamma}}=F^{'\times 2}$, $\widetilde{GSp}^{+}(W_1):= \widetilde{GSp}^{\widetilde{\Gamma}}(W_1)=\{\widetilde{g} \in \widetilde{GSp}(W_1) | \widetilde{\lambda}(\widetilde{g}) \in F^{'\times 2}\}$, $\widetilde{GO}^{\widetilde{\Gamma}}(W_2)=\widetilde{GO}(W_2)$. Donc $\pi=c-\Ind_{\widetilde{\Gamma}}^{\widetilde{GSp}^+(W_1)\times \widetilde{ GO}(W_2)} \widetilde{\rho}_{\psi}$  est une représentation de bigraphe forte.\\

\paragraph{(iii)} $\dim_{F'} W_2^0=2$, $W_2^0=E'(f)$ où $E'$ est une extension quadratique $F'$, $f=1$ ou $f\in F' \backslash \nnn_{E'/F'}(E^{'\times})$. $\Lambda_{\widetilde{\Gamma}}=\nnn_{E'/F'}(E^{\times})$, $\widetilde{GSp}^+(W_1):=\widetilde{GSp}^{\widetilde{\Gamma}}(W_1)=\{ \widetilde{g}\in \widetilde{GSp}(W_1)| \widetilde{\lambda}(\widetilde{g})\in \nnn_{E/F}(E^{'\times})\}$, $\widetilde{GO}^{\widetilde{\Gamma}}(W_2)=\widetilde{GO}(W_2)$. Donc $\pi=c-\Ind_{\widetilde{\Gamma}}^{\widetilde{GSp}^+(W_1) \times \widetilde{GO}(W_2)} \widetilde{\rho}_{\psi}$ est une représentation de bigraphe forte.\\

\paragraph{(iv)} $\dim_{F'} W_2^0=3$,  $\Lambda_{\widetilde{\Gamma}}=F^{'\times}$, $\widetilde{GSp}^{\widetilde{\Gamma}}(W_1)=\widetilde{GSp}(W_1)$, $\widetilde{GO}^{\widetilde{\Gamma}}(W_2)=\widetilde{GO}(W_2)$. Donc $\pi=c-\Ind_{\widetilde{\Gamma}}^{\widetilde{GSp}(W_1) \times \widetilde{GO}(W_2)} \widetilde{\rho}_{\psi}$  est une représentation de bigraphe forte.
\subsubsection{Cas (2)}
Soit  $D=E'$ une extension quadratique de $F'$.\\

\paragraph{(i)} Si $\dim_{E'} W_1$ et $\dim_{E'} W_2$ sont paires. En ce cas, $\Lambda_{\widetilde{\Gamma}}=F^{'\times}$, $\widetilde{GU}^{\widetilde{\Gamma}}(W_i)=\widetilde{GU}(W_i)$, $\pi=c-\Ind_{\widetilde{\Gamma}}^{\widetilde{GU}(W_1) \times \widetilde{GU}(W_2)} \widetilde{\rho}_{\psi}$  est une représentation de bigraphe forte.\\

\paragraph{(ii)} Si $\dim_{E'} W_1$, $\dim_{E'} W_2$ sont impaires, $\Lambda_{\widetilde{\Gamma}}=\nnn_{E'/F'}(E^{'\times})$, $\widetilde{GU}^{\widetilde{\Gamma}}(W_i)=\widetilde{GU}(W_i)$, $\pi=c-\Ind_{\widetilde{\Gamma}}^{\widetilde{GU}(W_1) \times \widetilde{GU}(W_2)} \widetilde{\rho}_{\psi}$ est une représentation de bigraphe forte.\\

\paragraph{(iii)} Supposons que $\dim_{E'} W_1$ est paire et $\dim_{E'} W_2$ est impaire. Alors $\Lambda_{\widetilde{\Gamma}}=\nnn_{E'/F'}(E^{'\times})$, $\widetilde{GU}_+(W_1):= \widetilde{GU}^{\widetilde{\Gamma}}(W_1)=\{ \widetilde{g}\in \widetilde{GU}(W_1)| \widetilde{\lambda}(\widetilde{g})\in \nnn_{E'/F'}(E^{'\times})\}$. $\widetilde{GU}^{\widetilde{\Gamma}}(W_2)=\widetilde{GU}(W_2)$. Donc $\pi=c-\Ind_{\widetilde{\Gamma}}^{\widetilde{GU}_+(W_1) \times \widetilde{GU}(W_2)} \widetilde{\rho}_{\psi}$ est une représentation de bigraphe forte.
\subsubsection{Cas (3)}
Soient  $D$ l'unique corps de quaternions sur $F'$, $\widetilde{GU}^{\widetilde{\Gamma}}(W_i)=\widetilde{GU}(W_i)$. Alors  $\pi=c-\Ind_{\widetilde{\Gamma}}^{\widetilde{GU}(W_1) \times \widetilde{GU}(W_2)} \rho_{\psi}$ est une représentation de bigraphe forte.
\subsubsection{Cas (1)'}
Soient $D=F'$, $\epsilon_1=-1$, $\epsilon_2=1$,
$U(W_1)=Sp(W_1)$, $U(W_2)=O(W_2)$; $GU(W_1)=GSp(W_1)$, $GU(W_2)=GO(W_2)$.\\

\paragraph{(i)'} Si $\dim_{F'}W_2$ est paire. \\
Pour chaque une extension quadratique $E'$ de $F'$, On définit: $\widetilde{GSp}^{E'}(W_1)=\{ \widetilde{g}\in \widetilde{GSp}(W_1)| \widetilde{\lambda}(\widetilde{g})\in  \nnn_{E'/F'}(E^{'\times})\}$, $\widetilde{GO}^{E'}(W_2)=\{ \widetilde{h}\in \widetilde{GO}(W_2)| \widetilde{\lambda}(\widetilde{h})\in \nnn_{E'/F'}(E^{'\times})\}$,  et un autre sous-groupe intermédiaire  $\widetilde{\Gamma}^{E'}=\{ (\widetilde{g},\widetilde{h}) \in \widetilde{GSp}^{E'}(W_1) \times \widetilde{GO}^{E'}(W_2)| \widetilde{\lambda}(\widetilde{g})\widetilde{\lambda}(\widetilde{h})=1\}$ de $\widetilde{\Gamma}$. Alors $\pi^{E'}=c-\Ind_{\widetilde{\Gamma}^{E'}}^{\widetilde{GSp}^{E'}(W_1) \times \widetilde{GO}^{E'}(W_2)} \rho_{\psi}|_{\widetilde{\Gamma}^{E'}}$ est une représentation de bigraphe forte.\\

\paragraph{(ii)'} Si $\dim_{F'}W_2$ est impaire.\\
Dans ce cas, on définit: $\widetilde{GSp}_+(W_1)=\{ \widetilde{g} \in \widetilde{GSp}(W_1)| \widetilde{\lambda}(\widetilde{g})\in F^{'\times 2}\}$, $\widetilde{GO}_+(W_2)=\{ \widetilde{h}\in \widetilde{GO}(W_2)|  \widetilde{\lambda}(\widetilde{h}) \in F^{'\times 2}\}$, et  un autre sous-groupe intermédiaire  $\widetilde{\Gamma}_+=\{ (\widetilde{g},\widetilde{h}) \in \widetilde{GSp}_+(W_1) \times \widetilde{GO}_+(W_2)| \widetilde{\lambda}(\widetilde{g})\widetilde{\lambda}(\widetilde{h})=1\}$ de $\widetilde{\Gamma}$, alors $\pi_+=c-\Ind_{\widetilde{\Gamma}_+}^{\widetilde{GSp}_+(W_1) \times \widetilde{GO}_+(W_2)} \rho_{\psi}|_{\widetilde{\Gamma}_+}$ est une représentation de bigraphe forte.
\subsubsection{Cas (2)'}
Soit $D=E'$ une extension quadratique de $F'$.
Si on définit $\widetilde{GU}^{E'}(W_i)= \{ \widetilde{g}\in \widetilde{GU}(W_i)| \widetilde{\lambda}(\widetilde{g}) \in \nnn_{E'/F'}(E^{'\times})\}$, $\widetilde{\Gamma}^{E'}=\{ (\widetilde{g},\widetilde{h})\in \widetilde{GU}^{E'}(W_i)| \widetilde{\lambda}(\widetilde{g}) \widetilde{\lambda}(\widetilde{h})=1\}$, alors $\pi^{E'}=c-\Ind_{\widetilde{\Gamma}^{E'}}^{\widetilde{GU}^{E'}(W_1) \times \widetilde{GU}^{E'}(W_2)} \rho_{\psi}|_{\widetilde{\Gamma}^{E'}} $ est aussi une représentation de bigraphe forte.
\section{Appendice I. Des lemmes en représentation locale}
En vue d'application, nous citons et montrons des résultats en représentation locale. Soient $G$ un groupe localement compact totalement discontinu, $H$ un sous-groupe fermé de $G$, $\Delta_G$ (resp. $\Delta_H$) les fonctions unimodulaires de $G$ (resp. $H$)[cf. \cite{BernZ}, Page 10]; $(\pi, V)$ (resp. $(\rho, W)$) une représentation lisse de $G$ (resp. $H$), et $(\check{\pi}, \check{V})$ (resp. $(\check{\sigma}, \check{W})$) la représentation contragriénte de $(\pi, V)$ (resp. $(\sigma, W)$).
\begin{lemme}\label{contradigiente}
$$(c-\Ind_H^G\sigma)^{\vee} \simeq  \Ind_H^G \check{\sigma} \Delta_G/{\Delta_H}.$$
\end{lemme}
\begin{proof}
Ceci découle de [\cite{BernZ}, Page 23]
\end{proof}
\begin{lemme}
Supposons $\Delta_G/{\Delta_H}=1$. Si $(\sigma, W)$ est une représentation lisse admissible de $H$, alors
$$\Ind_H^G \sigma \simeq (c-\Ind_H^G \check{\sigma})^{\vee}.$$
En particulier, si $\Ind_H^G \sigma$ est admissible, alors $(\Ind_H^G \sigma)^{\vee} \simeq c-\Ind_H^G \check{\sigma}$.
\end{lemme}
\begin{proof}
C'est une consequence du lemme \ref{contradigiente}.
\end{proof}
\begin{lemme}\label{deltaequality}
\begin{itemize}
 \item[(i)] Si $H$ est un sous-groupe ouvert de $G$, alors $\Delta_H=\Delta_G|_H$.
 \item[(ii)] Si $H$ est un sous-groupe distingué de $G$, et que $G/H$ est abélien, alors $\Delta_H=\Delta_G|_H$.
 \end{itemize}
\end{lemme}
\begin{proof}
(1) Par hypothèse, on a une suite exacte
$$1 \longrightarrow S^{\star} (H\setminus G) \longrightarrow S^{\star}(G) \stackrel{p_{H\setminus G}^{\star}}{\longrightarrow} S^{\star}(H) \longrightarrow 1.$$
Si $\mu_G$ est une mesure duale de Haar à gauche de $G$, alors $p_{H\setminus G}^{\star}(\mu_G)$ l'est aussi de $H$, d'où le résultat.\\
(2) Si $\mu_H$ est une mesure duale de Haar à gauche de $H$, et que $\mu_{G/H}$ est une mesure duale de Haar de $G/H$. Rappelons qu'on a un homomorphisme
$$ S(G) \longrightarrow S(G/H);$$
 $$ f \longmapsto \overline{f},$$
  où $\overline{f}(gH):=\int_{H} f(gh) \mu_H(h)$. Comme $\bigg(g^{-1}\supp(f)\bigg) \cap H$ et $\supp(f)H/H$ sont compacts, on voit que $\overline{f}$ est bien définie. On définit un élément $\mu_G \in S^{\star}(G)$ de la façon suivante:\\
  $$\mu_G(f):=\int_{G/H}\overline{f}(\overline{g}) d\mu_{G/H}(\overline{g}).$$
  Si on définit une application $\rho(g_0):G \longrightarrow G; g \longmapsto g_0 g$, alors $\mu_G(\rho(g_0) f)=\int_{G/H} \overline{\rho(g_0) f} (\overline{g}) d\mu_{G/H}(\overline{g})$ $=\int_{G/H} \overline{f}(\overline{g}_0^{-1} \overline{g}) d\mu_{G/H}(\overline{g})=\mu_G(f)$, i.e. $\mu_G$ est une mesure duale de Haar à gauche du groupe $G$. Si $\delta(g): G \longrightarrow G; g' \longmapsto g' g^{-1}$, alors par définition[cf. \cite{BernZ}], $\delta(g)\mu_G=\Delta(g) \mu_G$. En particulier, pour $h_0\in H$, $\overline{\delta(h_0^{-1} )f} (\overline{g})=\int_Hf(ghh_0) d\mu_H(h)=\Delta_H(h_0) \overline{f}(\overline{g})$. Il en résulte que $\Big( \delta(h_0) \mu_G\Big) (f) = \mu_G\Big( \delta(h_0^{-1})f\Big)$ $=\int_{G/H} \overline{\delta(h_0^{-1}) f}(\overline{g}) d\mu_{G/H}(\overline{g})$
  $= \int_{G/H} \Delta_H(h_0) \overline{f}(\overline{g})d\mu_{G/H}(\overline{g})=\Delta_H(h_0) \mu_G(f)$, d'où le résultat.
\end{proof}

\begin{lemme}\label{admissibleduale}
Si $\pi|_H$ est une représentation lisse admissible du groupe $H$, alors $(\pi|_H)^{\vee} \simeq \check{\pi}|_H$.
\end{lemme}
\begin{proof}
Par hypothèse, $\pi$ est aussi une représentation lisse admissible de $G$, et on a des homomorphismes  canoniques
$$\check{\pi}|_H \hookrightarrow (\pi|_H)^{\vee} \qquad \cdots (1)$$
et
$$ \pi|_H  \hookrightarrow (\check{\pi}|_H)^{\vee} \qquad \cdots (2).$$
D'après (1), on sait que $\check{\pi}|_H$ est aussi une représentation lisse admissible de $H$.\\
 D'après (2), on a
$$\check{\pi}|_H \simeq (\check{\pi}|_H)^{\vee \vee} \twoheadrightarrow (\pi|_H)^{\vee},$$
donc on trouve le résultat.
\end{proof}
\begin{thm}[BERNSTEIN, ZELEVINSKY]\label{frobeniusreciproquecind}
$$\Hom_G(c-\Ind_H^G \rho, \check{\pi}) \simeq \Hom_H(\frac{\Delta_G}{\Delta_H} \rho, (\pi|_H)^{\vee}).$$
\end{thm}
\begin{proof}
Ceci découle de [\cite{BernZ}, Pages 23-24, Proposition].
\end{proof}
\begin{prop}\label{frobeniusreciproquecindvariante}
Conservons les hypothèses dans le Lemme \ref{deltaequality} et le Lemme \ref{admissibleduale}. On a
$$\Hom_G(c-\Ind_H^G \rho, \pi) \simeq \Hom_H( \rho, \pi|_H).$$
\end{prop}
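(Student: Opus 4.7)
L'idée centrale est que cette proposition est une reformulation ``non dualisée'' du Théorème \ref{frobeniusreciproquecind} de Bernstein--Zelevinsky, et qu'on l'obtient en appliquant ce théorème à $\check{\pi}$ plut\^ot qu'à $\pi$, puis en se débarrassant de la contragrédiente et du facteur modulaire gr\^ace aux deux lemmes précédents.

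Plus précisément, je commencerais par remarquer que l'hypothèse ``$\pi|_H$ admissible'' entra\^ine que $\pi$ elle-m\^eme est admissible (comme observé dans la démonstration du Lemme \ref{admissibleduale}) et que $\check{\pi}|_H \simeq (\pi|_H)^{\vee}$ est également admissible; en particulier $(\check{\pi})^{\vee} \simeq \pi$ canoniquement. J'appliquerais alors le Théorème \ref{frobeniusreciproquecind} non pas à $(\pi,V)$ mais à $(\check{\pi},\check{V})$: on obtient
$$\Hom_G\bigl(c-\Ind_H^G \rho,\; (\check{\pi})^{\vee}\bigr) \simeq \Hom_H\bigl(\tfrac{\Delta_G}{\Delta_H}\rho,\; (\check{\pi}|_H)^{\vee}\bigr).$$
Le Lemme \ref{deltaequality} donne $\Delta_G/\Delta_H = 1$ (dans chacun des deux cas envisagés), donc le facteur modulaire à droite dispara\^it. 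Pour identifier $(\check{\pi}|_H)^{\vee}$, j'applique le Lemme \ref{admissibleduale} à $\check{\pi}$ en lieu et place de $\pi$ (ce qui est licite puisqu'on vient de voir que $\check{\pi}|_H$ est admissible), d'où $(\check{\pi}|_H)^{\vee} \simeq (\check{\pi})^{\vee}|_H \simeq \pi|_H$. Du c\^oté gauche, l'admissibilité de $\pi$ donne directement $(\check{\pi})^{\vee} \simeq \pi$. En composant ces isomorphismes, on aboutit à l'isomorphisme voulu.

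Le point à surveiller --- mais qui ne constitue pas un véritable obstacle --- est que tous les isomorphismes employés sont canoniques (biadjoint de la contragrédiente, fonctorialité de la restriction à $H$) et donc $G$- (resp.\ $H$-)équivariants, de sorte qu'ils se composent bien en un isomorphisme entre les deux espaces de $\Hom$ attendus. Aucune difficulté supplémentaire ne surgit: la proposition est une conséquence formelle du Théorème \ref{frobeniusreciproquecind} et des Lemmes \ref{deltaequality} et \ref{admissibleduale}.
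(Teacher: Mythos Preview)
Votre démonstration est correcte et suit exactement l'approche de l'article, qui se contente d'écrire ``Ceci est une conséquence du Théorème \ref{frobeniusreciproquecind}'' sans détailler. Vous avez explicité proprement la substitution $\pi \leadsto \check{\pi}$ dans ce théorème et l'usage des Lemmes \ref{deltaequality} et \ref{admissibleduale} pour éliminer le facteur modulaire et les contragrédientes; il n'y a rien à ajouter.
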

\begin{proof}
Ceci est une conséquence du Théorème \ref{frobeniusreciproquecind}.
\end{proof}
\begin{prop}\label{resctrictiondeind}
Si $G_1$ est un sous-groupe de $G$ tel que
$$H\cap G_1 \backslash G_1 \stackrel{e}{\longrightarrow} H\backslash G$$
est une bijection, alors

$$\Res_{G_1}^G\big( c-\Ind_H^G\rho\big) \simeq c-\Ind_{H\cap G_1}^{G_1} \big( \Res_{H\cap G_1}^H \rho\big).$$
\end{prop}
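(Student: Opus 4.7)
Ma stratégie est de construire directement un isomorphisme via la restriction des fonctions. On définit
\[
\Phi: c\!-\!\Ind_H^G\rho \longrightarrow c\!-\!\Ind_{H\cap G_1}^{G_1}\big(\Res_{H\cap G_1}^H \rho\big), \qquad \Phi(f):= f|_{G_1}.
\]
Il faut d'abord vérifier que $\Phi$ est bien définie: la fonction $f|_{G_1}$ est localement constante, et satisfait évidemment à $f|_{G_1}(h_1 g_1)=\rho(h_1)f|_{G_1}(g_1)$ pour tout $h_1 \in H\cap G_1, g_1 \in G_1$. Pour le support, je remarquerai que via la bijection $e$, le support de $f|_{G_1}$ modulo $H\cap G_1$ s'identifie à un sous-ensemble du support de $f$ modulo $H$; la compacité du premier résulte donc de celle du second. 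L'équivariance sous $G_1$, par translation à droite, est immédiate.

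L'injectivité de $\Phi$ découle de la surjectivité de $e$: si $\Phi(f)=0$, pour tout $g\in G$, la surjectivité de $e$ fournit une écriture $g=hg_1$ avec $h\in H$, $g_1\in G_1$, d'où $f(g)=\rho(h)f(g_1)=0$. Pour la surjectivité, on construit l'inverse $\Psi$: étant donné $f_1 \in c\!-\!\Ind_{H\cap G_1}^{G_1}(\Res_{H\cap G_1}^H\rho)$, on pose $f(hg_1):=\rho(h)f_1(g_1)$ pour $h\in H, g_1\in G_1$. La bonne définition repose sur l'injectivité de $e$: si $hg_1=h'g_1'$, alors $(h')^{-1}h = g_1'g_1^{-1}\in H\cap G_1$, et la $(H\cap G_1)$-équivariance de $f_1$ donne $\rho(h)f_1(g_1)=\rho(h')f_1(g_1')$. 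On vérifie alors directement que $f$ appartient au bon espace et que $\Phi\circ\Psi=\id$, $\Psi\circ\Phi=\id$.

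Le point technique principal est la vérification que la fonction étendue $f=\Psi(f_1)$ reste localement constante sur $G$, et que son support modulo $H$ est compact. La compacité du support se déduit de la bijection $e$: si $\overline{\Omega}\subset H\cap G_1 \setminus G_1$ est compact et contient le support de $f_1$, alors $e(\overline{\Omega})$ est un compact de $H\setminus G$ contenant le support de $f$. La locale constance de $f$ est le point délicat: elle devra s'appuyer sur l'hypothèse implicite que la bijection $e$ est en fait un homéomorphisme (cas typique lorsque $G_1$ est fermé et que la multiplication $H\times G_1 \to G$ est une application ouverte au niveau des quotients), hypothèse qui est vérifiée dans toutes les situations d'application envisagées dans l'article. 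Une fois ce point admis, l'isomorphisme $\Phi$ est automatiquement $G_1$-équivariant, ce qui conclut la démonstration.
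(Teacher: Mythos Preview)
Your approach via the restriction map $\Phi(f)=f|_{G_1}$ is exactly the map the paper uses (called $\gamma$ there), so the strategy is the same. However, you have left as admitted precisely the two technical points that constitute the bulk of the paper's proof.

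First, the paper does \emph{not} assume that $e$ is a homeomorphism: it proves it (step~(0)), by showing that $e$ is a closed map. You write that this is ``hypoth\`ese implicite \ldots\ v\'erifi\'ee dans toutes les situations d'application''; in fact it follows from the bijection hypothesis alone, and the paper gives the short argument.

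Second, and more seriously, the local constancy of $\Psi(f_1)$ on $G$ is not a formality even once $e$ is a homeomorphism. Writing $g'=h'g_1'$ for $g'$ near $g=hg_1$ does not immediately control $g_1'$ by an open compact subgroup of $G_1$: the section $H\backslash G \to G_1$ is only continuous modulo $H\cap G_1$, not as a map to $G_1$. The paper avoids constructing $\Psi$ directly and instead proves surjectivity by comparing $K_1$-invariants: for each open compact $K_1\subset G_1$ it shows, via a careful construction of an auxiliary open compact $K_0\subset G$, that every function on a system of representatives of $(H\cap G_1)\backslash G_1/K_1$ with finite support and the right isotropy lifts to an element of $(c\!-\!\Ind_H^G W)^{K_1}$. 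This is the long step~(2)(c) of the paper, and it is exactly the content you have waved past with ``une fois ce point admis''. Your sketch is correct in outline, but the proof as written is incomplete at its crux.
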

\begin{proof}
(0) \underline{Bijection=Homéomorphisme}:  $e$ est continue et bijective, il reste à vérifier ce qui est fermée. Soit $\overline{P}$ un ensemble fermé de $H\cap G_1 \setminus G_1$. On note $P$ l'image inverse de $\overline{P}$ dans $G_1$ par $G_1 \longrightarrow H_1\cap G_1 \setminus G_1$. Ce qui résulte que $H\cdot P$ est un ensemble fermé de $G$, et $\Big( H\setminus H\cdot P\Big)^c=H \setminus \big(H\cdot P\big)^c$. Donc $e(\overline{P})=H \setminus H\cdot P$ est aussi fermé dans $H \setminus G$, d'où le résultat. \\
(1)\underline{ Rappelons la définition}: \\
$c-\Ind_H^G(W)=\{ f: G \longrightarrow W| $
\begin{itemize}
\item[(a)] $f(hg)=\rho(h) f(g)$ pour $g\in G, h\in H$,
\item[(b)] il existe un sous-groupe ouvert compact $K$ de $G$( dépendant de $f$) tel que $f(gk)=f(g)$ pour $g\in G$, $k\in K $,
\item[(c)] il existe une partie compact $K_f$ de $G$ tel que $supp(f) \subset HK_f\}$.
\end{itemize}
(2)\underline{ Vectors $K_1$-invariants (i)}. Soit $K_1$ un sous-groupe ouvert compact de $G_1$ et soit $\Omega$ un système de représentants des doubles classes $H\cap G_1 \backslash G_1 /K_1$. Posons $K_{1_{g_1}}= H\cap G_1 \cap g_1K_1 g_1^{-1}$ pour $g_1 \in \Omega$. On sait que [cf. \cite{Rena}, Page 83, lemme]
 $$(c-\Ind_{H\cap G_1}^{G_1} \rho)^{K_1} \stackrel{i}{\simeq} \{ f: \Omega \longrightarrow W| f(g_1) \in W^{K_{1_{g_1}}} \textrm{ pour tout } g_1 \in \Omega \textrm{ et } f \textrm{ ait un support fini } \}, $$
 où $i$ est  la restriction des fonctions de $c-\Ind_{H\cap G_1}^{G_1} W$ à $\Omega$.\\
 (2)\underline{ Vectors $K_1$-invariants (ii)}. Maintenant, on considère $\Big( \Res_{G_1}^G c-\Ind_H^G W\Big)^{K_1}$. Par hypothèse, on a
 $$H_1 \setminus G_1/ K_1 \simeq H\setminus G / K_1.$$
 Reprenons $\Omega$ comme le système  de représentants des doubles classes $H \setminus G /K_1$ et $K_{1_{g_1}}=H \cap g_1K_1 g_1^{-1}= \big( H\cap G_1 \big) \cap g_1 K_1 g_1^{-1}$ pour $g_1 \in \Omega$. On considère la restriction des fonctions de $\big( c-\Ind_H^G W\big)^{K_1}$ à $\Omega$.\\
 (a) Toute fonction $f$ dans $\big(c-\Ind_H^G W\big)^{K_1}$, vérifie:
 $$ f(hg_1k_1) = \rho(h) f(g_1) \textrm{ pour } h\in H, g_1\in \Omega, k_1 \in K_1.$$
 Il en résulte que la restriction de $f$ à $\Omega$ détermine bien $f$.\\
 (b) Soient $g_1 \in \Omega$, $h\in K_{1_{g_1}}=H\cap g_1 K_1 g_1^{-1}$, $f \in \big(c-\Ind_H^G W\big)^{K_1}$,  on a
 $$\rho(h) f(g_1)=f(hg_1)=f(g_1g_1^{-1} h g_1)=f(g_1),$$
 donc $f(g_1)$ appartient à $W^{K_{1_{g_1}}}$.\\
 (c) Nous affirmons que toute fonction $f$ sur $\Omega$ de support fini à valeurs dans $W$ vérifiant $f(g) \in W^{K_{1_{g}}}$ pour $g\in \Omega$, peut se relever en une fonction de $\big(c-\Ind_H^G W\big)^{K_1}$. Il reste à montrer que $f\in c-\Ind_H^G W$.
 \begin{itemize}
\item[(i)] D'abord, le composé des applications   $G_1 \longrightarrow G_1/{H_1} \simeq G/H$ est continu et ouvert, donc pour un sous-groupe ouvert compact $C$ (resp. $C_1$) de $G$ (resp. $G_1$), et $g\in G_1$, on a les égalités
    $$HgC=HgC^{(1)},$$
    et
    $$HgC^{(0)}=HgC_1.$$
    pour un voisinage ouvert compact $C^{(1)}$ (resp. $C^{(0)}$) de l'élément $1_{G_1}$ (resp. $1_{G}$) dans $G_1$ (resp. $G$).
\item[(ii)] Soit $g\in \Omega$. Si $HgK_1=HgE_g$ pour un voisinage ouvert compact $E_g$ de l'élément $1_G$ dans $G$. On suppose que $K_1 \subset E\cap G_1$ pour un sous-groupe ouvert compact $E$ de $G$. Comme $HgE\supset HgE_g$, on suppose que $E_g \subset E$. On note
    $$E_{\Omega}:=\cap_{g\in \Omega} E_g,$$
    qui est un ensemble compact de $G$. On a l'égalité
    $$E\setminus E_{\Omega}= \cup_{g\in \Omega} (E \setminus E_g).$$
    Comme $E\setminus E_{\Omega}$ est compact, il existe un ensemble $\{ g^{(1)}, \cdots, g^{(n)}\}$ dans $G$, tels que
    $E \setminus E_{\Omega}=\cup_{i=1}^n E\setminus E_{g^{(i)}}$, d'où un ensemble ouvert compact $E_{\Omega}=\cap_{i=1}^n E_{g^{(i)}}$. On suppose que $E_{\Omega}$ contient un sous-groupe ouvert compact $E_0$ de $G$.\\
    Supposons que $\supp(f) \cap \Omega=\{ g_1, \cdots, g_n\}$. On prend un sous-groupe ouvert compact $F_0$ de $G$ tel que
    $$F_0 \subset E_0$$
    et
    $$F_0 \cap G_1= F_0^{(1)} \subseteq K_1.$$
\item[(iii)] Par définition, $f(g_i)=v_{g_i} \in W^{K_{1_{g_i}}}$. On prend des sous-groupes ouverts compacts $F_i$ de $G$ tels que
\begin{equation}\label{equationstar}
v_{g_i}\in W^{F_{i_{g_i}}}
\end{equation}
où  $F_{i_{g_i}}= g_i F_i g_i^{-1} \cap H$,
et  $$F_i \subseteq F_0.$$
Supposons que
$$Hg_i \big(F_i \cap G_1\big)\supseteq Hg_i L_i, $$
pour des sous-groupes ouverts compacts $L_i$ de $G$ satisfaisant à $L_i \subseteq F_i$ pour $1 \leq i \leq n$. \\
On définit un sous-groupe ouvert compact de $G$ de la façon suivante
$$K=\cap_{i=1}^n L_i.$$
Donc on a
$$Hg_i K=Hg_i K_i^{(1)} \subseteq Hg_i L_i \subseteq Hg_i \big( F_i \cap G_1\big).$$
On suppose que $K_i^{(1)} \subseteq F_i \cap G_1$. Si $k\in K$, on a
$$k=g_i^{-1} h_i g_i l_i$$
pour $h_i \in H \cap g_i F_i g_i^{-1}=F_{i_{g_i}}$, et $l_i \in K_i^{(1)} \subseteq F_i \cap G_1 \subseteq F_0 \cap G_1$.\\
Donc
$$f(g_i k)=f(g_ig_i^{-1}h_ig_il_i)=f(h_ig_il_i)=f(h_ig_i)$$
$$=\rho(h_i) f(g_i)=\rho(h_i)v_{g_i}\stackrel{ \textrm{ l'égalité (\ref{equationstar})}}{ =}v_{g_i}=f(g_i).$$
\item[(iv)] On prend
$$K_0= \cap_{k_1\in K_1} k_1^{-1} K k_1.$$
Comme $K_1K \subseteq E$, par la démonstration du point $(c)(ii)$, on voit que $K_0$ est un sous-groupe ouvert compact de $G$ et $K_0K_1=K_1K_0$. \\
D'abord, pour $k_1 \in K_1$, $k_0\in K_0$, on a $k_1k_0=k'_1k_0'$ pour $k_0' \in K_0, k_1' \in K_1$. On trouve
$$f(g_ik_1k_0)=f(g_ik_0'k_1')=f(g_ik_0')=f(g_i).$$
Si $g\in \Omega \setminus \{ g_1, \cdots, g_n\}$, on a
$$HgK_1 \subseteq HgK_1K_0 \subseteq HgK_0K_1 \subseteq HgE_g = HgK_1.$$
Donc, on a aussi
$$0=f(g)=f(gk_0) \textrm{ pour } g\in HgK_1 \textrm{ et } k_0\in K_0.$$
\end{itemize}
Finalement, on a affirmé l'assertion!\\
(3) \underline{L'isomorphisme.} On définit
 $$ \Res_{G_1}^G\big(c-\Ind_H^G W\big) \stackrel{\gamma}{\longrightarrow} c-\Ind_{H\cap G_1}^{G_1} W,$$
 $$f \longmapsto f|_{G_1}.$$
  Si $\supp(f) \subseteq HK$ pour un ensemble compact $K$ de $G$; comme $H_1\backslash G_1 \simeq H\backslash G$, on vérifie que $\supp(f|_{G_1}) \subseteq H_1K^{1}$ pour un ensemble compact $K^1$ de $G_1$. Donc $\gamma$ est bien définie, et on voit asiément ce qui est aussi injective.\\
 D'autre part, par le point (2) ci-dessus, on a vu que, pour chaque sous-groupe ouvert compact $K_1$ de $G_1$, $\gamma$ induit une bijection entre $\big( \Res_{G_1}^G(c-\Ind_H^G W)\big)^{K_1}$ et  $\big(c-\Ind_{H\cap G_1}^{G_1} W\big)^{K_1}$, d'où le résultat.
 \end{proof}
 \section{Appendice II. Les représentations admissibles des groupes des similitudes}\label{appendice2}
 Dans cette sous-section, nous voulons discuter la condition admissible pour les représentations des groupes de similitudes.\\

 On désigera par $F$ un corps local non-archimédien de caractéristique réduelle impaire $p$, et $A$ un groupe abélien d'ordre fini $n$, satisfaisant à $2|n$, et $(p,n)=1$.\\

 D'abords, nous rappelons des lemmes de Moore en homologie et montrons quelque résultats en scindage.
\\
Pour le corps $F^{\times}$, on désigne par  $\mathfrak{O}$ l'anneau d'entiers, $\mathfrak{P}$ l'idéal maximal de $\mathfrak{O}$, $U_n=\{ u \in F^{\times} | u\equiv 1 \mod \mathfrak{P}^{n} \}$, $U=U_0$,  $k_{F}$ sons corps résiduelle d'ordre $q=p^l$ avec $p\neq 2$,  $U_0/{U_1} \simeq k^{\times}$ un groupe cyclique d'ordre $q-1$. De plus, on peut prendre un sous-groupe de $S$ de $U$ tel que $U \simeq U_1 \times S$ [cf. \cite{Mo}, Page 20].
\begin{lemme}\label{tidtytyt}
On a l'isomorphisme naturel
 $$H^2(F^{\times}, A) \simeq \Hom(S, A).$$
\end{lemme}
\begin{proof}
Ceci découle de [\cite{Mo}, Page 20]. D'abord, $F^{\times}\simeq U \times \Z$, par la suite spectrale de Leray, on obtient
$$H^2(F^{\times}, A) \simeq H^2(\Z, A) \oplus H^1 (\Z, H^1 (U, A)) \oplus H^2(U, A).$$
Le premier terme est zéro et $H^1(\Z, H^1(U, A)) \simeq \Hom(U, A)$. Comme $U \simeq S \times U_1$, on a $\Hom(U,A) \simeq \Hom(S,A)$. En utilisant la suite de spectrale de Leray encore une fois, on obtient
$$H^2(U, A) \simeq H^2(S, A) \oplus H^1(S, H^1(U_1, A)) \oplus H^2(U_1, A).$$
Comme $S$ est cyclique, on a $H^2(S, A)=0$ et $ H^1(S, H^1(U_1, A)) \simeq \Hom(S, \Hom(U_1, A)) =0$. De plus $H^2(U_1,A)$ est un groupe de  $p$-torsion et de $n$-torsion, donc zéro. Finalement, on trouve le résultat.
\end{proof}
\begin{lemme}\label{scindesurF8}
Pour le sous-groupe $F^{\times n}$ de $F^{\times}$, l'application $H^2(F^{\times}, A) \longrightarrow H^2(F^{\times n}, A)$ est nulle.
\end{lemme}
\begin{proof}
C'est une consequence du Lemme \ref{tidtytyt}.
\end{proof}

Rappels:
Soient $D$ un corps, muni d'une involution $\tau$, $F$ le corps commutatif formé des points fixés de $\tau$, $W$ un espace vectoriel de dimension finie sur $D$, muni d'un produit $\epsilon$-hermitien $\langle, \rangle$( où $\epsilon=\pm 1$). On désigne par $U(W)$ le groupe unitaire de $(W, \langle, \rangle)$ et par $GU(W)$ le groupe de similitudes unitaire de $(W, \langle, \rangle)$.
 \  \\
On a une suite exacte
$$1 \longrightarrow U(W) \longrightarrow GU(W) \stackrel{\lambda}{\longrightarrow} \Lambda_{GU(W)} \longrightarrow 1.$$
Il  en résulte qu'on a des longues suites exactes
$$ \cdots \longrightarrow H^2(\Lambda_{GU(W)}, A) \longrightarrow H^2(GU(W), A) \stackrel{\lambda^2}{\longrightarrow} H^2(U(W), A) \longrightarrow  \cdots$$
Prenons un élément $[c]$ de $H^2(GU(W), A)$. Associons $[c]$( resp.  $\lambda^2([c])$) le groupe $\widetilde{GU}^A(W)$ (resp.  $\widetilde{U}^A(W)$). On a le diagramme commutatif suivant
\[
\begin{array}{ccccccccccc}
1 & \longrightarrow  &  A            & \longrightarrow     & \widetilde{U}^A(W)  & \longrightarrow & U(W)       & \longrightarrow  & 1       \\
  &                  &  \parallel    &                     &  \downarrow      &                 & \downarrow &                  &          \\
1 & \longrightarrow  &      A        & \longrightarrow     & \widetilde{GU}^A(W) & \longrightarrow & GU(W)      & \longrightarrow  & 1
\end{array}
\]
Par le lemme du serpent, on obtient le diagramme commutatif:
\[
\begin{array}{ccccccccccc}
  &                  &    1          &                     &      1           &                 &   1        &                  &    \\
  &                  &   \downarrow  &                     & \downarrow       &                 &  \downarrow&                  &     \\
1 & \longrightarrow  &  A        & \longrightarrow     & \widetilde{U}^A(W)  & \longrightarrow & U(W)       & \longrightarrow  & 1       \\
  &                  &  \parallel    &                     &  \downarrow      &                 & \downarrow &                  &          \\
1 & \longrightarrow  & A         & \longrightarrow     & \widetilde{GU}^A(W) & \longrightarrow & GU(W)      & \longrightarrow  & 1     \\
  &                  &  \downarrow   &                     & \downarrow       &                 & \downarrow &                  &       \\
1 & \longrightarrow  &     1       &\longrightarrow &\Lambda_{\widetilde{GU}^A(W)}    &  =         &\Lambda_{GU(W)}& \longrightarrow & 1  \\
  &                  &   \downarrow  &                     & \downarrow       &                 &  \downarrow&                  &     \\
 &                  &    1          &                     &      1           &                 &   1        &                  &
\end{array}
\]

\begin{lemme}\label{scindagedecorpsF}
Gr\^ace au monomorphisme $F^{\times} \hookrightarrow GU(W)$, on peut regarder $F^{\times}$ comme un sous-groupe de $GU(W)$, alors la suite exacte
$$1 \longrightarrow A \longrightarrow \widetilde{GU}^A(W) \longrightarrow GU(W) \longrightarrow 1 $$ est scindée au-dessus de $F^{\times n}$.
\end{lemme}
\begin{proof}
On note $\widetilde{F^{\times} U}^A(W)$ l'image réciproque de $F^{\times}U(W)$ dans $\widetilde{GU}^A(W)$.
Il suffit de montrer que chaque suite exacte
$$1 \longrightarrow A \longrightarrow  \widetilde{F^{\times} U}^A(W) \longrightarrow F^{\times}U(W) \longrightarrow 1$$
est scindée au-dessus de $F^{\times n}$. Comme $F^{\times}U(W) \simeq F^{\times } \times PU(W)$, où $PU(W)= U(W)/\{\pm 1\}$. Par la suite spectrale de Leray, on obtient
$$H^2\Big(F^{\times}PU(W), A\Big) \simeq H^2(PU(W), A) \oplus H^1(PU(W), H^1(F^{\times}, A)) \oplus H^2(F^{\times}, A)$$
$$ \simeq H^2(PU(W), A) \oplus \Big( \Hom(PU(W),  \Hom(F^{\times}, A)) \oplus \Hom(S, A)\Big),$$
où $S$ est le sous-groupe de $F^{\times}$ défini en Lemme \ref{tidtytyt}. Donc  l'application $H^2(F^{\times}U(W), A)\longrightarrow  H^2(F^{\times n}, A)$  est nulle.
\end{proof}
\begin{lemme}\label{FncommutateaUW}
D'après le Lemme \ref{scindagedecorpsF} ci-dessus, considérons $F^{\times n}$ comme un sous-groupe de $\widetilde{GU}^A(W)$, alors $ F^{\times n}$ commute à $\widetilde{U}^A(W)$.
\end{lemme}
\begin{proof}
Par la suite spectrale de Leray ci-dessus, on a vu que la restriction  du cocycle de $[c]$ à $F^{\times n} \times PU(W)$ est triviale, ceci implique le résultat.
\end{proof}
\begin{thm}
Soient $\widetilde{\pi}\in \Irr(\widetilde{GU}^A(W))$, $\widetilde{\sigma}\in \Irr(\widetilde{U}^A(W))$. Alors $\widetilde{\pi}, \widetilde{\sigma}$ sont admissibles.
\end{thm}
\begin{proof}
Ceci découle de [\cite{BernD}, Page 17, et Pages 25-32]
\end{proof}

\begin{thm}\label{restrictionadmissible}
Si $\widetilde{\pi}\in \Irr(\widetilde{GU}(W))$, alors $\widetilde{\pi}|_{\widetilde{U}(W)}$ est admissible.
\end{thm}
\begin{proof}
  D'abord $\widetilde{\pi}$ est admissible.  Par la théorie de nombre[cf. \cite{Neu}, Page 142, Corollary], $F^{\times}/{F^{\times 2n}}$ est un groupe abélien d'ordre fini. Comme $\widetilde{GU}^A(W)/{F^{\times n} \widetilde{U}^A(W)} \simeq F^{\times}/{F^{\times 2n}}$, on sait que  $\widetilde{\pi}|_{F^{\times n}\widetilde{U}^A(W)}$ est admissible, d'où le résultat.
 \end{proof}


\begin{thebibliography}{99}
\labelwidth=4em
\addtolength\leftskip{25pt}
\setlength\labelsep{0pt}
\addtolength\parskip{\smallskipamount}


\bibitem[Bar]{Bar2} L.BARTHEL,
 {\it Correspondance de Howe entre groupes de similitudes, thesis, University of Paris 7
(1989).}


\bibitem[B1]{BernD} J. BERNSTEIN,
 {\it  rédigé par P. Deligne, Le centre de Bernstein}, in ”Représentations des groupe
réductifs sur un corps local, Travaux en cours,” Hermann, Paris, 1984.

\bibitem[B2]{Bern} J. BERNSTEIN,
 {\it P-invariant distributions on GL(n) and the classification of unitary representations of GL(n) (non-Archimedean case)}, Lecture Notes in Math 943,  Springer-Verlag, 50-102 (1984).

\bibitem[B3]{Bern3} I.N. BERNSHTEIN,
 {\it All reductive $p$-adic groups are of type I}, Functional Anal. Appli $8$ (1974) 91-93.

\bibitem[BZ]{BernZ} I.N. BERNSTEIN, A.V.ZELEVINSKY,
 {\it Representations of the group $GL(n,F)$ where $F$ is a non-archimedean local field}, Russ.Math.Surv.31(3). 1-68(1976).

\bibitem[BH]{BushH} C.J.BUSHNELL, G.HENNIART,
 {\it The local langlands conjecture for $GL(2)$},
 Grundlehren der Mathematischen Wissenschaften, 335. Springer-Verlag, Berlin, 2006.

\bibitem[GT]{GT} W.T.GAN and W.TANTONO,
 {\it The local Langlands conjecture for GSp(4) II: the case of inner forms, preprint.} 



\bibitem[HM]{HanM} M. HANZER, G. MUI\`C,
{\it Parabolic induction and Jacquet functors for metaplectic groups},
  J.Algebra 323 (2010), 241–260.

\bibitem[HK]{HK} M.HARRIS, S.S.KUDLA,
{\it Arithmetic automorphic forms for the non-holomorphic discrete series of GSp(2)},
 Duke Math. J. 66 (1992), 59-121.


\bibitem[H]{Ho} R. HOWE,
{\it $\theta$-series and invariant theory, in automorphic forms, representations and $L$-functions},
  Proc. Symp. in Pure Math. XXXIII,AMS 1979, 275-286.


 \bibitem[KMRT]{KMRT} M-A. KNUS, A. A. MERKURJEV, M. ROST, and J-P. TIGNOL,
 {\it    The Book of Involutions}, Number 44 in American Mathematical Society Colloquium Publications. AmericanMathematical Society,
Providence, R.I., 1998. With a preface in French by J. Tits.


\bibitem[Kud]{Kud2} S. KUDLA,
 {\it  Notes on the local theta correspondence( lectures at the European School in Group Theory)}, preprint, available at http://www.math.utotonto.ca/~skudla/castle.pdf, 1996.


\bibitem[Mi]{Mi} A. MINGUEZ,
 {\it Correspondance de Howe explicite: paires duales de type II},
Ann. Scient. Ec. Norm. Sup.41, f.5, (2008), 715-739.

\bibitem[MVW]{MVW} C.MOEGLIN, M.F.VIGNERAS, J-L. WALDSPURGER,
{\it Correspondances de Howe sur un corps p-adique},
Lecture Notes in Math. Vol 1921, Springer-Verlag, New York, 1987.

\bibitem[Mo]{Mo} C.C. MOORE,
 {\it Group extensions of $p$-adic and adelic linear groups},
Publ. IHES 35 (1968), 5670.


\bibitem[N]{Neu} J.NEUKIRCH,
{\it Algebraic number theory},
Grundlehren der Mathematischen Wissenschaften, vol.322. Springer Berlin (1999).



\bibitem[PS]{PS} PIATESHI-SHAPIRO,I.I,
{\it Special automorphic forms on $PGSp_4$},
Arithmetic and Geometry, Vol.I, 309-325, Progr. Math,35, Birh\"auser Boston, Bostonn, MA, 1983.



\bibitem[Rena]{Rena} D.RENARD,
 {\it Représentations des groupes réductifs p-adiques}, Cours
Spécialisés 17, Société Math. de France 2010.

\bibitem[R]{Rob1} B.ROBERTS,
{\it The theta correspondence for similitudes},
Israel J. of Math. 94 (1996), 285-317.


\bibitem[Sc]{Scha} W.SCHARLAU,
{\it Quadratic and Hermitiens forms},
Springer-Verlag Grundlehren der mathematischen Wissenschaften 270 [1985].


\bibitem[Sh]{Shim} H.SHIMIZU,
{\it Theta-series and automorphic forms on $GL_2$},
J. Math.Soc.Japan 24 (1972), 638-683.


\bibitem[Wa1]{Wald2} J-L. WALDSPURGER,
 {\it Correspondance de Shimura},
 J. Math. Pures Appl.(9) 59 (1980), no.1, 1-132.


\bibitem[Wa2]{Wald} J-L. WALDSPURGER,
 {\it  Démonstration d'une conjecture de dualité de Howe dans le cas p-adique  $p \neq 2$},
 in Israel Math. Conf. Proc. vol.2 (1990), 267-324.

 \bibitem[Wang]{thesiswang} C-H. WANG,
 {\it Thèse, Université Paris-sud 11}.

\bibitem[W]{Weil} A.WEIL,
 {\it  Sur certains groupes d'opérateurs unitaires},
 Acta Mathematica 111 (1964), 143-211.

\end{thebibliography}
\end{document}